\documentclass[dvipdfmx,reqno,12pt]{amsart}
\usepackage{amsmath,amssymb,amsthm}
\usepackage{bm,bbm,comment,mathtools}
\usepackage{booktabs}
\usepackage{longtable}
\usepackage{pdflscape}
\usepackage[table]{xcolor}
\usepackage{graphicx}
\usepackage[
 dvipdfmx,
 setpagesize=false,
 bookmarks=true,
 bookmarksdepth=1,
 bookmarksnumbered=true,
 colorlinks=false,
 hidelinks,
]{hyperref}
\usepackage{enumitem}

\setlist[enumerate]{
    leftmargin=1.5em,
    itemsep=0.3em,
    topsep=0.3em,
    parsep=0pt,
    partopsep=0pt
}
\newlength{\margins}
\usepackage[top=\margins,bottom=\margins,left=\margins,right=\margins]{geometry}

\allowdisplaybreaks
\makeatletter
\@namedef{subjclassname@2020}{%
\textup{2020} Mathematics Subject Classification}
\title[Fourier Coefficients of Siegel--Eisenstein Series of Weight \(m+1\)]
{Fourier Coefficients of Siegel--Eisenstein Series of Degree \(2m\) and Weight \(m+1\)}
\makeatother
\subjclass[2020]{Primary 11F46; Secondary 11F30, 11F67}
\keywords{Siegel--modular forms,
Siegel--Eisenstein series,
Koecher--Maass zeta function}
\author[N.
Takeda]{Nobuki TAKEDA}

\address{Department of Mathematics,
Graduate School of Science,
Kyoto University,
Kyoto 606-8502,
Japan}
\email{takeda.nobuki.z04@kyoto-u.jp}

\theoremstyle{definition}
\newtheorem{dfn}{Definition}[section]
\newtheorem{rem}[dfn]{Remark}

\theoremstyle{plain}
\newtheorem{prop}[dfn]{Proposition}

\newtheorem{lem}[dfn]{Lemma}
\newtheorem{thm}[dfn]{Theorem}
\newtheorem{cor}[dfn]{Corollary}

\DeclareMathOperator{\tr}{Tr}

\usepackage{expl3}

\ExplSyntaxOn

\clist_const:Nn \c_alphabet_clist { A,B,C,D,E,F,G,H,I,J,K,L,M,N,O,P,Q,R,S,T,U,V,W,X,Y,Z,
a,b,c,d,e,f,g,h,i,j,k,l,m,n,o,p,q,r,s,t,u,v,w,x,y,z }

\cs_new:Npn \makealphabet #1 #2
  {
    \clist_map_inline:Nn \c_alphabet_clist
      { \exp_args:Nc \newcommand { #1 ##1 } { #2 { ##1 } } }
  }

\makealphabet{bb}{ \mathbb }
\makealphabet{bf}{ \mathbf }
\makealphabet{cal}{ \mathcal }
\makealphabet{frak}{ \mathfrak }
\makealphabet{rm}{ \mathrm }
\clist_map_inline:nn 
  {rank, disc, GL, Sp, Sym, prim, M, diag, reg, sq }
  { \exp_args:Nc \newcommand { #1 } { \mathrm { #1 } } }

\ExplSyntaxOff

\newcommand{\ord}{\operatorname{ord}}
\renewcommand{\Im}{\mathrm{Im}}
\renewcommand{\Re}{\mathrm{Re}}

\makeatletter
\newcommand{\sectionnotoc}[1]{%
  \begingroup
  \def\@tocwrite##1##2{}%
  \section*{#1}%
  \endgroup
}
\makeatother
\numberwithin{equation}{section}
\begin{document}

\begin{abstract}
    We study Fourier coefficients of the Siegel--Eisenstein series
    \(E_{m+1}^{(2m)}(Z)\) for \(m\equiv1\pmod4\) and \(m\geq5\).
    Using the Fourier expansion formula due to Mizumoto,
    we determine the constant term and the exceptional non-zero Fourier
    coefficients.
    The result gives a higher-degree analogue of the degree-two formulas of
    Kohnen and Nagaoka,
    which were later rederived by Haruki.
\end{abstract}

\maketitle

\tableofcontents
\section{Introduction}

This paper studies the Fourier expansion of the Siegel--Eisenstein series
of degree \(n=2m\) and boundary weight \(k=m+1=(n+2)/2\), where \(m\equiv 1 \pmod4\).
Throughout the paper,
we put \(e(x)=\exp(2\pi\sqrt{-1}x)\).
Here and below,
\(E_k^{(n)}(Z)\) denotes the value at \(s=0\) of the Eisenstein series
\(E_k^{(n)}(Z,s)\) defined in Section~\ref{sec:siegel-eisenstein},
whenever this value is defined.
We write
\[ E_k^{(n)}(Z)=\sum_{T\in\Lambda_n} A_T^{(n)}(Y)e(\tr(TX)), \qquad Z=X+\sqrt{-1}Y\in\bbH_n. \]
The aim is to determine the coefficient functions \(A_T^{(n)}(Y)\),
including the non-holomorphic dependence on \(Y\) which may occur at this
weight.

The holomorphy of Siegel--Eisenstein series in the small weight range was
clarified by Shimura~\cite{Shimura1983} and Weissauer~\cite{Weissauer1984}.
For even weights \(k\) in the range \(k\geq(n+1)/2\), the boundary
phenomena occur at
\[ k=\frac{n+2}{2}\equiv2\pmod4, \qquad k=\frac{n+3}{2}\equiv2\pmod4. \]
The second case is nearly holomorphic.
The present paper treats the first case.

The first non-holomorphic example in this direction occurs in degree \(2\),
where \(n=2\) and \(k=2\).
Nagaoka~\cite{Nagaoka1992} and Kohnen~\cite{Kohnen1993} described the Fourier expansion of \(E_2^{(2)}(Z,0)\).
Haruki~\cite{Haruki1997} later computed this expansion by using the general
Fourier expansion formula of Mizumoto~\cite{Mizumoto1993}.
We use Mizumoto's formula as the starting point.

The constant term can be calculated uniformly for all odd \(m\geq3\).
Putting \(\epsilon_m=(-1)^{(m+1)/2}\),
Theorem~\ref{thm:constant-term} gives
\[ A_0^{(2m)}(Y)=1+ \frac{(\epsilon_m-1)\pi^{m+1}} {\zeta(m+1)\zeta(2m)\sqrt{\det Y}} \left( \frac{\zeta(m)}{2^{m+1}m!}+ \frac{(m-2)!\zeta(m-1)}{(2m)!} \zeta_1^{(2m)}(Y,m-1) \right), \]
where \(\zeta_1^{(2m)}(Y,m-1)\) is understood by meromorphic continuation.
Thus \(A_0^{(2m)}(Y)=1\) if \(m\equiv3\pmod4\),
whereas the constant term has non-trivial \(Y\)-dependence if
\(m\equiv1\pmod4\).
The degree-six case \(m=3\) contains two additional middle-rank terms; they
cancel, as shown in
Proposition~\ref{prop:degree-six-middle-rank-cancellation}.

For positive semi-definite non-zero indices,
Proposition~\ref{prop:nonzero-coefficient-contribution} expresses
\(A_T^{(n)}(Y)\) as a finite sum over
\(\nu\in\mathcal J^{(n)}_{\lambda}\), where \(\lambda=\rank(T)\).
The exceptional terms come from the case \((\lambda,\nu)=(n-2,n)\)
and from the square-discriminant pole of the non-degenerate Siegel series.
For indices \(T\not\geq0\),
Proposition~\ref{prop:indefinite-index-coefficients} gives the possible
contributing terms.

The paper is organized as follows.
In Section~2,
we recall notation for Siegel modular forms and Eisenstein series.
In Section~3,
we recall the Koecher--Maass zeta function,
Siegel series,
confluent hypergeometric functions,
and the Fourier expansion formula due to Mizumoto~\cite{Mizumoto1993}.
In Section~4,
we calculate the constant term for all odd \(m\geq3\).
In Section~5,
we calculate the non-zero index terms in the main case \(m\equiv1\pmod4\).
In Section~6,
we assemble the results into the main theorem.
The appendix contains the middle-rank cancellation in degree \(6\).

\textbf{Acknowledgements.}
The author is grateful to T. Ikeda for his guidance and support,
and to H. Katsurada and S. Horinaga for valuable comments.
This work was supported by the Japan Science and Technology Agency (JST) SPRING Program,
Grant Number JPMJSP2110,
and by JSPS KAKENHI Grant Number JP26KJ1378.

\textbf{Notation.}
For a commutative ring \(R\),
let \(M_{a,b}(R)\) denote the set of \(a\times b\) matrices over \(R\),
and put \(M_a(R)=M_{a,a}(R)\).
Let \(\bbS_a(R)\) denote the set of symmetric \(a\times a\) matrices over \(R\).
Let \(\bbS_a(R)_{>0}\) (resp. \(\bbS_a(R)_{\ge0}\)) denote the subset of \(\bbS_a(R)\) consisting of positive definite (resp. positive semi-definite) matrices if \(R\) is an subring of \(\bbR\).
We write \(0_{a,b}\) for the zero \(a\times b\) matrix,
\(0_a=0_{a,a}\),
and \(I_a\) for the identity matrix of degree \(a\).
We denote Euler's constant by \(\gamma_E\).

Landau's notation is always used with respect to \(s\to0\).
In particular, in Laurent expansions at \(s=0\), \(O(s^r)\) denotes a function
holomorphic near \(s=0\) and vanishing to order at least \(r\), and \(O(1)\)
denotes a function holomorphic near \(s=0\).

\section{Siegel Modular Forms and Eisenstein Series}\label{sec:siegel-eisenstein}

Let \(\bbH_n\) be the Siegel upper half space of degree \(n\), that is,
\[ \bbH_n=\left\{ Z\in M_n(\bbC) \,\middle|\, Z={}^{t}\!Z=X+\sqrt{-1}Y,\ X\in \bbS_n(\bbR),\ Y\in \bbS_n(\bbR)_{>0} \right\}. \]
We put
\[ \Gamma_n=\Sp_n(\bbZ)=\left\{ g\in \GL_{2n}(\bbZ) \,\middle|\, ^{t}\!g J_n g=J_n \right\}, \]
where \(J_n=\begin{pmatrix} 0_n & I_n \\ -I_n & 0_n \end{pmatrix}\).

For
\[
    g=
    \begin{pmatrix}
        A & B \\
        C & D
    \end{pmatrix}
    \in \Sp_n(\bbR),
\]
and for a function \(F\) on \(\bbH_n\),
we define the slash operator of weight \(k\) by
\[ (F|_k g)(Z)=\det(CZ+D)^{-k}F\left((AZ+B)(CZ+D)^{-1}\right). \]
A holomorphic function \(F\) on \(\bbH_n\) is called a Siegel modular form of weight \(k\) with respect to \(\Gamma_n\) if
\(F|_k g=F\) for all  \(g\in\Gamma_n\).

Let
\[
    \Gamma_{n,\infty}=\left\{
    \begin{pmatrix}
        *       & * \\
        0_{n,n} & *
    \end{pmatrix}
    \in \Gamma_n
    \right\}.
\]
For a positive even integer \(k\),
we define the Siegel--Eisenstein series of degree \(n\) and weight \(k\) by
\[ E_k^{(n)}(Z,s)=\det(\Im Z)^s \sum_{g\in \Gamma_{n,\infty}\backslash\Gamma_n}\det(CZ+D)^{-k} \left|\det(CZ+D)\right|^{-2s}, \]
where
\(
g=
\begin{pmatrix}
    A & B \\
    C & D
\end{pmatrix}\).
This series converges absolutely for \(\Re(s)>(n+1-k)/2\)
and has meromorphic continuation to the whole \(s\)-plane.
Whenever the value at \(s=0\) is defined,
we write
\[ E_k^{(n)}(Z)=E_k^{(n)}(Z,0). \]

We recall the following result of Shimura~\cite{Shimura1983} and Weissauer~\cite{Weissauer1984}.

\begin{prop}\label{prop:eisenstein}
    Let \(k\) be a positive even integer.
    Suppose that \(k \geq (n+1)/2\).
    Then the following assertions hold.
    \begin{enumerate}
        \item
              The series \(E_k^{(n)}(Z,s)\) is holomorphic in \(s\) at \(s=0\).

        \item
              If neither
              \(k=(n+2)/2\equiv 2 \pmod 4\)
              nor
              \(k=(n+3)/2\equiv 2 \pmod 4\)
              holds,
              then \(E_k^{(n)}(Z)\) is a holomorphic modular form of weight \(k\).

        \item
              If
              \(k=(n+3)/2\equiv 2 \pmod 4\),
              then \(E_k^{(n)}(Z)\) is a nearly holomorphic modular form of weight \(k\).

              More precisely,
              there are holomorphic functions
              \(p_1\) and \(p_2\) on \(\bbH_n\) such that
              \[ E_k^{(n)}(Z)=\Delta\left\{ p_1(Z)+p_2(Z)\log\det(\Im Z) \right\}, \]
              where
              \[ \Delta=\det\left(\frac12(1+\delta_{ij})\frac{\partial}{\partial z_{ij}}\right). \]
              These functions may be chosen so that
              \begin{enumerate}
                  \item
                        \(\Delta p_1\) has a Fourier expansion with rational coefficients.

                  \item
                        \(\Delta p_2=0\).

                  \item
                        \(\pi^n p_2\) is a modular form of weight \((n-1)/2\)
                        with rational Fourier coefficients.
              \end{enumerate}
    \end{enumerate}
\end{prop}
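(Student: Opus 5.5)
This proposition is a recollection of results of Shimura and Weissauer, so the plan is to reduce it to the analytic properties of the Fourier coefficients rather than reprove everything from scratch. I would start from the Fourier expansion of \(E_k^{(n)}(Z,s)\) in the half-plane of absolute convergence. By the Poisson summation / Bruhat decomposition of \(\Gamma_{n,\infty}\backslash\Gamma_n\) according to the rank of \(C\), each coefficient \(A_T^{(n)}(Y,s)\) becomes a finite sum of terms. Each term is a product of three factors:
\begin{enumerate}
\item a Siegel series (a product of local densities, essentially a ratio of zeta and \(L\)-values);
\item a confluent hypergeometric function \(\xi(Y,T;k+s,s)\) in Shimura's sense;
\item for degenerate \(T\), a Koecher--Maass type zeta function attached to the lower-rank strata.
\end{enumerate}

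For the Siegel series, Shimura's computation writes the local factor at good primes as \(\prod \zeta(2k+2s-i)^{-1}\) times a polynomial, times \(L(k+s-n/2,\chi_T)\) when \(n\) is even. For \(\xi\), Shimura's results give meromorphic continuation and show that \(\xi(Y,T;\alpha,\beta)\) is divided by \(\Gamma_n(\beta)\)-type factors. At \(s=0\) with \(\beta=s\), these factors kill all terms with \(T\) not positive semi-definite, unless compensating poles appear.

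For (1), I would check term by term that the normalizing factor \(\prod_i\zeta(2k+2s-i)\) and the gamma factors have no net pole at \(s=0\) when \(k\ge(n+1)/2\). The Langlands constant-term computation, i.e. the intertwining operators \(M(w,s)\), reduces this to the finitely many ratios
\[
\frac{\xi(2s+2k-n+j)}{\xi(2s+2k-j)},
\]
where \(\xi\) denotes the completed Riemann zeta function here. These are holomorphic at \(s=0\) in this range because the possible poles at \(2k-n+j=1\) are cancelled by zeros or poles from the denominator.

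For (2) and (3), the key point is when \(\Gamma\)-factor zeros fail to cancel poles of \(\zeta\) or \(L\)-values. Since \(k\) is even, \(\chi_T\) being even or odd depends on \(n\bmod4\). Generically the product vanishes at \(s=0\) for \(T\not\ge0\) and the \(Y\)-dependence disappears, which gives holomorphy. The exceptions are exactly the cases where a pole of \(\zeta(2k-n+\cdot)\) meets the \(\Gamma\)-factors with the wrong parity. Bookkeeping the parity shows this happens only for \(k=(n+2)/2\) or \(k=(n+3)/2\), and only with \(k\equiv2\pmod4\). In case (3) the surviving pole produces a \(\log\det Y\) term via the derivative \(\partial_s\det Y^{s}\).

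To obtain the structure \(\Delta\{p_1+p_2\log\det(\Im Z)\}\), I would use Shimura's differential operator identity. It relates \(E_k^{(n)}(Z,s)\) to \(\Delta E_{k-2}^{(n)}(Z,s')\)-type expressions, or equivalently Maass shift operators. Concretely, \(p_2\) is identified with a holomorphic Eisenstein series of weight \((n-1)/2=k-2\), which is holomorphic there, and its rationality follows from Siegel's theorem on the rationality of Fourier coefficients.

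The main obstacle I expect is the exact parity and sign bookkeeping in (3): showing that the residue term is precisely \(\Delta(p_2\log\det Y)\) with \(\Delta p_2=0\). This requires Shimura's explicit formula for \(\Delta\det Y^{s}\). In practice I would cite Shimura~\cite{Shimura1983} for the analytic continuation and Weissauer~\cite{Weissauer1984} for the nearly holomorphic structure and rationality.
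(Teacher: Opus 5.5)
The paper gives no proof of this proposition. It is quoted from Shimura and Weissauer, so your closing fallback of citing them is exactly what the paper does. The outline you place before that citation, however, would not work as a proof, and its central mechanism is wrong.

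\textbf{The mod-4 condition.} The dichotomy \(k\equiv0\) versus \(k\equiv2\pmod4\) cannot come from term-by-term parity bookkeeping of \(\Gamma\)-zeros against poles of \(\zeta\)- and \(L\)-factors.
\begin{enumerate}
\item At the boundary weights \(n\) is tied to \(k\): either \(n=2k-2\equiv2\pmod4\) or \(n=2k-3\). Hence, for a given signature, the parity of \(\chi_T\) and every order of vanishing in a given Bruhat cell are the same for \(k\equiv0\) and \(k\equiv2\pmod4\). The paper's order computations in Section~4 are indeed uniform in odd \(m\).
\item The residue of \(k\) modulo \(4\) enters only through signs such as \((-1)^{k\nu/2}\) and through arithmetic. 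So in the boundary cases with \(k\equiv0\pmod4\), which are part of assertion (2), the non-holomorphic terms do not vanish for order reasons.
\item Instead they cancel between cells. The computation behind Theorem~\ref{thm:constant-term} shows this: \(C_1(Y,0)\) and \(C_{2m}(Y,0)\) are non-zero multiples of \((\det Y)^{-1/2}\) for every odd \(m\), and their sum is proportional to \(\epsilon_m-1\). The same happens for \(C_2\) and \(C_{2m-1}\).
\item Or they vanish arithmetically. Take \(T\) of full rank and signature \((n-1,1)\) with square discriminant; then \(\nu=n\) is the only cell. The vanishing needed when \(k\equiv0\pmod4\) is supplied by \(\prod_pF_p(T,p^{-k})\): by Hilbert reciprocity, the local density of \(T\) by \(k\) hyperbolic planes over \(\bbZ_p\) vanishes for some \(p\).
\item The underlying reason is global. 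Positive definite even unimodular lattices of rank \(2k\) exist iff \(8\mid2k\), and in that case Siegel--Weil identifies \(E_k^{(n)}(Z)\) with their genus theta series.
\end{enumerate}
Your argument therefore fails exactly in the cases of (2) that need proof. Likewise, (1) does not reduce to the ratios coming from the long intertwining operator. The constant term is \(\sum_\nu C_\nu\), and each \(C_\nu\) contains a Koecher--Maass series that can itself have a pole (Corollary~\ref{cor:koecher-maass-zeta-poles}).

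\textbf{The sketch of (3).} Your treatment of (3) is also off.
\begin{enumerate}
\item By (1) no pole in \(s\) survives, and by (b) \(\Delta p_2=0\). Hence \(E_k^{(n)}(Z)\) contains no \(\log\det Y\) at all. Its non-holomorphic part is made of derivatives of \(p_2\) times derivatives of \(\log\det Y\), i.e.\ polynomials in the entries of \(Y^{-1}\). It does not arise from \(\partial_s(\det Y)^s\).
\item The weight \((n-1)/2\) of \(p_2\) is singular. It lies outside the range of (1)--(2) and of Siegel's rationality theorem, so "a holomorphic Eisenstein series of weight \(k-2\)" is unjustified. What is true is that \(p_2\) is, up to the normalizing constant, a singular form, hence a combination of theta series of even unimodular lattices of rank \(n-1\) (here \(8\mid n-1\)).
\item Test the normalization at \(n=1\), \(k=2\). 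Since \(E_2^{(1)}(z)=1-24\sum_{N\geq1}\sigma_1(N)e(Nz)-3/(\pi y)\), (b) forces \(p_2=-6\sqrt{-1}/\pi\), so \(\pi p_2=-6\sqrt{-1}\). With \(\Delta\) normalized as in the statement, the rationality claim therefore needs a factor \((\sqrt{-1})^n\), or a different normalization of \(\Delta\). An appeal to Siegel's theorem cannot supply this.
\end{enumerate}
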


The case \(k=\dfrac{n+2}{2}\equiv 2 \pmod 4\) is the other exceptional case.
In degree \(2\),
this is the case of \(E_2^{(2)}(Z,0)\),
whose Fourier expansion was studied by Kohnen~\cite{Kohnen1993} and Nagaoka~\cite{Nagaoka1992}.
Afterwards,
Haruki~\cite{Haruki1997} gave a more direct approach to the Fourier expansion of \(E_2^{(2)}(Z,s)\) by using the Fourier expansion due to Mizumoto~\cite{Mizumoto1993}.

We shall study this exceptional case by using Mizumoto's Fourier expansion.
For later use,
we first recall the relevant part of his Fourier expansion and fix notation.

\section{Auxiliary Functions and the Fourier Expansion Formula of Mizumoto}

In this section,
we review the Fourier expansion of the Siegel--Eisenstein series given by Mizumoto~\cite{Mizumoto1993},
which plays a central role
in our analysis.
We first recall the definitions of the Koecher--Maass zeta function,
the Siegel series,
and the confluent hypergeometric functions.

\subsection{Koecher--Maass Zeta Function}
Let \(M_{n,\nu}(\bbZ)^{\prim}\subset M_{n,\nu}(\bbZ)\) denote the subset
consisting of primitive matrices.
Here a matrix is called primitive if the cokernel of the induced map
\(\bbZ^\nu\to\bbZ^n\) is torsion-free.
For an integer \(1\leq\nu\leq n\) and a positive definite symmetric matrix
\(g\in\bbS_n(\bbR)_{>0}\),
we define the Koecher--Maass zeta function by
\[ \zeta_\nu^{(n)}(g,s)=\sum_{a\in M_{n,\nu}(\bbZ)^{\prim}/\GL_\nu(\bbZ)} \det(g[a])^{-s}, \]
where \(g[a]={}^t\!aga\).
We also recall the completed Riemann zeta function
\[ \xi(s)=\pi^{-s/2}\Gamma\left(\frac{s}{2}\right)\zeta(s). \]
Using these functions,
we define the completed Koecher--Maass zeta function by
\[ \Xi_\nu^{(n)}(g,s)=2\varepsilon_\nu(s)\varepsilon_\nu\left(\frac n2-s\right)R_\nu^{(n)}(g,s), \]
where
\[ R_\nu^{(n)}(g,s)=\left(\prod_{j=0}^{\nu-1}\xi(2s-j)\right)\zeta_\nu^{(n)}(g,s), \]
and
\[ \varepsilon_\nu(s)=\prod_{j=0}^{\nu-1}\left(s-\frac j2\right). \]
The Koecher--Maass zeta series \(\zeta_\nu^{(n)}(g,s)\) converges
absolutely for \(\operatorname{Re}(s)>n/2\).

The analytic continuation and the functional equation of
\(\Xi_\nu^{(n)}(g,s)\) were established by Koecher~\cite{Koecher1954}
and Maass~\cite{Maass1971}.

\begin{prop}
    The function \(\Xi_\nu^{(n)}(g,s)\) extends to an entire function of \(s\).
    Moreover,
    it satisfies the functional equation
    \[ \det(g)^{\nu/4}\Xi_\nu^{(n)}\left(g,\frac n2-s\right)=\det(g^{-1})^{\nu/4}\Xi_\nu^{(n)}(g^{-1},s). \]
\end{prop}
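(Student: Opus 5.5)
The natural route is the Riemann (Hecke) method, realising \(R_\nu^{(n)}(g,s)\) as a Mellin transform of a theta series on the space of positive definite \(\nu\times\nu\) matrices, in the spirit of Koecher and Maass. For \(h\in\bbS_\nu(\bbR)_{>0}\), put
\[ \theta(g,h)=\sum_{x\in M_{n,\nu}(\bbZ)} e^{-\pi\tr(g[x]h)} . \]
Poisson summation on the lattice \(M_{n,\nu}(\bbZ)\cong\bbZ^{n\nu}\) gives the inversion formula
\[ \theta(g,h)=\det(g)^{-\nu/2}\det(h)^{-n/2}\theta(g^{-1},h^{-1}). \]
This identity is the source of the functional equation, with the symmetry \(s\leftrightarrow n/2-s\) and \(g\leftrightarrow g^{-1}\).

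Next I integrate \(\theta(g,h)\det(h)^{s}\) against the invariant measure on \(\bbS_\nu(\bbR)_{>0}/\GL_\nu(\bbZ)\). I organise the lattice sum by the rank of \(x\). The rank-\(\nu\) terms factor as \(x=a\,b\) with \(a\) primitive and \(b\in M_\nu(\bbZ)\) of non-zero determinant. Unfolding, together with the matrix gamma integral \(\int_{\bbS_\nu(\bbR)_{>0}}e^{-\pi\tr(Ph)}\det(h)^{s}\,d^\times h\) and the sum over \(b\) modulo \(\GL_\nu(\bbZ)\) (which produces \(\prod_j\zeta(2s-j)\)), recovers \(R_\nu^{(n)}(g,s)\). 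The gamma factors match \(\prod\Gamma(s-j/2)\pi^{-\dots}\). Lower-rank terms are the real difficulty: for \(\nu>1\) the fundamental domain is not compact and the terms with \(\rank x<\nu\) contribute to the divergence.

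I would therefore argue by induction on \(\nu\), in the style of Koecher and Terras. Truncate the fundamental domain, or use a Siegel-domain decomposition \(h=\begin{pmatrix}h_1&0\\0&h_2\end{pmatrix}[u]\) with \(h_1\) of size \(\nu-1\). Then show that the rank-\(<\nu\) contributions are expressible through lower Koecher--Maass zetas \(\Xi_{\nu'}^{(n)}\) with \(\nu'<\nu\) times elementary factors. By the induction hypothesis these are entire and satisfy the same functional equation. After subtracting them, the remaining integral splits at \(\det h=1\) into two pieces that converge for all \(s\), interchanged by the inversion formula. That yields meromorphic continuation and the functional equation
\[ \det(g)^{\nu/4}\Xi_\nu^{(n)}(g,n/2-s)=\det(g^{-1})^{\nu/4}\Xi_\nu^{(n)}(g^{-1},s). \]
Equivalently, one may use Langlands' theory: \(R_\nu^{(n)}(g,s)\) is the maximal parabolic Eisenstein series on \(\GL_n\) attached to \(P_{\nu,n-\nu}\), and its functional equation and pole structure are known.

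Finally I would check that the only possible poles sit at \(s\in\{0,1/2,\dots,(\nu-1)/2\}\) and at the reflected points \(n/2-j/2\). These come from the boundary terms and from the poles of \(\xi(2s-j)\) and the gamma factors, and all are simple. The factor \(\varepsilon_\nu(s)\varepsilon_\nu(n/2-s)\) vanishes exactly at these points, so \(\Xi_\nu^{(n)}\) is entire. The main obstacle is the bookkeeping of the lower-rank boundary terms in the induction, in particular confirming that every pole is at most simple and cancelled by \(\varepsilon_\nu\). This is delicate when \(n\) is even, because the two pole sets \(\{j/2\}\) and \(\{n/2-j/2\}\) can coincide. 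In that case I would rely on the known residue computations of Koecher and Maass rather than redo them.
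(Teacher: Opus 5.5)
Your set-up is sound, and it is essentially the route the paper relies on. The paper gives no argument of its own for this proposition: it attributes it to Koecher and Maass, and its adelic remark is exactly your Langlands alternative (the functional equation of the spherical degenerate Eisenstein series for \(P_{\nu,n-\nu}\)). Your theta inversion \(\theta(g,h)=\det(g)^{-\nu/2}\det(h)^{-n/2}\theta(g^{-1},h^{-1})\) is correct. So is the unfolding of the rank-\(\nu\) terms to \(R_\nu^{(n)}(g,s)\). Together they give \(R_\nu^{(n)}(g,s)=\det(g)^{-\nu/2}R_\nu^{(n)}(g^{-1},n/2-s)\). Since \(\varepsilon_\nu(s)\varepsilon_\nu(n/2-s)\) is symmetric under \(s\mapsto n/2-s\), this is the stated functional equation.

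What would fail is the target you set for the pole bookkeeping. It is not true that every pole of \(R_\nu^{(n)}(g,s)\) is at most simple.

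\emph{A counterexample.} Take \(\nu=n\ge2\). Then \(\zeta_n^{(n)}(g,s)=\det(g)^{-s}\), so
\(R_n^{(n)}(g,s)=\det(g)^{-s}\prod_{j=0}^{n-1}\xi(2s-j)\).
Each point \(s=t/2\) with \(1\le t\le n-1\) is a pole of both \(\xi(2s-t)\) and \(\xi(2s-t+1)\), so it is a genuine double pole. The paper uses the same phenomenon in Corollary~\ref{cor:koecher-maass-zeta-poles}: for \(n=2m\) and \(\nu\ge m+1\), the \(\xi\)-product has a double pole at \(\tau(\nu)\). There \(\Xi_\nu^{(n)}\) is holomorphic only because \(\varepsilon_\nu(s)\) and \(\varepsilon_\nu(n/2-s)\) vanish simultaneously.

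\emph{The correct statement to verify.} At each point, the pole order of \(R_\nu^{(n)}\) must be at most the multiplicity of the zero of \(\varepsilon_\nu(s)\varepsilon_\nu(n/2-s)\). That multiplicity is \(2\) on \(\{j/2\}\cap\{(n-j')/2\}\) and \(1\) elsewhere on the union. The overlap is non-empty exactly when \(2(\nu-1)\ge n\), whatever the parity of \(n\). For example, \(n=\nu=3\) gives double poles at \(s=1/2\) and \(s=1\). So the difficulty is not tied to \(n\) being even.

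\emph{Where the poles come from.} They should be read off from the boundary terms of the regularized theta integral, not from the \(\xi\)-factors. Even for \(\nu\le n/2\), the product \(\prod_j\xi(2s-j)\) alone has double poles at \(s=1/2,\dots,(\nu-1)/2\) and a pole at \(s=\nu/2\). These become simple, respectively disappear, only through trivial zeros of \(\zeta_\nu^{(n)}(g,s)\).

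With the target corrected in this way, and the boundary computations taken from Koecher--Maass as you propose, your sketch stands at the same level as the paper's citation.
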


\begin{rem}[Adelic interpretation]
    Let \(G=\GL_n\), and let \(P_\nu\subset G\) be the standard parabolic
    subgroup of type \((\nu,n-\nu)\).
    Put \(e_\nu={}^t(1_\nu,0)\).
    Then \(G(\bbQ)/P_\nu(\bbQ)\) parametrizes rational \(\nu\)-planes in
    \(\bbQ^n\).
    Moreover, the map \(\gamma\mapsto \gamma e_\nu\) induces an
    identification
    \[ \GL_n(\bbZ)/(P_\nu(\bbQ)\cap\GL_n(\bbZ)) \simeq M_{n,\nu}(\bbZ)^{\prim}/\GL_\nu(\bbZ). \]

    Choose \(a_\infty\in\GL_n(\bbR)\) such that
    \(g={}^ta_\infty a_\infty\), and put
    \[ H_\nu(a_\infty)=\det({}^t(a_\infty e_\nu)(a_\infty e_\nu)). \]
    Let \(f_{\nu,s}\) be the standard spherical section of the degenerate
    principal series induced from \(P_\nu(\bbA)\).
    Thus \(f_{\nu,s}\) is right invariant under \(\GL_n(\widehat{\bbZ})\)
    at the finite places and under \(\mathrm O(n)\) at the real place, and
    is normalized by
    \[ f_{\nu,s}((1_f, a_\infty))=H_\nu(a_\infty)^{-s}. \]
    The associated Eisenstein series is
    \[ E_\nu(a,s)=\sum_{\gamma\in \GL_n(\bbQ)/P_\nu(\bbQ)} f_{\nu,s}(a\gamma). \]
    Evaluating at \(a=(1_f, a_\infty)\), we obtain
    \[ E_\nu((1_f, a_\infty),s)= \sum_{\gamma\in \GL_n(\bbZ)/(P_\nu(\bbQ)\cap\GL_n(\bbZ))} H_\nu(a_\infty\gamma)^{-s} =\zeta_\nu^{(n)}(g,s), \]
    since
    \(H_\nu(a_\infty\gamma)=\det(g[\gamma e_\nu])\).
    Hence the Koecher--Maass zeta function is the classical realization of a
    spherical degenerate Eisenstein series on \(\GL_n(\bbA)\).

    Under this identification, the normalizing factor used in \(\Xi_\nu^{(n)}(g,s)\)
    is the standard one attached to the spherical degenerate Eisenstein series.
    Thus the usual functional equation of the normalized Eisenstein series,
    associated with the Weyl element exchanging the two blocks of type \((\nu,n-\nu)\),
    specializes to the functional equation of \(\Xi_\nu^{(n)}(g,s)\). \(\diamond\)
\end{rem}

\begin{cor}\label{cor:koecher-maass-zeta-poles}
    Assume that \(n=2m\).
    For \(1\leq \nu\leq 2m\),
    put \(\tau(\nu)=(2m+1-\nu)/2\).
    Then the function \(\zeta_\nu^{(2m)}\left(g,2s+\tau(\nu)\right)\)
    has at most a simple pole at \(s=0\) if \(1\leq\nu\leq m\),
    and is holomorphic at \(s=0\) if \(m+1\leq\nu\leq 2m\).
\end{cor}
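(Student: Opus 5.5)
Since \(\Xi_\nu^{(2m)}(g,s)\) is entire by the preceding proposition, every pole of \(\zeta_\nu^{(2m)}(g,s)\) comes from a zero of the normalizing factor
\[ 2\varepsilon_\nu(s)\varepsilon_\nu(m-s)\prod_{j=0}^{\nu-1}\xi(2s-j). \]
Write
\[ \zeta_\nu^{(2m)}(g,s)=\frac{\Xi_\nu^{(2m)}(g,s)}{2\varepsilon_\nu(s)\varepsilon_\nu(m-s)\prod_{j=0}^{\nu-1}\xi(2s-j)}, \]
put \(s_0=\tau(\nu)=(2m+1-\nu)/2\), and substitute \(s\mapsto 2s+s_0\). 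The plan is to count the order of vanishing at \(s=0\) of the denominator \(D_\nu(s)\) of this quotient. The pole order of \(\zeta_\nu^{(2m)}(g,2s+\tau(\nu))\) at \(s=0\) is at most that order.

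First I handle the \(\xi\)-factors. The argument is \(2(2s+s_0)-j=4s+2m+1-\nu-j\), and at \(s=0\) it takes the integer values \(2m+1-\nu-j\) for \(0\le j\le\nu-1\), namely \(2m+2-2\nu,\dots,2m+1-\nu\). The function \(\xi\) has simple poles at \(0\) and \(1\) and no zeros at integers. Indeed, at even integers \(\le -2\) the trivial zeros of \(\zeta\) cancel the poles of \(\Gamma(s/2)\), and at odd negative integers \(\zeta\ne0\). So \(\xi\) never vanishes at an integer other than \(0,1\), and the product of the \(\xi\)-factors has a pole of order
\[ p_\nu=\#\{\,j:2m+1-\nu-j\in\{0,1\}\,\}. \]
The values form a run of consecutive integers ending at \(2m+1-\nu\ge 1\), so they meet \(\{0,1\}\) exactly when \(2m+2-2\nu\le 1\). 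Hence \(p_\nu=0\) for \(\nu\le m\). For \(\nu\ge m+1\) the run starts at \(2m+2-2\nu\le0\), so \(p_\nu=2\), except that \(p_\nu=1\) when \(2m+2-2\nu=1\), which is impossible by parity.

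Next I handle the polynomial factors. The factor \(\varepsilon_\nu(s_0)\) vanishes iff \(s_0\in\{0,\tfrac12,\dots,\tfrac{\nu-1}{2}\}\), i.e. iff \(2m+1-\nu\le\nu-1\), i.e. \(\nu\ge m+1\); the zero is simple because the roots are distinct. Similarly \(\varepsilon_\nu(m-s_0)\) vanishes iff \(2m-2s_0=\nu-1\in\{0,\dots,\nu-1\}\), which always holds, again with a simple zero. Each of these zeros stays simple after the substitution \(s\mapsto 2s+s_0\).

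Putting the counts together gives the order of \(D_\nu\) at \(s=0\). For \(\nu\le m\) it is \(1\) (from \(\varepsilon_\nu(m-s)\)) \(+0-0=1\), so there is at most a simple pole. For \(\nu\ge m+1\) it is \(1+1-2=0\), so the quotient is holomorphic.

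The main obstacle is bookkeeping: checking that \(\xi\) contributes no zeros at the relevant integer points, and handling the boundary case \(\nu=m+1\) correctly. I would also double-check that the functional equation is not needed, since entirety of \(\Xi_\nu^{(2m)}\) alone suffices for these upper bounds.
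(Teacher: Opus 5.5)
Your proposal is correct and follows essentially the same route as the paper. Both arguments write \(\zeta_\nu^{(2m)}=\Xi_\nu^{(2m)}/\bigl(2\varepsilon_\nu(u)\varepsilon_\nu(m-u)\prod_j\xi(2u-j)\bigr)\), use that \(\Xi_\nu^{(2m)}\) is entire, and count the simple zero of \(\varepsilon_\nu(m-u)\) (plus that of \(\varepsilon_\nu(u)\) when \(\nu\ge m+1\)) against the double pole of \(\xi(4s)\xi(4s+1)\) in that range. You also check explicitly that \(\xi\) does not vanish at the other integer arguments, which the paper leaves implicit but which the exact order count needs.
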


\begin{proof}
    By the definition of the completed Koecher--Maass zeta function,
    we have
    \[ \Xi_\nu^{(2m)}(g,u)=2\varepsilon_\nu(u)\varepsilon_\nu(m-u)\prod_{j=0}^{\nu-1}\xi(2u-j)\zeta_\nu^{(2m)}(g,u). \]
    Put \(u=2s+\tau(\nu)=2s+(2m+1-\nu)/2\).

    The factor \(\varepsilon_\nu(m-u)=\varepsilon_\nu\left((\nu-1)/2-2s\right)\)
    always has a simple zero at \(s=0\),
    and the factor \(\varepsilon_\nu(u)=\varepsilon_\nu\left(2s+(2m+1-\nu)/2\right)\)
    has a zero at \(s=0\) if and only if
    \[ \frac{2m+1-\nu}{2}=\frac a2 \]
    for some \(0\leq a\leq\nu-1\).
    This occurs exactly when \(\nu\geq m+1\).
    In that case the zero is simple.

    Now consider
    \[ \prod_{j=0}^{\nu-1} \xi(2u-j)= \prod_{j=0}^{\nu-1} \xi(4s+2m+1-\nu-j). \]
    If \(1\leq\nu\leq m\),
    then \(2m+1-\nu-j \geq 2m+2-2\nu \geq 2\).
    Hence all factors are holomorphic and non-zero at \(s=0\).

    If \(m+1\leq\nu\leq2m\),
    then the product contains exactly the two singular
    factors \(\xi(1+4s)\) and  \(\xi(4s)\).
    Thus the product of the \(\xi\)-factors has a pole of order \(2\) at \(s=0\).

    Combining these observations,
    the stated holomorphy and pole assertions follow.
\end{proof}

The following residue formula is due to Arakawa~\cite{Arakawa1990}.
We use it in the form recorded by Nagaoka~\cite[Proposition~3.7]{Nagaoka2024}.

\begin{prop}\label{prop:xi-special-values}
    Let \(M\) and \(\nu\) be positive integers with \(M\geq 2\nu-1\).
    Let \(g\in \bbS_M(\bbR)_{>0}\).
    For \(1\leq \mu\leq \nu-1\),
    put
    \[ c_{M,\nu,\mu}=- \frac{ \varepsilon_{\nu\mu}\, \varepsilon_\nu\left(\dfrac{M-\mu}{2}\right) v(\nu-\mu) }{ 2 \varepsilon_\mu\left(\dfrac{\nu}{2}\right) \varepsilon_\mu\left(\dfrac{M-\nu}{2}\right) }, \]
    where
    \[
        \varepsilon_{\nu\mu}=
        \prod_{\substack{0\leq a\leq \nu-1\\ a\neq \mu}}
        \left(\frac{\mu-a}{2}\right),
        \qquad
        v(a)=
        \begin{cases}
            \displaystyle\prod_{j=2}^{a}\xi(j), & a\geq2, \\[0.8em]
            1,                                  & a=1.
        \end{cases}
    \]
    Then
    \[ \Xi_\nu^{(M)}\left(g,\frac{\mu}{2}\right)=c_{M,\nu,\mu}\Xi_\mu^{(M)}\left(g,\frac{\nu}{2}\right), \]
    and
    \[ \Xi_\nu^{(M)}\left(g,\frac{M-\mu}{2}\right)=c_{M,\nu,\mu}\det(g)^{-\nu/2}\Xi_\mu^{(M)}\left(g^{-1},\frac{\nu}{2}\right). \]
\end{prop}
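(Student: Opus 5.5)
The natural route is through the adelic interpretation of the preceding Remark, which identifies \(\zeta_\nu^{(M)}(g,s)\) with a spherical degenerate Eisenstein series on \(\GL_M\). We would realize both \(\Xi_\nu^{(M)}(g,s)\) and \(\Xi_\mu^{(M)}(g,s)\) as residues, or specializations, of the minimal-parabolic spherical Eisenstein series \(E(a,\lambda)\) on \(\GL_M\). This is Langlands' theory of iterated residues. The degenerate series \(E_{P_\nu}(a,s)\) is an iterated residue of \(E(a,\lambda)\) along the hyperplanes \(\lambda_i-\lambda_{i+1}=1\) inside each Levi block. At the point \(s=\mu/2\), the parameter of the induced character for \(P_\nu\) coincides, up to a Weyl translate, with the parameter obtained from \(P_\mu\) at \(s=\nu/2\).

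Concretely, the steps are as follows.

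\begin{enumerate}
\item Write down the exponent \(\lambda_\nu(s)\in\bbC^M\) of the character \(\det^{s}\otimes\det^{-s'}\), shifted by the \(\rho\) of the Levi. For \(s=\mu/2\) and for the \(P_\mu\)-exponent at \(s=\nu/2\), verify that \(\lambda_\nu(\mu/2)\) and \(\lambda_\mu(\nu/2)\) are \(W\)-conjugate. The hypothesis \(M\ge 2\nu-1\) guarantees that the corresponding segments overlap in the right way.
\item Use the functional equations of \(E(a,\lambda)\), namely \(E(a,w\lambda)=\prod M(w,\lambda)\)-factors \(\cdot E(a,\lambda)\). Restricted to the residual subspaces, these become the assertion that the two normalized degenerate series agree up to an explicit product of ratios \(\xi(z)/\xi(z+1)\).
\item Compute that product and show it telescopes to \(c_{M,\nu,\mu}\). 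The ratios collapse to \(v(\nu-\mu)\). The polynomial factors \(\varepsilon_\nu\) and \(\varepsilon_\mu\) appear because \(\Xi\) is normalized with \(\varepsilon_\nu(s)\varepsilon_\nu(M/2-s)\), which removes the poles of \(\xi(2s-j)\) at \(2s-j\in\{0,1\}\). At \(s=\mu/2\) some factors of \(\prod\xi(\mu-j)\) are singular (\(j=\mu,\mu-1\)), so the identity must be read as a limit. The factor \(\varepsilon_{\nu\mu}\) is precisely \(\varepsilon_\nu(s)\) with its vanishing factor \((s-\mu/2)\) removed, evaluated at \(\mu/2\). It therefore arises as a derivative/residue term, and this is where the sign \(-\tfrac12\) comes from, via \(\operatorname{Res}_{z=1}\xi=1\) and \(\operatorname{Res}_{z=0}\xi=-1\).
\item The second identity follows from the first by applying the functional equation of the preceding Proposition to both sides. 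Replacing \(s\) by \(M/2-s\) turns \(\Xi_\nu^{(M)}(g,(M-\mu)/2)\) into \(\det(g)^{-\nu/2}\Xi_\nu^{(M)}(g^{-1},\mu/2)\), and we then apply the first identity with \(g^{-1}\).
\end{enumerate}

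An alternative and more classical approach avoids adelic machinery. We would write \(\zeta_\nu^{(M)}\) via the Mellin transform of a theta series \(\sum_{a\in M_{M,\nu}(\bbZ)}e^{-\pi\tr(g[a]h)}\) over \(h\in\mathcal P_\nu/\GL_\nu(\bbZ)\), as Koecher did. We then stratify the sum by \(\rank a=\mu'\) and identify, at \(s=\mu/2\), the only surviving contribution as the rank-\(\mu\) stratum. The difficulty is the same in both approaches: tracking which singular terms contribute at the special point.

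The main obstacle is the bookkeeping in step 3. The value at \(s=\mu/2\) is a limit in which a zero of \(\varepsilon_\nu(s)\) cancels a pole, both of \(\xi(2s-\mu)\) and potentially of the Eisenstein series itself. Getting the exact constant \(c_{M,\nu,\mu}\), including its sign and the factor \(v(\nu-\mu)\), requires care. I would first check it in the case \(\nu=2,\mu=1\), where \(\varepsilon_{21}=1/2\) and \(v(1)=1\), against the known Epstein zeta residue, before doing the general telescoping.
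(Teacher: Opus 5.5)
\textbf{There is a genuine gap in Steps 2--3.} The paper does not prove this proposition itself; it imports it as Arakawa's residue formula, in the form of Nagaoka's Proposition~3.7. So your outline has to stand on its own.

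\textbf{What is correct.} Step~1 is right. For \(P_\nu\) at \(s=\mu/2\) and for \(P_\mu\) at \(s=\nu/2\), the exponent is, up to translation, a segment of length \(M-\mu\) together with a segment of length \(\mu\) starting \(\nu-\mu\) places inside it. The condition \(M\geq\nu+\mu\) is what keeps the short segment inside. Step~4 is also correct.

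\textbf{Why Step 2 fails.} The mechanism of Step~2 cannot relate the two families. Exclude the edge case \(\mu=M-\nu\), which occurs only for \(M=2\nu-1\), \(\mu=\nu-1\); there the claim does follow from Lemma~\ref{lem:complementary-rank-duality} and the functional equation. Otherwise the \(P_\nu\)-residual line and the \(P_\mu\)-residual line are not Weyl conjugate as lines, since their segment lengths are \(\{\nu,M-\nu\}\) and \(\{\mu,M-\mu\}\). They only pass through conjugate points. Restricting \(E(a,w\lambda)=M(w,\lambda)E(a,\lambda)\) to a residual line relates \(E_{P_\nu}\) only to itself and to the associate family \(E_{P_{M-\nu}}\). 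It never produces \(E_{P_\mu}\). At the common point several singular hyperplanes of the Borel series meet, and without further argument a residue taken along one flag is not a multiple of the residue along another.

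\textbf{What is missing.} You need an independent identification. One option is to compute both Borel constant terms at this non-regular point, where logarithmic terms may occur, and use that a spherical form with cuspidal support in the Borel is determined by its Borel constant term. Another is Koecher's theta-integral representation, where the pole at \(s=\mu/2\) comes from the rank-\(\mu\) stratum; you mention this only in passing. Finally, \(c_{M,\nu,\mu}\) is never computed. Saying ``it telescopes'' leaves out the whole content of the statement.

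\textbf{The singularity is misdescribed.} Correcting your account shows what actually has to be proved. At \(s=\mu/2\):
\begin{enumerate}
\item \(\varepsilon_\nu(s)\) has a simple zero;
\item \(\xi(2s-\mu)\xi(2s-\mu+1)\) has a double pole;
\item \(\varepsilon_\nu(M/2-s)\neq0\), because \(M-\mu\geq\nu\). This is where \(M\geq 2\nu-1\) is really used.
\end{enumerate}
Since \(\Xi_\nu^{(M)}\) is entire, the unnormalized series \(\zeta_\nu^{(M)}(g,s)\) must \emph{vanish} at \(s=\mu/2\); it does not have a pole there. Given this vanishing, and using \(\operatorname*{Res}_{z=0}\xi=-1\) and \(\operatorname*{Res}_{z=1}\xi=1\), the first identity is equivalent to
\[
\left.\frac{\partial}{\partial s}\zeta_\nu^{(M)}(g,s)\right|_{s=\mu/2}
=\frac{2\,v(\nu)}{v(\mu)\,v(\nu-\mu)}\,\zeta_\mu^{(M)}\left(g,\frac{\nu}{2}\right).
\]
This is the residue at \(s=\mu/2\) of Koecher's completed function \(\prod_{j=0}^{\nu-1}\xi(2s-j)\,\zeta_\nu^{(M)}(g,s)\), which is why the paper calls it a residue formula. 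So the statement compares the derivative of one degenerate Eisenstein series at a zero with the value of another. It is not an equality of values at conjugate parameters up to intertwining constants.

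\textbf{A sanity check.} Before the general case, try \(M=3\), \(\nu=2\), \(\mu=1\). From \(\zeta_2^{(3)}(g,s)=\det(g)^{-s}\zeta_1^{(3)}(g^{-1},s)\) and the Epstein functional equation one gets \(\Xi_2^{(3)}(g,\tfrac12)=-\tfrac14\,\Xi_1^{(3)}(g,1)\). This matches \(c_{3,2,1}=-\tfrac14\).
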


\begin{lem}\label{lem:complementary-rank-duality}
    Let \(S\in \bbS_n(\bbR)_{>0}\).
    For \(1\leq r\leq n-1\),
    we have
    \[ \zeta_r^{(n)}(S,u)=\det(S)^{-u} \zeta_{n-r}^{(n)}(S^{-1},u). \]
    This identity initially holds in the region of absolute convergence
    and extends to all \(u\in\bbC\) by meromorphic continuation.
\end{lem}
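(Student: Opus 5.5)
The plan is to match the two Dirichlet series term by term via orthogonal complements of primitive sublattices. Write \(L=\bbZ^n\) and let \(L^\vee=\bbZ^n\) be its dual under the standard pairing. A primitive \(a\in M_{n,r}(\bbZ)^{\prim}\) modulo \(\GL_r(\bbZ)\) is the same thing as a primitive rank-\(r\) sublattice \(U=a\bbZ^r\subset L\). To \(U\) we attach its annihilator \(U^\perp=\{x\in L^\vee : {}^t x u=0 \text{ for all } u\in U\}\). This is a primitive sublattice of rank \(n-r\), and \(U\mapsto U^\perp\) is a bijection between primitive sublattices of rank \(r\) and of rank \(n-r\); the inverse is again the annihilator, since primitivity gives \((U^\perp)^\perp=U\). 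Choose \(b\in M_{n,n-r}(\bbZ)^{\prim}\) with \(b\bbZ^{n-r}=U^\perp\). Then \(b\) is well defined modulo \(\GL_{n-r}(\bbZ)\), so the bijection identifies the index sets \(M_{n,r}(\bbZ)^{\prim}/\GL_r(\bbZ)\) and \(M_{n,n-r}(\bbZ)^{\prim}/\GL_{n-r}(\bbZ)\).

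The key step is the determinant identity
\[ \det(S[a]) = \det(S)\,\det(S^{-1}[b]). \]
To prove it, complete \(a\) to \(\gamma=(a\ a')\in\GL_n(\bbZ)\), which is possible since \(a\) is primitive. Since \({}^t b a=0\) and \(b\) spans the full annihilator of \(a\bbZ^r\), after changing \(b\) by an element of \(\GL_{n-r}(\bbZ)\) we may take \(b\) to be the last \(n-r\) columns of \({}^t\gamma^{-1}\).

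Put \(S'=S[\gamma]\). Then
\[ (S')^{-1}=S^{-1}[{}^t\gamma^{-1}], \]
so \(S[a]\) is the upper-left \(r\times r\) block of \(S'\), and \(S^{-1}[b]\) is the lower-right \((n-r)\times(n-r)\) block of \((S')^{-1}\). The Schur complement (Jacobi complementary minor) formula gives
\[ \det\bigl(((S')^{-1})_{22}\bigr)=\det(S'_{11})/\det(S'). \]
Since \(\det S'=\det S\) (because \(\det\gamma=\pm1\)), this yields the displayed identity.

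Raising the identity to the power \(-u\) and summing over the bijection gives, for \(\Re(u)\) large enough that both series converge absolutely (\(\Re u>n/2\) suffices), the equality
\[ \zeta_r^{(n)}(S,u)=\det(S)^{-u}\zeta_{n-r}^{(n)}(S^{-1},u). \]
Rearrangement is legitimate by absolute convergence. Both sides are meromorphic in \(u\), by the analytic continuation of \(\Xi_\nu^{(n)}\) and the fact that the normalizing \(\xi\)- and \(\varepsilon\)-factors are meromorphic and not identically zero. The identity then holds for all \(u\) by uniqueness of meromorphic continuation.

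The only delicate point is verifying that the annihilator map is a genuine bijection of \(\GL\)-orbits of primitive matrices. Concretely, this requires that \(U^\perp\) is primitive and that \((U^\perp)^\perp=U\). Both follow from the \(\gamma\)-completion: in the basis given by \(\gamma\), we have \(U=\bbZ^r\oplus0\), and \(U^\perp\) is the coordinate complement in the dual basis.
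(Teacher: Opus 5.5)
Your proof is correct and takes the same route as the paper. Both use the orthogonal-complement bijection between primitive sublattices of ranks \(r\) and \(n-r\), the identity \(\det(S[a])=\det(S)\det(S^{-1}[b])\), and meromorphic continuation; the difference is that you actually verify the determinant identity, by completing \(a\) to \(\gamma\in\GL_n(\bbZ)\) and applying Jacobi's complementary-minor formula, where the paper only says it is easily checked.
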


\begin{proof}
    This is the primitive version of the complementary identity of
    Arakawa~\cite[(2.13)]{Arakawa1990}.
    In the region of absolute convergence,
    a primitive rank \(r\) sublattice of \(\bbZ^n\) is sent bijectively to its primitive
    orthogonal complement of rank \(n-r\).
    If \(A\in M_{n,r}(\bbZ)^{\prim}\) represents the first sublattice and
    \(A^\ast\in M_{n,n-r}(\bbZ)^{\prim}\) represents the complement, then
    \({}^t\!AA^\ast=0\), and we may easily verify that
    \[ \det({}^t\!ASA)=\det(S)\det({}^t\!A^\ast S^{-1}A^\ast). \]
    Hence
    \begin{align*}
        \zeta_r^{(n)}(S,u) & =\sum_{A\in M_{n,r}(\bbZ)^{\prim}/\GL_r(\bbZ)} \det({}^t\!ASA)^{-u}                                         \\
                           & =\det(S)^{-u} \sum_{A^\ast\in M_{n,n-r}(\bbZ)^{\prim}/\GL_{n-r}(\bbZ)} \det({}^t\!A^\ast S^{-1}A^\ast)^{-u} \\
                           & =\det(S)^{-u}\zeta_{n-r}^{(n)}(S^{-1},u).
    \end{align*}
    Meromorphic continuation gives the identity for all \(u\in\bbC\).
\end{proof}

\subsection{Siegel Series}
For a rational symmetric matrix \(T\in\bbS_\nu(\bbQ)\),
the Siegel series
\(S_\nu(T,t)\) is defined by
\[ S_\nu(T,t)=\sum_{R\in\bbS_\nu(\bbQ)/\bbS_\nu(\bbZ)} d(R)^{-t}e(\tr(TR)). \]
Here \(d(R)\) denotes the absolute value of the determinant of a denominator
matrix of \(R\).
More precisely,
write \(R=C^{-1}D\),
where \(C,D\in M_\nu(\bbZ)\) form a
coprime symmetric pair,
that is,
\(\det C\neq0\), \(C{}^tD=D{}^tC\),
and \(C\) and \(D\) are right coprime,
meaning that there exist \(U,V\in M_\nu(\bbZ)\) such that
\(CU+DV=I_\nu\).
Then \(d(R)=|\det C|\).

For a non-degenerate symmetric matrix \(h\in\bbS_\lambda(\bbQ)\) of even degree,
we use the discriminant convention
\[ \disc(h)=(-1)^{\lambda/2}\det h\in\bbQ^\times. \]
We also use the same notation for its square class in
\(\bbQ^\times/\bbQ^{\times2}\).
When we say that the discriminant of \(h\) is a square,
we mean that \(\disc(h)\in\bbQ^{\times2}\).
The corresponding quadratic character is the Kronecker character associated
with this square class.

The following proposition follows from the results of
B{\"o}cherer~\cite{Boecherer1984} and Kitaoka~\cite{Kitaoka1984}.
\begin{prop}\label{prop:siegel-series-factorization}
    The following assertions hold.
    \begin{enumerate}
        \item
              Let \(T\in\bbS_\nu(\bbQ)\) be non-degenerate.
              Then there exists a family \((F_p(T,X))_p\) of polynomials,
              indexed by primes \(p\),
              such that \(F_p(T,X)=1\) for all but finitely many \(p\),
              and
              \[
                  S_\nu(T,s)=
                  \begin{cases}
                      \displaystyle \zeta(s)^{-1} \prod_{j=1}^{\nu/2}\zeta(2s-2j)^{-1} L\left(s-\frac{\nu}{2},\chi_T\right) \prod_p F_p(T,p^{-s}), & \text{if \(\nu\) is even}, \\[1.2em]
                      \displaystyle \zeta(s)^{-1} \prod_{j=1}^{(\nu-1)/2}\zeta(2s-2j)^{-1} \prod_p F_p(T,p^{-s}),                                  & \text{if \(\nu\) is odd}.
                  \end{cases}
              \]
              Here \(\chi_T\) denotes the quadratic character attached to \(\disc(T)=(-1)^{\nu/2}\det T\) in the convention above.

        \item     Let \(h\in\bbS_\lambda(\bbQ)\) and let \(\lambda<\nu\).
              Then
              \[ S_\nu(\diag(h,0_{\nu-\lambda}),s)=\calZ_{\lambda,\nu}(s) S_\lambda(h,s-\nu+\lambda), \]
              where
              \[ \calZ_{\lambda,\nu}(s)= \frac{\zeta(s+\lambda-\nu)}{\zeta(s)} \prod_{j=1}^{\nu-\lambda}\frac{\zeta(2s-\nu-j)}{\zeta(2s-2j)}. \]
              In particular,
              we have
              \[ S_\nu(0_\nu,s)=\frac{\zeta(s-\nu)}{\zeta(s)} \prod_{j=1}^{\nu}\frac{\zeta(2s-\nu-j)}{\zeta(2s-2j)}. \]
    \end{enumerate}
\end{prop}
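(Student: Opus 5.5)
The plan is to derive both parts from the local factorization of the Siegel series. The starting point is the Euler product
\[ S_\nu(T,s)=\prod_p b_p(T,p^{-s}), \qquad b_p(T,p^{-s})=\sum_{R\in\bbS_\nu(\bbQ_p)/\bbS_\nu(\bbZ_p)} d_p(R)^{-s}e_p(\tr(TR)). \]
It holds in the region of absolute convergence and follows from the standard decomposition of \(\bbS_\nu(\bbQ)/\bbS_\nu(\bbZ)\) into \(p\)-primary parts, together with the multiplicativity of \(d(R)\) under this decomposition. After this reduction, both assertions become statements about local Siegel series, and the global identities follow by multiplying over \(p\) and then continuing meromorphically.

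\textbf{Part (1).} Here I would invoke Kitaoka's theorem on local densities. For non-degenerate \(T\) it gives
\[ b_p(T,X)=\gamma_p(T,X)F_p(T,X), \]
where \(F_p(T,X)\) is a polynomial, equal to \(1\) whenever \(p\nmid 2\det(2T)\) (suitably normalized), and
\[
\gamma_p(T,X)=
\begin{cases}
(1-X)\prod_{j=1}^{\nu/2}(1-p^{2j}X^2)\,(1-\chi_T(p)p^{\nu/2}X)^{-1}, & \nu \text{ even},\\[0.4em]
(1-X)\prod_{j=1}^{(\nu-1)/2}(1-p^{2j}X^2), & \nu \text{ odd}.
\end{cases}
\]
Substituting \(X=p^{-s}\) and taking the product over \(p\), the Euler factors \(1-p^{-s}\) give \(\zeta(s)^{-1}\), and the factors \(1-p^{2j-2s}\) give \(\zeta(2s-2j)^{-1}\). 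When \(\nu\) is even, the remaining factor \((1-\chi_T(p)p^{\nu/2-s})^{-1}\) gives \(L(s-\nu/2,\chi_T)\). One point needs care: \(\chi_T\) must be the character of \((-1)^{\nu/2}\det T\), matching the discriminant convention fixed in the paper. The identity then holds for \(\Re s\) large and extends to all \(s\) by meromorphic continuation.

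\textbf{Part (2).} This is the reduction formula for degenerate indices (B\"ocherer, and also Shimura). Locally it reads
\[ b_p(\diag(h,0_{\nu-\lambda}),X)=\calZ_{\lambda,\nu,p}(X)\,b_\lambda(h,p^{\nu-\lambda}X), \]
with \(\calZ_{\lambda,\nu,p}\) the \(p\)-Euler factor of \(\calZ_{\lambda,\nu}\). I would prove it by sorting the \(R\) according to the Bruhat-type decomposition adapted to the parabolic fixing the radical \(\bbQ_p^{\nu-\lambda}\). The character \(e_p(\tr(TR))\) depends only on the \(\lambda\times\lambda\) block, so the sum over the remaining coordinates becomes an unweighted counting of denominators. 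That counting is exactly \(S_{\nu-\lambda}(0,\cdot)\)-type data, twisted by the shift \(s\mapsto s-\nu+\lambda\) that comes from the determinant of the complementary block.

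The case \(h\) empty, i.e. \(S_\nu(0_\nu,s)\), is Siegel's classical computation of \(\sum_R d(R)^{-s}\). It can be obtained by counting coprime symmetric pairs modulo \(\GL_\nu(\bbZ)\), or equivalently from the Gindikin--Karpelevich formula for the intertwining operator on the Siegel parabolic. The general formula then follows by combining this with the fibration described above. As a consistency check, setting \(\lambda=0\) should reproduce the displayed expression for \(S_\nu(0_\nu,s)\).

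The main obstacle is the local reduction step in part (2): keeping track of the exact zeta-shifts \(\zeta(2s-\nu-j)/\zeta(2s-2j)\). Since these results are classical, in the paper I would present them as citations of B\"ocherer~\cite{Boecherer1984} and Kitaoka~\cite{Kitaoka1984}, verifying only that the normalizations (discriminant sign and the \(s\)-shift) agree with those used here.
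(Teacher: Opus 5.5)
Your proposal is correct and takes essentially the paper's approach: the paper gives no argument beyond attributing the proposition to B\"ocherer and Kitaoka, and your Euler-product reduction recovers exactly the stated global formulas, using the local factors $\gamma_p(T,X)$ and the local reduction $b_p(\diag(h,0_{\nu-\lambda}),X)=\calZ_{\lambda,\nu,p}(X)\,b_\lambda(h,p^{\nu-\lambda}X)$. Your fibration sketch for part (2) is only heuristic; for instance, $\calZ_{\lambda,\nu}(s)$ is not literally $S_{\nu-\lambda}(0_{\nu-\lambda},s)$, since its numerator factors are $\zeta(2s-\nu-j)$ rather than $\zeta(2s-\nu+\lambda-j)$, so, as you say, the citation is what carries that step.
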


The local Siegel polynomials \(F_p(T,X)\) were subsequently calculated explicitly by Katsurada~\cite{Katsurada1999}.

\subsection{Confluent Hypergeometric Function}
We now recall the confluent hypergeometric functions on tube domains,
following Shimura~\cite{Shimura1982}.
We put
\(
\kappa(\lambda)=(\lambda+1)/2.
\)
For \(g\in \bbS_\lambda(\bbR)_{>0}\),
\(h\in \bbS_\lambda(\bbR)\),
and
\((\alpha,\beta)\in\bbC^2\),
define
\[
    \eta_\lambda(g,h;\alpha,\beta)=\int_{\substack{x\in \bbS_\lambda(\bbR)\\ x\pm h>0}}
    e^{-\tr(gx)}\det(x+h)^{\alpha-\kappa(\lambda)}\det(x-h)^{\beta-\kappa(\lambda)}\,dx,
\]
where
\(dx=\prod_{1\leq i\leq j\leq \lambda}dx_{ij}\).
This integral converges for \(\Re(\alpha)>\kappa(\lambda)-1\),
\(\Re(\beta)>\lambda\).
Following the normalization used by Shimura,
we set
\[ \eta_\lambda^\ast(g,h;\alpha,\beta)=\det(g)^{\alpha+\beta-\kappa(\lambda)} \eta_\lambda(g,h;\alpha,\beta). \]
The function \(\eta_\lambda^\ast(g,h;\alpha,\beta)\) admits meromorphic
continuation in \((\alpha,\beta)\).

\subsection{Fourier Expansion of Siegel--Eisenstein Series}

We denote by \(\Lambda_n\) the set of half-integral symmetric matrices of degree
\(n\):
\[ \Lambda_n= \left\{ T=(t_{ij})\in\bbS_n(\bbQ) \;\middle|\; t_{ii}\in\bbZ,\ 2t_{ij}\in\bbZ\text{ for }i<j \right\}. \]
We also put
\[ \Lambda_n^\ast= \{T\in\Lambda_n\mid \det T\neq0\}, \qquad \Lambda_n^{(r)}= \{T\in\Lambda_n\mid \rank T=r\}. \]
For a symmetric matrix \(T\) and an appropriate matrix \(a\),
we use the shorthand \(T[a]={}^t\!aTa\).
Finally,
we put
\[ \Gamma_\nu(s)=\pi^{\nu(\nu-1)/4}\prod_{j=0}^{\nu-1}\Gamma\left(s-\frac j2\right). \]

We now state the explicit Fourier expansion formula due to Mizumoto
for the Siegel--Eisenstein series.

\begin{prop}[{\cite[Theorem 1.8]{Mizumoto1993}}]\label{prop:mizumoto}
    Let \(n\in\bbZ_{>0}\),
    \(k\in2\bbZ_{\geq0}\),
    and \(\Re(s)>n\).
    Then the Siegel--Eisenstein series admits the Fourier expansion
    \[ E_k^{(n)}(Z,s)=\sum_{\nu=0}^{n}\sum_{\lambda=0}^{\nu}F_{\nu,\lambda}^{(n)}(Z,s). \]

    For \(\nu=0,\ldots,n\),
    setting \(\kappa(\nu)=(\nu+1)/2\),
    we have
    \begin{align*}
        F_{\nu,0}^{(n)}(Z,s) & =(-1)^{k\nu/2}2^\nu\pi^{\nu\kappa(\nu)} \frac{\Gamma_\nu(2s+k-\kappa(\nu))}{\Gamma_\nu(s)\Gamma_\nu(s+k)} S_\nu(0,2s+k)\det(Y)^s \zeta_\nu^{(n)}\left(2Y,2s+k-\kappa(\nu)\right).
    \end{align*}

    For \(1\leq\lambda\leq\nu\leq n\),
    we have
    \[ F_{\nu,\lambda}^{(n)}(Z,s)= \sum_{h\in\Lambda_\lambda^\ast} \sum_{r\in M_{n,\lambda}(\bbZ)^{\prim}/\GL_\lambda(\bbZ)} b_{\nu,\lambda}^{(n)} \bigl(h[{}^tr],Y,s\bigr) e\bigl(\tr(h[{}^tr]X)\bigr), \]
    where
    \begin{align*}
        b_{\nu,\lambda}^{(n)}
        (h[{}^tr],Y,s)
         & =(-1)^{k\nu/2}
        2^\nu
        \pi^{\nu\kappa(\nu)+\lambda(\nu-\lambda)/2}
        \\
         & \quad\times
        \frac{\Gamma_{\nu-\lambda}(2s+k-\kappa(\nu))}
        {\Gamma_\nu(s)\Gamma_\nu(s+k)}
        S_\nu
        \bigl(\diag(h,0_{\nu-\lambda}),2s+k\bigr)
        \\
         & \quad\times
        \det(Y)^s
        \det(2Y[r])^{\kappa(\nu)-k-2s}
        \\
         & \quad\times
        \eta_\lambda^\ast
        \left(
        2Y[r],
        \pi h;
        s+k+\frac{\lambda-\nu}{2},
        s+\frac{\lambda-\nu}{2}
        \right)
        \\
         & \quad\times
        \zeta_{\nu-\lambda}^{(n-\lambda)}
        \left(
        2g(Y,u_r),
        2s+k-\kappa(\nu)
        \right).
    \end{align*}

    Here \(u_r\in\GL_n(\bbZ)\) is chosen so that its first \(\lambda\)
    columns are given by \(r\).
    Writing \(u_r=(r\ r_1)\) with
    \(r_1\in M_{n,n-\lambda}(\bbZ)\),
    the term \(g(Y,u_r)\) is the Schur complement of \(Y[r]\) in \(Y[u_r]\):
    \[ g(Y,u_r)=Y[r_1]-{}^tr_1Yr\,(Y[r])^{-1}{}^trYr_1. \]
\end{prop}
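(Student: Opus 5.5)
The plan is to follow the classical route of Maass and Shimura, which Mizumoto refines. It has three stages: decompose the coset space \(\Gamma_{n,\infty}\backslash\Gamma_n\) by the rank \(\nu\) of the lower-left block \(C\), apply Poisson summation in \(X\), and evaluate the resulting archimedean integrals by Shimura's theory of \(\eta_\lambda\).

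\textbf{Step 1: rank decomposition.} For each coset, choose \(U\in\GL_n(\bbZ)\) such that
\[
(C\ D)=(C_1\ 0;\,0\ 0)\cdot(\text{block form}),
\]
with \(C_1\) an invertible \(\nu\times\nu\) matrix. The cosets of rank \(\nu\) are then parametrized by three pieces of data:
\begin{itemize}
\item a primitive \(n\times\nu\) matrix \(a\) modulo \(\GL_\nu(\bbZ)\), coming from the first \(\nu\) columns of \({}^tU^{-1}\);
\item a coprime symmetric pair \((C_1,D_1)\), i.e.\ a class \(R=C_1^{-1}D_1\in\bbS_\nu(\bbQ)\);
\item a translation by \(\bbS_\nu(\bbZ)\).
\end{itemize}
On such a coset the summand equals
\[
d(R)^{-k-2s}\det(Z[a]+R)^{-k}\,|\det(Z[a]+R)|^{-2s}\det(Y)^s .
\]
So the rank-\(\nu\) part becomes
\[
\det Y^s\sum_a\ \sum_{R\in\bbS_\nu(\bbQ)/\bbS_\nu(\bbZ)}d(R)^{-k-2s}\sum_{S\in\bbS_\nu(\bbZ)}\det(Z[a]+R+S)^{-k}\,|\cdot|^{-2s},
\]
which is absolutely convergent for \(\Re s>n\).

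\textbf{Step 2: Poisson summation.} Apply Poisson summation over \(S\). This produces
\[
\sum_{T\in\Lambda_\nu} e(\tr(TR))\,\xi_\nu\bigl(Y[a],T;\,k+s,\,s\bigr)\,e(\tr(TX[a])),
\]
where \(\xi_\nu\) is Shimura's generalized confluent hypergeometric integral. The sum over \(R\) then yields the Siegel series \(S_\nu(T,2s+k)\).

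Next split \(T\) by its rank \(\lambda\). Write \(T=\diag(h,0)[V]\) with \(V\in\GL_\nu(\bbZ)\) and \(h\in\Lambda^\ast_\lambda\). Combining \(a\) with \(V\) gives a pair \((r,a')\), where \(r=a\cdot(\text{first }\lambda\text{ columns})\) is primitive of rank \(\lambda\) and the remaining columns determine a primitive \((\nu-\lambda)\)-frame in the quotient lattice \(\bbZ^n/r\bbZ^\lambda\).

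Shimura's formulas reduce a degenerate-index \(\xi_\nu\) to two pieces:
\begin{itemize}
\item the factor \(\Gamma_{\nu-\lambda}\) times the nondegenerate \(\eta^\ast_\lambda\) evaluated at \(2Y[r]\) and \(\pi h\);
\item a power of the determinant of the Schur-complement block.
\end{itemize}
The normalization of \(\xi\) versus \(\eta\) accounts for the constant factors \(2^\nu\), \(\pi^{\nu\kappa(\nu)+\lambda(\nu-\lambda)/2}\), \((-1)^{k\nu/2}\) and \(1/(\Gamma_\nu(s)\Gamma_\nu(s+k))\).

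\textbf{Step 3: reassembly.} Summing the Schur-complement factor \(\det(2g(Y,u_r)[b])^{-(2s+k-\kappa(\nu))}\) over primitive \(b\in M_{n-\lambda,\nu-\lambda}(\bbZ)^{\prim}/\GL_{\nu-\lambda}(\bbZ)\) gives exactly
\[
\zeta^{(n-\lambda)}_{\nu-\lambda}\bigl(2g(Y,u_r),\,2s+k-\kappa(\nu)\bigr).
\]
This uses \(\det(Y[(r\ r_1b)])=\det Y[r]\cdot\det g[b]\). Finally, Proposition~\ref{prop:siegel-series-factorization}(2) identifies \(S_\nu(T,\cdot)\) with \(S_\nu(\diag(h,0),\cdot)\). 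The \(\lambda=0\) case is the same computation with \(r\) empty.

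\textbf{Main obstacle.} The delicate step is the bookkeeping in the second and third stages. One must make the parametrization \((a,V)\leftrightarrow(r,b)\) bijective modulo the correct groups, since \(u_r\) is only defined up to \(r_1\mapsto r_1+r\ast\) and up to \(\GL_{n-\lambda}(\bbZ)\), and \(g(Y,u_r)\) is invariant under these. One must also track Shimura's degenerate-index reduction of \(\xi_\nu\) precisely enough to get the exact powers of \(2\) and \(\pi\) and the shifts \(s+k+(\lambda-\nu)/2\) and \(s+(\lambda-\nu)/2\) in \(\eta^\ast_\lambda\).
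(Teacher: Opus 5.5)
Your outline is correct and follows essentially the route behind the statement. The paper does not reprove it but cites Mizumoto, and its adelic remark identifies the formula as the unfolding along the Bruhat cells $\rank C=\nu$, which is your Step 1; from there your sequence is the right one: Poisson summation giving $S_\nu(T,2s+k)$ and Shimura's integral, the relation $\xi_\nu(g,h;\alpha,\beta)=i^{\nu(\beta-\alpha)}2^\nu\pi^{\nu\kappa(\nu)}\Gamma_\nu(\alpha)^{-1}\Gamma_\nu(\beta)^{-1}\eta_\nu(2g,\pi h;\alpha,\beta)$, the reduction of $\eta_\nu$ at $\diag(\pi h,0)$ to $\Gamma_{\nu-\lambda}$ times $\eta_\lambda$ times a Schur-complement determinant, and regrouping into $\zeta_{\nu-\lambda}^{(n-\lambda)}(2g(Y,u_r),\cdot)$.

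Two small attributions should be fixed. The identity $S_\nu(T,\cdot)=S_\nu(\diag(h,0),\cdot)$ is just the $\GL_\nu(\bbZ)$-invariance of the Siegel series, so Proposition~\ref{prop:siegel-series-factorization}(2) is not needed. The factor $\pi^{\lambda(\nu-\lambda)/2}$ comes from the Gaussian integral over the off-diagonal block in that reduction, not from the $\xi$ versus $\eta$ normalization.
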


Let
\[ E_k^{(n)}(Z,s)=\sum_{T\in\Lambda_n} A_T^{(n)}(Y,s)e(\tr(TX)) \]
be the Fourier expansion of \(E_k^{(n)}(Z,s)\),
where \(Z=X+\sqrt{-1}Y\).
We shall calculate these coefficients at \(s=0\),
and we put
\[ A_T^{(n)}(Y)= A_T^{(n)}(Y,0). \]
We note that the coefficient \(A_T^{(n)}(Y,s)\) is holomorphic at \(s=0\).

\begin{rem}[Adelic interpretation of Mizumoto's expansion]
    Mizumoto's formula can be regarded as the classical expression of the
    Fourier expansion of a Siegel Eisenstein series on the adelic symplectic
    group.
    Let \(G=\Sp_n\), and let \(P=MN\subset G\) be the Siegel parabolic
    subgroup.
    Thus \(M\simeq\GL_n\), and
    \[
        N=\left\{
        n(U)=
        \begin{pmatrix}
            1_n & U   \\
            0   & 1_n
        \end{pmatrix}
        \;\middle|\;
        U={}^t\!U
        \right\}.
    \]
    Let \(K_f=\Sp_n(\widehat{\bbZ})\).
    We take the standard \(K_f\)-spherical section at the finite places and
    the archimedean section of weight \(k\).
    The associated adelic Eisenstein series is
    \[ E(g,s)=\sum_{\gamma\in P(\bbQ)\backslash G(\bbQ)} f_{k,s}(\gamma g). \]
    We normalize \(f_{k,s}\) so that
    \[ E(g_Z,s)=E_k^{(n)}(Z,s), \qquad g_Z=(g_\infty(Z),1_f), \]
    where \(g_\infty(Z)\in\Sp_n(\bbR)\) sends
    \(\sqrt{-1}\cdot1_n\) to \(Z=X+\sqrt{-1}Y\).

    For \(T\in\Lambda_n\), define a character of \(N(\bbA)\) by
    \[ \psi_T(n(U))=\psi(\tr(TU)), \]
    where \(\psi\) is the standard additive character of \(\bbA/\bbQ\).
    With the self-dual Haar measure attached to \(\psi\), the adelic
    \(T\)-th Fourier coefficient is
    \[ E_T(g,s)=\int_{N(\bbQ)\backslash N(\bbA)} E(n(U)g,s)\psi_T(n(U))^{-1}\,dU. \]
    Evaluating at \(g=g_Z\), this coefficient has the classical form
    \[ E_T(g_Z,s)=A_T^{(n)}(Y,s)e(\tr(TX)). \]

    In the region of absolute convergence, we insert the Eisenstein series
    into this integral and unfold.
    The unfolding is organized by the Bruhat decomposition with respect to
    the Siegel parabolic:
    \[ G(\bbQ)=\coprod_{\nu=0}^n P(\bbQ)w_\nu P(\bbQ), \]
    where the cell \(P(\bbQ)w_\nu P(\bbQ)\) is characterized by the
    condition that the lower-left block has rank \(\nu\).
    Thus the Fourier coefficient decomposes as
    \[ E_T(g,s)=\sum_{\nu=0}^n E_{T,\nu}(g,s), \]
    where \(E_{T,\nu}(g,s)\) denotes the contribution of the Bruhat cell of
    rank \(\nu\).

    Suppose now that \(T\) has rank \(\lambda\).
    We write
    \[ T=h[{}^tr], \qquad h\in\Lambda_\lambda^\ast, \quad r\in M_{n,\lambda}(\bbZ)^{\prim}. \]
    The rank-\(\nu\) contribution vanishes unless \(\lambda\leq\nu\).
    For \(\lambda\leq\nu\), the unfolded contribution
    \(E_{T,\nu}(g_Z,s)\) is the \(T\)-th Fourier coefficient of the
    \((\nu,\lambda)\)-term in Mizumoto's expansion.
    Therefore Mizumoto's expansion
    \[ E_k^{(n)}(Z,s)=\sum_{\nu=0}^{n}\sum_{\lambda=0}^{\nu}F_{\nu,\lambda}^{(n)}(Z,s) \]
    is the classical form of the adelic Fourier expansion obtained by
    unfolding the Siegel Eisenstein series and decomposing the result
    according to the Bruhat rank \(\nu\). \(\diamond\)
\end{rem}

\subsection{Fourier Coefficients in the Holomorphic Range}

Before specializing to the boundary weight,
we recall the standard Fourier coefficient formula in the holomorphic range.
This formula serves as a reference point for the local Siegel polynomial factors
which occur in the Siegel series.

For a commutative ring \(R\) and symmetric matrices \(A,B\in\bbS_n(R)\),
we write \(A\sim_R B\)
if there exists \(g\in\GL_n(R)\) such that \(B=A[g]\).
When \(A\) and \(B\) have different degrees,
we use this notation only after both matrices have been embedded into
matrices of the same degree by adjoining zero blocks.

Let \(T\in\Lambda_n\) be positive semi-definite of rank \(\ell\).
For each prime \(p\),
choose a non-degenerate matrix
\(\widetilde T_p\in\Lambda_\ell\otimes\bbZ_p\) such that
\(T\sim_{\bbZ_p} \diag(\widetilde T_p,
0_{n-\ell})\).
The local Siegel polynomial \(F_p(\widetilde T_p,X)\) is independent of this
choice.
We put \(F_p^\ast(T,X)=F_p(\widetilde T_p,X)\).
If \(\ell=0\),
we use the convention \(F_p^\ast(0_n,X)=1\).

For \(T\in\Lambda_n\) with \(T\geq0\) and \(\rank(T)=\ell\),
choose a positive definite matrix \(\widetilde T\in\Lambda_\ell\) such that \(T\sim_{\bbZ}\diag(\widetilde T,
0_{n-\ell})\).
If \(\ell\) is even,
let \(\chi_T^\ast\) be the quadratic character attached to
\(\bbQ\bigl(\sqrt{(-1)^{\ell/2}\det\widetilde T}\bigr)\).
This character is independent of the choice of \(\widetilde T\).

We put
\[\widetilde E_k^{(n)}(Z)=\zeta(1-k)\prod_{i=1}^{\lfloor n/2\rfloor}\zeta(1+2j-2k)E_k^{(n)}(Z).\]

\begin{prop}\label{prop:holomorphic-range-fourier-coefficients}
    Let \(k\in2\bbZ\).
    Assume that \(k\geq (n+1)/2\),
    and that neither of the exceptional conditions
    \(k=(n+2)/2\equiv2\pmod4\),
    \(k=(n+3)/2\equiv2\pmod4\)
    holds.
    Then \(\widetilde E_k^{(n)}(Z)\) is holomorphic,
    and for \(T\in\Lambda_n\),
    \(T\geq0\),
    \(\rank(T)=\ell\),
    the Fourier coefficient is
    \[ a(T,\widetilde E_k^{(n)})= 2^{\left\lfloor(\ell+1)/2\right\rfloor} \prod_{p\mid\det(2\widetilde T)} F_p^\ast(T,p^{k-\ell-1}) \calL^{(n)}_{k,\ell}(T), \]
    where
    \[
        \calL^{(n)}_{k,\ell}(T)=
        \begin{cases}
            \displaystyle \prod_{i=\ell/2+1}^{\lfloor n/2\rfloor} \zeta(1+2i-2k)\, L(1+\ell/2-k,\chi_T^\ast), & \ell\text{ even}, \\[1.2em]
            \displaystyle \prod_{i=(\ell+1)/2}^{\lfloor n/2\rfloor} \zeta(1+2i-2k),                           & \ell\text{ odd}.
        \end{cases}
    \]
    Here an empty product is understood to be \(1\).
\end{prop}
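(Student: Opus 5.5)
The plan is to read off the coefficient from Mizumoto's expansion (Proposition~\ref{prop:mizumoto}) at \(s=0\), which is legitimate once we know \(E_k^{(n)}(Z,s)\) is holomorphic at \(s=0\) (Proposition~\ref{prop:eisenstein}); the formula for \(\Re(s)>n\) then extends by continuation. Holomorphy of \(\widetilde E_k^{(n)}\) itself is part (2) of Proposition~\ref{prop:eisenstein}. Fix \(T\geq0\) of rank \(\ell\), and write \(T=h[{}^tr]\) with \(h\in\Lambda_\ell^\ast\) positive definite (so \(h\sim\widetilde T\)) and \(r\) primitive; this pair is unique up to \(\GL_\ell(\bbZ)\).

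The \(T\)-th coefficient is \(\sum_{\nu=\ell}^n b_{\nu,\ell}^{(n)}(T,Y,s)\); for \(\ell=0\) it is \(\sum_\nu F_{\nu,0}^{(n)}\). The first step is to show that only \(\nu=\ell\) survives at \(s=0\). The factor \(1/\Gamma_\nu(s)\) contributes a simple zero at \(s=0\) for each \(j\) in \(\Gamma(s-j/2)\) with \(j\) even, so it vanishes to order \(\lceil\nu/2\rceil\ge1\) when \(\nu\geq1\). We must compare this with possible poles of the other factors.
\begin{enumerate}
\item The Siegel series \(S_\nu(\diag(h,0),k+2s)\), via Proposition~\ref{prop:siegel-series-factorization}(2) and the factors \(\calZ_{\ell,\nu}\).
\item The gamma quotient \(\Gamma_{\nu-\ell}(2s+k-\kappa(\nu))\).
\item The confluent function \(\eta^\ast_\ell\) with second parameter \(s+(\ell-\nu)/2\). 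By Shimura's results, \(\eta^\ast\) evaluated at \(\beta=s+(\ell-\nu)/2\) with \(\nu>\ell\) produces, after division by \(\Gamma_\ell(\beta)\)-type factors, a vanishing for positive definite \(h\). This is the standard "holomorphic projection" phenomenon: the term with \(\nu=\ell\) gives \(\eta^\ast_\ell(2Y,\pi h;k,0)\) normalized against \(\Gamma_\ell(s)\), which is essentially \(e^{-2\pi\tr(hY)}\) times an explicit constant.
\item The Koecher--Maass zeta \(\zeta_{\nu-\ell}^{(n-\ell)}\) at \(k-\kappa(\nu)\), using Corollary~\ref{cor:koecher-maass-zeta-poles}-type pole analysis.
\end{enumerate}
The hypothesis \(k\geq(n+1)/2\), excluding the two boundary cases, should guarantee that the total order of vanishing stays positive for every \(\nu>\ell\).

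For \(\nu=\ell\), the \(r\)-sum collapses and \(\zeta_0=1\). The \(\eta^\ast\) factor combined with \(1/(\Gamma_\ell(s)\Gamma_\ell(s+k))\) yields, via Shimura's formula \(\lim_{\beta\to0}\eta_\ell^\ast(g,h;\alpha,\beta)/\Gamma_\ell(\beta)\propto \det(2h)^{\alpha-\kappa}e^{-\tr(gh)}\), the holomorphic term \(e^{-2\pi\tr(TY)}\) times \(\pi^{\ell k}\det(h)^{k-\kappa(\ell)}/\Gamma_\ell(k)\). The Siegel series \(S_\ell(h,k)\) is then evaluated by Proposition~\ref{prop:siegel-series-factorization}(1) at \(s=k\).

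The last step converts the result to the stated form using the functional equation of \(\zeta\) and \(L(s,\chi)\): \(\zeta(k)^{-1}\prod\zeta(2k-2j)^{-1}L(k-\ell/2,\chi_T)\) becomes \(\zeta(1-k)\)-type values, \(L(1+\ell/2-k,\chi_T^\ast)\), and \(\det\widetilde T^{\,k-\kappa}\). The local factors \(F_p\) turn into \(F_p^\ast(T,p^{k-\ell-1})\) by their functional equation \(F_p(T,p^{-\ell-1}X^{-1})=\pm(p^{(\ell+1)/2}X)^{-\ord_p\det(2T)}F_p(T,X)\). We then multiply by the normalizing \(\zeta(1-k)\prod\zeta(1+2i-2k)\); the leftover factors are exactly \(\calL^{(n)}_{k,\ell}\) and the power \(2^{\lfloor(\ell+1)/2\rfloor}\). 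Finally we need the standard fact that the \(\ell\)-dependent powers of \(\pi\), \(2\), and \(\det\) cancel after applying the functional equations. This bookkeeping is routine but lengthy.

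The main obstacle is the vanishing of the \(\nu>\ell\) terms at \(s=0\) near the boundary, i.e. for \(k=(n+1)/2\) and for the nonexceptional \(k=(n+2)/2,(n+3)/2\). There the zeros of \(1/\Gamma_\nu(s)\) can be cancelled by poles of \(\zeta_{\nu-\ell}^{(n-\ell)}\), by \(\xi(1)\)/\(\xi(0)\) factors, and by the \(\zeta\)-quotients in \(\calZ_{\ell,\nu}\). The congruence \(k\not\equiv2\pmod4\) (equivalently, the sign \((-1)^{k\nu/2}\) pattern) is what forces the relevant combinations to cancel. I would first try to count orders uniformly; if that fails, I would simply invoke the holomorphy part of Proposition~\ref{prop:eisenstein}(2) together with the Shimura--Weissauer identification of the holomorphic form \(E_k^{(n)}\). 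Since a holomorphic form's coefficients are determined by the \(e^{-2\pi\tr(TY)}\)-part, this would let us read off only the \(\nu=\ell\) term without checking each cancellation.
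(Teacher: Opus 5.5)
There is a genuine gap. You take a different route from the paper, which derives nothing and simply cites Shimura's coefficient formula together with Katsurada's explicit \(F_p\). You instead try to extract the coefficient from Mizumoto's expansion at \(s=0\).

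\textbf{Item 3 is misdescribed.} For large \(k\) your route can be made to work, but not for the reason you give in item 3. For \(h>0\) the quotient \(\eta_\ell(g,h;\alpha,\beta)/\Gamma_\ell(\beta)\) is entire and in general non-zero at \(\beta=-(\nu-\ell)/2\). So the \(\eta\)-factor creates no zero; it absorbs the factor \(\Gamma_\ell(s-(\nu-\ell)/2)\) of \(\Gamma_\nu(s)\). What remains is the zero of \(1/\Gamma_{\nu-\ell}(s)\), of order \(\lceil(\nu-\ell)/2\rceil\) rather than \(\lceil\nu/2\rceil\). That zero kills the \(\nu>\ell\) terms only when \(S_\nu(\diag(h,0),2s+k)\), \(\Gamma_{\nu-\ell}(2s+k-\kappa(\nu))\) and \(\zeta^{(n-\ell)}_{\nu-\ell}\) are all regular at \(s=0\), i.e.\ away from the boundary.

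\textbf{The fallback is circular.} The real gap is the boundary range, and your fallback does not close it. Holomorphy only gives \(\sum_{\nu\ge\ell}A^{(n)}_{T,\nu}(Y)=a(T)e^{-2\pi\tr(TY)}\). Nothing prevents \(\sum_{\nu>\ell}A^{(n)}_{T,\nu}(Y)\) from containing a multiple of \(e^{-2\pi\tr(TY)}\), so reading off only the \(\nu=\ell\) term assumes what has to be proved.

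\textbf{Such multiples do occur.} Take \(T=0\) and \(k=(n+1)/2\), which the hypotheses allow when \(n\equiv3\pmod4\). Then \(\zeta^{(n)}_n(2Y,2s+k-\kappa(n))=\det(2Y)^{-2s}\), so the top cell \(C_n(Y,0)\) is a constant.
\begin{enumerate}
\item For \((n,k)=(3,2)\), direct evaluation gives \(C_3(Y,0)=-1\). The Epstein functional equation gives \(C_1(Y,0)+C_2(Y,0)=0\). Hence \(A^{(3)}_0(Y)=0\).
\item For \((n,k)=(7,4)\), one gets \(C_7(Y,0)=1\). Under \(Y\mapsto tY\) the other \(C_\nu(Y,0)\) scale like \(t^{-\nu(7-\nu)/2}\), up to powers of \(\log t\), so they cannot contribute a constant. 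Hence the constant term is \(2\).
\end{enumerate}
These are the incoherent vanishing and the factor \(2\) of the Siegel--Weil formula in the boundary case \(m=n+1\).

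\textbf{Consequence for the statement.} The displayed formula gives \(a(0,E^{(n)}_k)=1\). So the statement is false at \(k=(n+1)/2\) and needs \(k>(n+1)/2\), whatever the method.

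\textbf{The remaining boundary cases.} For \(k=(n+2)/2\) or \((n+3)/2\) with \(k\equiv0\pmod4\), you would still have to carry out the cancellation among the \(\nu>\ell\) terms explicitly. Compare the proof of Theorem~\ref{thm:constant-term} with \(m\equiv3\pmod4\): there \(C_1,C_2,C_{2m-1},C_{2m}\) are individually non-zero and cancel only because \(\epsilon_m=1\). You would also have to show that nothing proportional to \(e^{-2\pi\tr(TY)}\) survives the cancellation. Your remark about the sign pattern \((-1)^{k\nu/2}\) is a heuristic, not an argument.
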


\begin{proof}
    This is the standard Fourier coefficient formula for holomorphic
    Siegel--Eisenstein series.
    The formula follows from the Fourier coefficient formula for Eisenstein series
    due to Shimura~\cite{Shimura1983} and the explicit factorization of the local Siegel
    series into the polynomials \(F_p(T,X)\) due to Katsurada~\cite{Katsurada1999}.
\end{proof}

\section{The Constant Term}

In this section,
we calculate the constant term \(A_0^{(n)}(Y)\) of
\(E_k^{(n)}(Z)\).
Throughout this section,
we assume that \(n=2m\) with odd \(m\geq3\),
and put \(k=m+1\).

The main case of the paper is \(m\equiv1\pmod4\).
Nevertheless,
the constant term can be calculated uniformly for all odd \(m\geq3\).
The same formula also explains the simplification which occurs when
\(m\equiv3\pmod4\).
The only extra contribution which appears when \(m=3\) comes from the
middle ranks \(\nu=3,4\);
its cancellation is recorded separately in
Appendix~\ref{app:degree-six-middle-rank}.

We put
\[ \tau(\nu)=k-\kappa(\nu)=\frac{n+1-\nu}{2}. \]

By Proposition~\ref{prop:mizumoto},
the constant Fourier coefficient is given by
\[ A_0^{(n)}(Y,s)= \sum_{\nu=0}^{n}C_\nu(Y,s), \qquad C_\nu(Y,s)=F_{\nu,0}^{(n)}(Z,s). \]
We expand each \(C_\nu(Y,s)\) as a Laurent series at \(s=0\),
writing
\[ C_\nu(Y,s)=\sum_j c_{\nu,j}(Y)s^j. \]
Since \(E_k^{(n)}(Z,s)\) is holomorphic at \(s=0\),
the desired constant term is obtained by taking the finite part:
\[ A_0^{(n)}(Y)=\sum_{\nu=0}^{n}c_{\nu,0}(Y). \]

For \(\nu=0\),
we have \(C_0(Y,s)=\det(Y)^s\).
Hence
\[ C_0(Y,0)=1. \]

For \(1\leq\nu\leq n\),
Proposition~\ref{prop:mizumoto} gives
\begin{equation}\label{eq:Cnu-first-column} C_\nu(Y,s)=(-1)^{k\nu/2}2^\nu\pi^{\nu\kappa(\nu)} \frac{\Gamma_\nu(2s+\tau(\nu))}{\Gamma_\nu(s)\Gamma_\nu(s+m+1)} S_\nu(0_\nu,2s+m+1)\det(Y)^s\zeta_\nu^{(n)}(2Y,2s+\tau(\nu)). \end{equation}
By Proposition~\ref{prop:siegel-series-factorization},
we have
\begin{equation}\label{eq:siegel-zero} S_\nu(0_\nu,w)=\frac{\zeta(w-\nu)}{\zeta(w)} \prod_{j=1}^{\nu}\frac{\zeta(2w-\nu-j)}{\zeta(2w-2j)}. \end{equation}

We first determine which terms can contribute to the finite part at \(s=0\).
Combining \eqref{eq:siegel-zero} with the order of the gamma quotient and
Corollary~\ref{cor:koecher-maass-zeta-poles},
we obtain
\[
    \ord_{s=0} C_\nu(Y,s)
    \geq
    \begin{cases}
        \left\lceil\nu/2\right\rceil-1,      & 1\leq\nu\leq m-1,                     \\[4pt]
        (m-3)/2,                             & \nu=m,m+1,                            \\[4pt]
        \left\lceil(2m-\nu)/2\right\rceil-1, & m+2\leq\nu\leq2m-1,\ \nu\text{ odd},  \\[4pt]
        \left\lceil(2m-\nu)/2\right\rceil,   & m+2\leq\nu\leq2m-1,\ \nu\text{ even}, \\[4pt]
        0,                                   & \nu=2m.
    \end{cases}
\]
Here the lower bound is obtained by separating the three elementary contributions:
the zero of the reciprocal gamma quotient,
the zeros and poles of \(S_\nu(0,2s+m+1)\),
and the possible poles of \(\zeta_\nu^{(2m)}(Y,s+m+1-\nu/2)\).
The borderline equality in the middle ranks occurs only for \(m=3\).

Consequently,
if \(m\geq 5\),
then the only terms that can contribute to the finite part are
\(C_0\), \(C_1\), \(C_2\), \(C_{2m-1}\), and \(C_{2m}\).
If \(m=3\),
the additional terms \(C_3\) and \(C_4\) may also contribute;
their sum is zero by
Proposition~\ref{prop:degree-six-middle-rank-cancellation}.

We put \(\epsilon_m=(-1)^{(m+1)/2}\).

\subsection*{The case \(\nu=1\)}

By \eqref{eq:Cnu-first-column},
we have
\[ C_1(Y,s)=\epsilon_m 2\pi \frac{\Gamma(2s+m)}{\Gamma(s)\Gamma(s+m+1)} S_1(0,2s+m+1)\det(Y)^s\zeta_1^{(n)}(2Y,2s+m). \]
Using \eqref{eq:siegel-zero},
we obtain
\[ S_1(0,2s+m+1)=\frac{\zeta(m)}{\zeta(m+1)} +O(s). \]

We use the residue formula for the Epstein zeta function
\cite[\S 1.4,
    Theorem~1]{Terras1985}.
For a positive definite real symmetric matrix \(S\) of degree \(n\),
put
\[ Z(S,u)=\frac{1}{2}\sum_{0\neq x\in\bbZ^n}({}^txSx)^{-u}. \]
With this normalization,
we have
\begin{equation}\label{eq:zeta-one-residue} \operatorname*{Res}_{u=n/2}Z(S,u)= \frac{\pi^{n/2}}{2\Gamma(n/2)\sqrt{\det S}}. \end{equation}
Since
\[ Z(S,u)=\zeta(2u)\zeta_1^{(n)}(S,u), \]
we obtain
\[ \operatorname*{Res}_{u=m}\zeta_1^{(n)}(2Y,u)=\frac{\pi^m}{2^{m+1}\Gamma(m)\zeta(2m)\sqrt{\det Y}}. \]
Hence
\[ \zeta_1^{(n)}(2Y,2s+m)= \frac{1}{2s} \frac{\pi^m}{2^{m+1}\Gamma(m)\zeta(2m)\sqrt{\det Y}} +O(1). \]

Moreover,
\[ \frac{\Gamma(2s+m)}{\Gamma(s)\Gamma(s+m+1)}=s\frac{\Gamma(m)}{\Gamma(m+1)} +O(s^2). \]
Substituting these expansions,
we obtain
\[ C_1(Y,0)= \epsilon_m \frac{\pi^{m+1}\zeta(m)}{2^{m+1}m!\zeta(m+1)\zeta(2m)} \frac{1}{\sqrt{\det Y}}. \]

\subsection*{The case \(\nu=2\)}

By \eqref{eq:Cnu-first-column},
we have
\[ C_2(Y,s)= 4\pi^3 \frac{\Gamma_2\left(2s+\frac{2m-1}{2}\right)}{\Gamma_2(s)\Gamma_2(s+m+1)} S_2(0,2s+m+1) \det(Y)^s \zeta_2^{(n)} \left(2Y,2s+\frac{2m-1}{2}\right). \]
Using \eqref{eq:siegel-zero},
we have
\[ S_2(0,2s+m+1)=\frac{\zeta(m-1)\zeta(2m-1)}{\zeta(m+1)\zeta(2m)} +O(s). \]
Using the completed Koecher--Maass zeta function,
we have
\[ \zeta_2^{(n)}(2Y,u)=\frac{\Xi_2^{(n)}(2Y,u)}{2\varepsilon_2(u)\varepsilon_2(m-u)\xi(2u)\xi(2u-1)}. \]
A direct Laurent expansion gives
\[ \zeta_2^{(n)} \left(2Y,2s+\frac{2m-1}{2}\right)= -\frac{2\,\Xi_2^{(n)}\left(2Y,\frac{2m-1}{2}\right)}{s(2m-1)(2m-2)\xi(2m-1)\xi(2m-2)} +O(1). \]
Also,
\[ \frac{\Gamma_2\left(2s+\frac{2m-1}{2}\right)}{\Gamma_2(s)\Gamma_2(s+m+1)}= -\frac{s}{2\pi}\frac{\Gamma_2\left(\frac{2m-1}{2}\right)}{\Gamma_2(m+1)} +O(s^2). \]
Thus
\[ C_2(Y,0)= \frac{2^{2m+2}\pi^{2m}}{(2m)!(2m-1)(2m-2)} \frac{\zeta(m-1)}{\zeta(m+1)\zeta(2m-2)\zeta(2m)} \Xi_2^{(n)} \left(2Y,\frac{2m-1}{2}\right). \]

\subsection*{The case \(\nu=2m\)}

Since
\[ \zeta_{2m}^{(2m)}(g,u)=\det(g)^{-u}, \]
we have
\[ \zeta_{2m}^{(2m)} \left(2Y,2s+\frac12\right)= \det(2Y)^{-2s-1/2}. \]
Substituting this into \eqref{eq:Cnu-first-column},
\[ C_{2m}(Y,s)= 2^{-4ms+m}\pi^{m(2m+1)} \det(Y)^{-s-1/2} \frac{\Gamma_{2m}\left(2s+\frac12\right)}{\Gamma_{2m}(s)\Gamma_{2m}(s+m+1)} S_{2m}(0,2s+m+1). \]
By the definition of the multivariate gamma function,
\[ \frac{\Gamma_{2m}\left(2s+\frac12\right)}{\Gamma_{2m}(s)\Gamma_{2m}(s+m+1)}= -\frac{(2m)!}{2^{3m}m!\Gamma_{2m}(m+1)} +O(s). \]
Using \eqref{eq:siegel-zero},
we obtain
\[ S_{2m}(0,2s+m+1)=\frac{\zeta(2s+1-m)}{\zeta(2s+m+1)} \frac{\prod_{r=1}^{m}\zeta(4s+3-2r)}{\prod_{r=1}^{m}\zeta(4s+2r)}. \]
Since \(m\) is odd,
the factor \(\zeta(2s+1-m)\) has a simple zero at
\(s=0\),
while \(\zeta(4s+1)\) has a simple pole.
Thus
\[ S_{2m}(0,2s+m+1)= \frac{\zeta'(1-m)}{2\zeta(m+1)} \frac{\prod_{r=1}^{m-1}\zeta(1-2r)}{\prod_{r=1}^{m}\zeta(2r)} +O(s). \]
By the functional equation of the Riemann zeta function, since \(m\) is odd,
\[ \zeta'(1-m)=(-1)^{(m-1)/2}2^{-m}\pi^{1-m}\Gamma(m)\zeta(m). \]
Combining this identity with the gamma factor and the remaining zeta factors,
we obtain
\[ C_{2m}(Y,0)=-\frac{\pi^{m+1}\zeta(m)}{2^{m+1}m!\zeta(m+1)\zeta(2m)} \frac{1}{\sqrt{\det Y}}. \]

\subsection*{The case \(\nu=2m-1\)}

By \eqref{eq:Cnu-first-column},
we have
\begin{align*}
    C_{2m-1}(Y,s)
    = & (-1)^{(m+1)(2m-1)/2}2^{2m-1}\pi^{(2m-1)\kappa(2m-1)}                    \\
      & \times \frac{\Gamma_{2m-1}(2s+1)}{\Gamma_{2m-1}(s)\Gamma_{2m-1}(s+m+1)} \\
      & \times S_{2m-1}(0,2s+m+1)\det(Y)^s\zeta_{2m-1}^{(2m)}(2Y,2s+1).
\end{align*}
The gamma factor satisfies
\[ \frac{\Gamma_{2m-1}(2s+1)}{\Gamma_{2m-1}(s)\Gamma_{2m-1}(s+m+1)} =s\frac{(2m-2)!}{2^{3(m-1)}\Gamma_{2m-1}(m+1)}+O(s^2). \]
Using \eqref{eq:siegel-zero},
we obtain
\[ S_{2m-1}(0,2s+m+1)=\frac{1}{4s}\frac{\zeta(2-m)}{\zeta(m+1)} \frac{\prod_{r=1}^{m-2}\zeta(1-2r)}{\prod_{r=2}^{m}\zeta(2r)}+O(1). \]
Hence the zero of the gamma factor cancels the pole of
\(S_{2m-1}(0,2s+m+1)\) at \(s=0\).
Evaluating the remaining normalizing factors at \(s=0\), and using the
functional equation of the Riemann zeta function to rewrite the negative
integer zeta values, we get
\[ C_{2m-1}(Y,0)= \epsilon_m\frac{2^{6m-5}\pi^{2m}}{(2m-4)!(2m-2)!(2m)!} \frac{\zeta(m-1)}{\zeta(m+1)\zeta(2m-2)\zeta(2m)} \frac{\Xi_{2m-1}^{(2m)}(2Y,1)}{\prod_{j=2}^{2m-3}\xi(j)}. \]
\subsection*{The constant term}

Combining the preceding computations and the complementary-rank duality,
we obtain the following formula.

\begin{thm}\label{thm:constant-term}
    Assume that \(m\geq3\) is odd,
    and put
    \[ \epsilon_m=(-1)^{(m+1)/2}. \]
    Then the constant Fourier coefficient of \(E_{m+1}^{(2m)}(Z)\) is given by
    \begin{equation} A_0^{(2m)}(Y)=1+ \frac{(\epsilon_m-1)\pi^{m+1}} {\zeta(m+1)\zeta(2m)\sqrt{\det Y}} \left( \frac{\zeta(m)}{2^{m+1}m!}+ \frac{(m-2)!\zeta(m-1)}{(2m)!} \zeta_1^{(2m)}(Y,m-1) \right). \label{eq:constant-term-reduced} \end{equation}
    Here \(\zeta_1^{(2m)}(Y,m-1)\) is understood by meromorphic continuation.
\end{thm}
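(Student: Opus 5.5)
We sum the finite parts computed above, \(A_0^{(2m)}(Y)=c_{0,0}+c_{1,0}+c_{2,0}+c_{2m-1,0}+c_{2m,0}\). For \(m=3\) we also invoke Proposition~\ref{prop:degree-six-middle-rank-cancellation} to discard \(C_3+C_4\). We then pair the contributions so that each pair carries the factor \(\epsilon_m-1\).

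\textbf{First pair: \(\nu=1\) and \(\nu=2m\).} From the displayed values,
\[
C_1(Y,0)+C_{2m}(Y,0)=\frac{(\epsilon_m-1)\pi^{m+1}\zeta(m)}{2^{m+1}m!\,\zeta(m+1)\zeta(2m)\sqrt{\det Y}}.
\]
This is already the first term inside the parentheses of \eqref{eq:constant-term-reduced}.

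\textbf{Second pair: \(\nu=2\) and \(\nu=2m-1\).} This is the substantive step. We convert both \(\Xi_2^{(2m)}(2Y,\tfrac{2m-1}{2})\) and \(\Xi_{2m-1}^{(2m)}(2Y,1)\) into a multiple of \(\zeta_1^{(2m)}(Y,m-1)\).

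For \(\nu=2m-1\), we apply Proposition~\ref{prop:xi-special-values} with \(M=2m\), \(\nu=2m-1\), \(\mu=2\). Here the hypothesis \(M\ge 2\nu-1\) fails, so we need another route. We use Lemma~\ref{lem:complementary-rank-duality} instead:
\[
\zeta_{2m-1}^{(2m)}(2Y,u)=\det(2Y)^{-u}\zeta_1^{(2m)}((2Y)^{-1},u).
\]
We then track \(\Xi_{2m-1}\) through its normalizing factors at \(u=2s+1\). Concretely, we write \(\Xi_{2m-1}^{(2m)}(2Y,1)\) as the limit of \(2\varepsilon_{2m-1}(u)\varepsilon_{2m-1}(m-u)\prod\xi(2u-j)\) times \(\zeta_1\). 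The double pole of the \(\xi\)-product cancels the double zero of the \(\varepsilon\)'s, and what remains is \(\zeta_1^{(2m)}((2Y)^{-1},1)\) times explicit constants. Next we apply the functional equation of \(\Xi_1^{(2m)}\), which relates \(\zeta_1\) at argument \(1\) for \(g^{-1}\) to argument \(m-1\) for \(g\) with the factor \(\det(g)^{1/2}\). Combined with the homogeneity \(\zeta_1^{(2m)}(2Y,u)=2^{-u}\zeta_1^{(2m)}(Y,u)\), this yields \(\zeta_1^{(2m)}(Y,m-1)/\sqrt{\det Y}\).

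For \(\nu=2\), we use Proposition~\ref{prop:xi-special-values} with \(M=2m\), \(\nu=2\), \(\mu=1\), in its second form:
\[
\Xi_2^{(2m)}\Bigl(g,\tfrac{2m-1}{2}\Bigr)=c_{2m,2,1}\det(g)^{-1}\Xi_1^{(2m)}(g^{-1},1).
\]
This reduces \(\Xi_2\) to the same quantity \(\Xi_1^{(2m)}((2Y)^{-1},1)\), and by the functional equation again to \(\zeta_1^{(2m)}(Y,m-1)/\sqrt{\det Y}\) times an explicit constant.

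We then check that the two constants agree up to sign, namely that \(C_{2m-1}(Y,0)/C_2(Y,0)=\epsilon_m\) after substitution. The sum is then \((\epsilon_m-1)\) times the constant from \(C_2\) alone. Simplifying that constant with \(\xi(j)=\pi^{-j/2}\Gamma(j/2)\zeta(j)\), the duplication formula, and the value of \(c_{2m,2,1}\) (which involves \(v(1)=1\)) should give
\[
\frac{(m-2)!\,\zeta(m-1)\,\pi^{m+1}}{(2m)!\,\zeta(m+1)\zeta(2m)\sqrt{\det Y}}\,\zeta_1^{(2m)}(Y,m-1).
\]

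\textbf{Main obstacle.} The hard part is the bookkeeping in this second pair. The \(\xi\)-products \(\prod_{j=2}^{2m-3}\xi(j)\) and \(\xi(2m-1)\xi(2m-2)\) and the factorial and power-of-2 constants must collapse exactly to \((m-2)!/(2m)!\) with the relative sign \(\epsilon_m\).

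I would first verify the identity numerically for \(m=3\) and \(m=5\), using the Legendre duplication formula on the products \(\prod\Gamma(j/2)\) and Euler's formula for \(\zeta\) at even integers. Only after that would I do the general simplification.

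Finally, we add \(C_0=1\). Note that the pole of \(\zeta_1^{(2m)}(Y,u)\) lies at \(u=m\), so the value at \(m-1\) is regular and is given by meromorphic continuation, as stated.
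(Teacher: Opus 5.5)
Your route is the paper's route. You add the finite parts of $C_0,C_1,C_2,C_{2m-1},C_{2m}$, discarding $C_3+C_4$ when $m=3$. You then convert both $\Xi$-values into multiples of $\zeta_1^{(2m)}(Y,m-1)/\sqrt{\det Y}$; for $\nu=2m-1$ you do this, as the paper does, via Lemma~\ref{lem:complementary-rank-duality} and the rank-one functional equation. For $\nu=2$ the paper cites Terras's residue formula, whereas you use Proposition~\ref{prop:xi-special-values} with $(M,\nu,\mu)=(2m,2,1)$. That is legitimate: since $c_{2m,2,1}=-(2m-1)/8$, it yields exactly the paper's identity
\[
\Xi_2^{(2m)}\Bigl(2Y,\frac{2m-1}{2}\Bigr)=-\frac{(2m-1)(2m-2)}{2^{2m+2}}\,\xi(2m-2)\,\frac{\zeta_1^{(2m)}(Y,m-1)}{\sqrt{\det Y}}.
\]
Your first pair, $C_1+C_{2m}$, is also correct.

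\textbf{The gap: the ratio in your crucial check has the wrong sign.} Let $K$ denote the expression you display at the end. Your duality-plus-functional-equation computation gives $\Xi_{2m-1}^{(2m)}(2Y,1)/\prod_{j=2}^{2m-3}\xi(j)=\frac{(2m-4)!(2m-2)!}{2^{6m-5}}\,\xi(2m-2)\,\zeta_1^{(2m)}(Y,m-1)/\sqrt{\det Y}$. Substituting this and the identity above, and using $\xi(2m-2)=\pi^{1-m}(m-2)!\,\zeta(2m-2)$, one gets
\[
C_2(Y,0)=-K,\qquad C_{2m-1}(Y,0)=\epsilon_m K,\qquad \frac{C_{2m-1}(Y,0)}{C_2(Y,0)}=-\epsilon_m,\ \text{not } \epsilon_m.
\]
The sources of the signs are as follows.
\begin{enumerate}
\item On the $\nu=2$ side, the negative signs of $1/\Gamma_2(s)\sim -s/(2\pi)$ and of the polar coefficient of $\zeta_2^{(2m)}$ cancel. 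The minus in $C_2(Y,0)$ is therefore exactly the sign of $c_{2m,2,1}$.
\item On the $\nu=2m-1$ side, the signs of $\zeta(2-m)\prod_{r=1}^{m-2}\zeta(1-2r)$ multiply to $(-1)^{(m-1)^2/2}=1$. Only $(-1)^{k\nu/2}=\epsilon_m$ survives.
\end{enumerate}

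With your stated ratio, the $\zeta_1$-contribution would be $(1+\epsilon_m)C_2(Y,0)$. That vanishes precisely in the main case $m\equiv1\pmod 4$, and survives for $m\equiv3\pmod4$, which is incompatible with the holomorphy given by Proposition~\ref{prop:eisenstein}. Your final expression $(\epsilon_m-1)K$ is correct, but it does not follow from the ratio you state. This relative sign is the entire source of the factor $\epsilon_m-1$. So the bookkeeping you defer (``should give'') must actually be carried out, with $C_{2m-1}(Y,0)=-\epsilon_m C_2(Y,0)$.
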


\begin{proof}
    Summing
    the contributions of \(C_0\), \(C_1\), \(C_2\), \(C_{2m-1}\), and \(C_{2m}\),
    we obtain

    \begin{align}
        A_0^{(2m)}(Y)
         & =1+ \frac{(\epsilon_m-1)\pi^{m+1}\zeta(m)} {2^{m+1}m!\zeta(m+1)\zeta(2m)} \frac{1}{\sqrt{\det Y}} \notag                                                                                                        \\
         & \quad+ \frac{2^{2m+2}\pi^{2m}} {(2m)!(2m-1)(2m-2)} \frac{\zeta(m-1)} {\zeta(m+1)\zeta(2m-2)\zeta(2m)} \Xi_2^{(2m)} \left(2Y,\frac{2m-1}{2}\right) \notag                                                        \\
         & \quad+ \epsilon_m \frac{2^{6m-5}\pi^{2m}} {(2m-4)!(2m-2)!(2m)!} \frac{\zeta(m-1)} {\zeta(m+1)\zeta(2m-2)\zeta(2m)} \frac{\Xi_{2m-1}^{(2m)}(2Y,1)} {\prod_{j=2}^{2m-3}\xi(j)}. \label{eq:constant-term-expanded}
    \end{align}
    When \(m=3\),
    the additional middle-rank terms \(C_3\) and \(C_4\) occur,
    but their sum is zero by
    Proposition~\ref{prop:degree-six-middle-rank-cancellation}.

    By the residue formula for the Epstein zeta function \cite[\S~1.4, Theorem~1]{Terras1985},
    we obtain
    \[ \Xi_2^{(2m)}\left(2Y,\frac{2m-1}{2}\right)= -\frac{(2m-1)(2m-2)}{2^{2m+2}}\xi(2m-2) \frac{\zeta_1^{(2m)}(Y,m-1)}{\sqrt{\det Y}}. \]
    By Lemma~\ref{lem:complementary-rank-duality} and the functional equation
    for the rank-one completed Koecher--Maass zeta function,
    we also have
    \[ \frac{\Xi_{2m-1}^{(2m)}(2Y,1)}{\prod_{j=2}^{2m-3}\xi(j)}= \frac{(2m-4)!(2m-2)!}{2^{6m-5}}\xi(2m-2) \frac{\zeta_1^{(2m)}(Y,m-1)}{\sqrt{\det Y}}. \]
    Substituting these identities into \eqref{eq:constant-term-expanded},
    we obtain the reduced formula \eqref{eq:constant-term-reduced}.
\end{proof}

\begin{rem}
    If \(m\equiv3\pmod4\), then \(\epsilon_m=1\), and  \(A_0^{(2m)}(Y)=1\).
    This is compatible with the fact that \(E_k^{(n)}(Z)\) is holomorphic in this case. \(\diamond\)
\end{rem}

\begin{rem}[The degree two case]
    When \(n=2\),
    the boundary weight is \(k=2\).
    This case is exceptional in a stronger sense than the higher degree cases
    considered in this paper.
    In fact,
    writing
    \[
        Y=
        \begin{pmatrix}
            y_{11} & y_{12} \\
            y_{12} & y_{22}
        \end{pmatrix}
        \in\bbS_2(\bbR)_{>0},
    \]
    the constant Fourier coefficient of \(E_2^{(2)}(Z,0)\) is
    \[ A_0^{(2)}(Y)=1- \frac{18}{\pi^2\sqrt{\det Y}} \left( 1+\frac{\gamma_E}{2} +\frac12\log\frac{y_{22}}{4\pi}- \log \left| \eta\left( \frac{y_{12}+\sqrt{-1}\sqrt{\det Y}}{y_{22}} \right) \right|^2 \right), \]
    where \(\gamma_E\) is Euler's constant and \(\eta\) is the Dedekind eta
    function.
    The logarithmic term comes from the first Kronecker limit formula.
    Thus the degree-two formula is not a specialization of
    Theorem~\ref{thm:constant-term};
    the zero-pole pattern in Mizumoto's expansion is different when \(n=2\). \(\diamond\)
\end{rem}

\section{Non-zero Index Terms}
\label{sec:nonzero-coefficients}

In this section,
we calculate the Fourier coefficients indexed by non-zero matrices in the exceptional case.
Throughout the section,
we assume that \(n=2m\), \(m\geq5\), \(m\equiv1\pmod4\).
Thus \(k=m+1\equiv2\pmod4\),
and \(n\geq10\).

\subsection{Index Sets and Scalar Factors}

Before treating the Fourier coefficients for non-zero indices,
we define the index sets and scalar factors used below.

For \(1\leq\lambda<\nu\leq n\),
put \(d=\nu-\lambda\).
We define
\begin{align*}
    \calC^{(n)}_{\lambda,\nu}(s)
    = & \Gamma_d\left(2s+\frac{n+1-\nu}{2}\right)\cdot\Gamma_d(s)^{-1}                                                    \\
      & \times\left( \varepsilon_d\left(2s+\frac{n+1-\nu}{2}\right) \varepsilon_d\left(-2s-\frac{\lambda+1-\nu}{2}\right)
    \prod_{j=0}^{d-1}\xi(4s+n+1-\nu-j) \right)^{-1}.
\end{align*}
We also put
\[ \calD^{(n)}_{\lambda,\nu}(s)=\calC^{(n)}_{\lambda,\nu}(s) \calZ_{\lambda,\nu}(2s+m+1), \]
where
\[ \calZ_{\lambda,\nu}(s)= \frac{\zeta(s+\lambda-\nu)}{\zeta(s)} \prod_{j=1}^{\nu-\lambda}\frac{\zeta(2s-\nu-j)}{\zeta(2s-2j)}. \]
Finally,
set
\[ \rho^{(n)}_{\lambda,\nu}=\ord_{s=0}\calD^{(n)}_{\lambda,\nu}(s). \]

\begin{lem}\label{lem:scalar-factor-order}
    We have \(\rho^{(n)}_{\lambda,\nu}\leq0\) precisely in the following cases:
    \[
        \begin{array}{c|c|c}
            \lambda              & \nu                           & \rho^{(n)}_{\lambda,\nu} \\ \hline
            n-1                  & n                             & 0                        \\
            n-2                  & n-1                           & 0                        \\
            n-2                  & n                             & -1                       \\
            n-3                  & n-2,n-1,n                     & 0                        \\
            n-4                  & n-3,n-2,n-1,n                 & 0                        \\
            n-5,\ n-6            & \lambda+1,\lambda+2,\lambda+4 & 0                        \\
            2\leq\lambda\leq n-7 & \lambda+1,\lambda+2           & 0                        \\
            1                    & 2,3,n                         & 0
        \end{array}
    \]
    In degree \(n=10\),
    one additionally has
    \[ \rho^{(10)}_{\lambda,\lambda+5}=0 \qquad (2\leq\lambda\leq5). \]
    Moreover,
    \[ \rho^{(n)}_{n-6,n-3}=1,\qquad \rho^{(n)}_{n-8,n-4}=1, \]
    whenever these pairs occur.
    All remaining pairs have positive order.
\end{lem}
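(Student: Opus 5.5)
The plan is to compute \(\rho^{(n)}_{\lambda,\nu}\) as a sum of elementary orders. Each factor of \(\calD^{(n)}_{\lambda,\nu}(s)\) is either a product of gamma values at points \(as+b\), a polynomial \(\varepsilon_d\), or a \(\xi\)- or \(\zeta\)-value at an affine function of \(s\). For each factor I will write its order at \(s=0\) as an explicit function of \(m\), \(\lambda\), \(d=\nu-\lambda\) (with \(n=2m\), \(m\equiv1\pmod4\)), add these orders, and then do casework.

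\textbf{Step 1: the individual orders}, with \(\tau=\tau(\nu)=(2m+1-\nu)/2\).
\begin{enumerate}
\item \(\Gamma_d(s)^{-1}\): the factor \(\Gamma(s-j/2)\) has a pole at \(s=0\) exactly when \(j\) is even, so this contributes \(+\lceil d/2\rceil\).
\item \(\Gamma_d(2s+\tau)\): the factor \(\Gamma(\tau-j/2)\) has a pole when \(j\ge 2m+1-\nu\) and \(j\equiv 2m+1-\nu\pmod 2\), with \(j\le d-1\). This contributes \(-\#\{\,j\,\}\), which is nonzero only when \(2\nu-\lambda\ge 2m+2\).
\item \(\varepsilon_d(2s+\tau)^{-1}\): this contributes \(-1\) exactly when \(2m+1-\nu\le d-1\).
\item \(\varepsilon_d\bigl((d-1)/2-2s\bigr)^{-1}\): this always contributes \(-1\), from the root \(a=d-1\).
\item \(\prod_j\xi(4s+2m+1-\nu-j)^{-1}\): the arguments run from \(2m+1-\nu\) down to \(2m+2-2\nu+\lambda\). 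Each of the values \(1\) and \(0\) lying in this range contributes \(+1\); the completed \(\xi\) has no other zeros or poles here.
\item \(\calZ_{\lambda,\nu}(2s+m+1)\), checked term by term:
\begin{itemize}
\item \(\zeta(2s+m+1-d)\) gives \(+1\) if \(m+1-d\) is a negative even integer, and \(-1\) if \(m+1-d=1\).
\item \(\zeta(2s+m+1)^{-1}\) gives \(0\).
\item \(\prod_{j=1}^{d}\zeta(4s+2m+2-\nu-j)\), with arguments from \(2m+1-\nu\) to \(2m+2-2\nu+\lambda\), gives \(+1\) for each negative even argument and \(-1\) if the argument \(1\) occurs.
\item \(\prod_{j=1}^{d}\zeta(4s+2m+2-2j)^{-1}\) has even arguments and gives \(-1\) for each negative even argument \(2m+2-2j\le -2\), i.e. \(j\ge m+2\).
\end{itemize}
\end{enumerate}

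\textbf{Step 2: closed formula.} Summing the contributions in Step 1 gives \(\rho\) as a piecewise-linear expression involving floors. The natural regimes are
\begin{itemize}
\item \(\nu\le m\): the \(\xi\)-range contains no singular points and no gamma poles occur;
\item \(m<\nu\), with the \(\xi\)-range covering \(0,1\) (this is the condition from Corollary~\ref{cor:koecher-maass-zeta-poles}, extended to the Schur-complement setting);
\item small versus large \(d\) relative to \(m+1\), which governs the \(\zeta(2s+m+1-d)\) and \(\zeta(2m+2-2j)\) terms.
\end{itemize}
In each regime the counts of negative even integers in an interval of known endpoints are explicit. I expect \(\rho\) to grow roughly like \(\lceil d/2\rceil\) minus a bounded correction, minus an extra amount near \(\nu=n\).

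\textbf{Step 3: locating \(\rho\le 0\).} First I show that \(\rho\ge1\) once \(d\ge3\) (apart from the listed exceptions), because \(\lceil d/2\rceil\) dominates. Then I check \(d=1,2\) directly, which yields the generic rows \(\nu=\lambda+1,\lambda+2\). Near the top, \(\lambda\ge n-8\), I tabulate all \(\nu\) by hand, since there the gamma-pole and \(\xi(0),\xi(1)\) terms interact. This produces the rows \(\lambda=n-1,\dots,n-6\), the value \(\rho_{n-2,n}=-1\), and the two boundary cases \(\rho_{n-6,n-3}=\rho_{n-8,n-4}=1\). The row \(\lambda=1\), \(\nu=n\) comes from the full range in which all cancellations align.

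Finally, the degree \(n=10\) anomaly should come from \(d=5=m\). In that case \(m+1-d=1\), so \(\zeta(2s+1)\) contributes a pole \(-1\), lowering \(\rho_{\lambda,\lambda+5}\) to \(0\). For \(m\ge9\) this coincidence instead falls at \(d=m\), where \(\lceil d/2\rceil\) is large enough to keep \(\rho>0\).

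\textbf{Main obstacle.} The main difficulty is the bookkeeping in the top-corner regime. There, up to five mechanisms (the gamma poles, the \(\varepsilon_d(2s+\tau)\) zero, \(\xi(0)\), \(\xi(1)\), and the zero or pole of \(\zeta(2s+m+1-d)\)) overlap with parity conditions. I would handle this by writing \(\nu=n-a\) and \(\lambda=n-a-d\) with small \(a\), and computing each count as a function of \((a,d)\) uniformly in \(m\), using \(m\) odd to fix the parities. A single small table in \((a,d)\) then gives all listed entries at once.
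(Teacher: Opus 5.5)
Your route is the same as the paper's: a direct count of zeros and poles of the factors of \(\calD^{(n)}_{\lambda,\nu}\). Every individual order in your Step~1 is correct. The gap is in Steps~2--3, where two claims are false once \(d\geq m+2\):
\begin{itemize}
\item the expectation that \(\rho^{(n)}_{\lambda,\nu}\) is ``\(\lceil d/2\rceil\) minus a bounded correction'';
\item the argument that \(\rho\geq1\) for \(d\geq3\) ``because \(\lceil d/2\rceil\) dominates''.
\end{itemize}
In that range the factor \(\prod_{j=1}^{d}\zeta(4s+2m+2-2j)^{-1}\) contributes \(-(d-m-1)\). This outgrows \(\lceil d/2\rceil\), even after adding the \(+1\) from \(\zeta(2s+m+1-d)\) at even \(d\). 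So on this range \(\rho\) is non-increasing in \(d\), and it reaches \(0\) exactly at \(d=2m-1\), i.e.\ at \((\lambda,\nu)=(1,n)\).

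Consequently \((1,n)\) is the endpoint of a decreasing family, not an isolated coincidence. Positivity for \(m+2\leq d\leq 2m-2\) comes from the constraint \(d\leq n-1\), not from dominance. Your proposed top-corner table in small \((a,d)\) never sees this regime. Relatedly, the \(n=10\) anomaly is not produced by the pole of \(\zeta(2s+1)\) alone: one has \(\rho^{(10)}_{1,6}=1\).

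The fix is to actually carry out the sum, which telescopes. Four factors all run over the same integer arguments \(x\in[\beta,\,n+1-\nu]\), where \(\beta=n+2-2\nu+\lambda\):
\begin{itemize}
\item \(\Gamma_d(2s+\tau)\);
\item \(\varepsilon_d(2s+\tau)\);
\item the \(\xi\)-product;
\item the \(\zeta\)-product in the numerator of \(\calZ\).
\end{itemize}
At each even \(x\leq-2\), the gamma pole cancels the zeta zero. At \(x=1\), the zero of \(\xi^{-1}\) cancels the zeta pole. At \(x=0\), the gamma pole and the pole of \(\varepsilon_d(2s+\tau)^{-1}\) against the zero of \(\xi(4s)^{-1}\) leave a net \(-1\). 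Hence
\[ \rho^{(n)}_{\lambda,\nu}=\lceil d/2\rceil-1-\mathbf{1}_{\beta\leq0}-\mathbf{1}_{d=m}+\mathbf{1}_{d\geq m+3,\ d\ \mathrm{even}}-\max(0,d-m-1). \]
Reading off the cases:
\begin{itemize}
\item For \(d\leq m-1\) this is \(\lceil d/2\rceil-1-\mathbf{1}_{\beta\leq0}\). Here \(\beta\leq0\) means \(\lambda=n-2\) when \(d=2\), \(\lambda\geq n-4\) when \(d=3\), and \(\lambda\geq n-6\) when \(d=4\). This gives all rows with \(d\leq4\), and \(\rho\geq1\) for \(5\leq d\leq m-1\). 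It also gives the two order-one cases \((n-6,n-3)\) and \((n-8,n-4)\), where \(\beta=2\).
\item For \(d=m\) one has \(\beta=2-\lambda\), so \(\rho=(m-3)/2-\mathbf{1}_{\lambda\geq2}\). This vanishes only for \(m=5\) and \(2\leq\lambda\leq5\).
\item For \(d\geq m+1\), \(\beta\leq-1\) automatically. This gives \(\rho=(m-3)/2\) at \(d=m+1\), and \(\rho=(m-1)/2-\lceil(d-m-1)/2\rceil\) for \(d\geq m+2\), which is \(\geq1\) except at \(d=2m-1\).
\end{itemize}
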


\begin{proof}
    The assertion follows by counting the zeros and poles of the factors defining
    \(\calD^{(n)}_{\lambda,\nu}(s)\).
    The possible poles come from the zeta factors whose arguments become \(1\) at \(s=0\),
    while the zeros come from zeta factors whose arguments become negative even integers and from gamma factors.
    Combining these elementary order counts gives exactly the table above.
\end{proof}

We define \(\calI_{\lambda}^{\reg}=\calI_{n,\lambda}^{\reg}\) to be the set of
all integers \(\nu\) with \(\lambda<\nu\leq n\) such that
\((\lambda,\nu)\) appears in the table of
Lemma~\ref{lem:scalar-factor-order} and satisfies
\(\rho^{(n)}_{\lambda,\nu}\leq 0\).
In degree \(n=10\), we include the additional cases
\(\nu=\lambda+5\) stated in the same lemma.

We shall also need to keep the exceptional order-one cases in which
the zero of the scalar factor may be cancelled by the pole of the
non-degenerate Siegel series.
For this purpose, put
\[
    \calI_{\lambda}^{\sq}=
    \begin{cases}
        \{n-3\},   & \lambda=n-6,      \\
        \{n-4\},   & \lambda=n-8,      \\
        \emptyset, & \text{otherwise}.
    \end{cases}
\]
We then set
\[ \calI_{\lambda}^{(n)}= \calI_{\lambda}^{\reg}\cup \calI_{\lambda}^{\sq} \qquad (1\leq\lambda<n), \]
and finally define
\[
    \calJ_{\lambda}^{(n)}=
    \begin{cases}
        \{n\},                                & \lambda=n,      \\
        \{\lambda\}\cup\calI_{\lambda}^{(n)}, & 1\leq\lambda<n.
    \end{cases}
\]

\subsection{Positive Semi-definite Indices}

We now assume that \(T\geq0\) (positive semi-definite)
and \(T\neq0\).
Let \(\lambda=\rank(T)\).
If \(\lambda=n\),
then \(T>0\) (positive definite),
and we set \(h=T\) and \(r=1_n\).
If \(\lambda<n\),
choose \(h\in\Lambda_\lambda^\ast\) and \(r\in M_{n,\lambda}(\bbZ)^{\prim}\)
such that \(T=h[{}^tr]\).
The formulas below are independent of this choice.
In both cases,
we have \(h>0\).
For \(\lambda\leq\nu\leq n\),
put \(u_{\lambda,\nu}=m+1-\nu+\lambda\).

We denote by \(A_{T,\nu}^{(n)}(Y,s)\) the contribution of
\(F_{\nu,\lambda}^{(n)}(Z,s)\) to the \(T\)-th Fourier coefficient.
Thus
\[ A_T^{(n)}(Y,s)=\sum_{\nu=\lambda}^{n}A_{T,\nu}^{(n)}(Y,s). \]

\begin{prop}\label{prop:semidefinite-vanishing}
    Assume that \(T\geq0\),
    \(T\neq0\),
    and \(\rank(T)=\lambda<n\).
    If \(\lambda<\nu\leq n\) and \(\nu\notin\calI_{\lambda}^{(n)}\),
    then
    \[ A_{T,\nu}^{(n)}(Y,s)=O(s) \quad (s\to0). \]
    Hence this term does not contribute to the constant term at \(s=0\).
\end{prop}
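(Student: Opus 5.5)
We write the term \(A_{T,\nu}^{(n)}(Y,s)\) as a product of four pieces: the scalar factor \(\calD^{(n)}_{\lambda,\nu}(s)\), the non-degenerate Siegel series of \(h\), a confluent hypergeometric factor, and a completed Koecher--Maass factor. We then show that the order of vanishing at \(s=0\) is at least \(\rho^{(n)}_{\lambda,\nu}\) plus non-negative corrections. The conclusion is then read off from Lemma~\ref{lem:scalar-factor-order}.

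\textbf{Step 1: factorization.} By Proposition~\ref{prop:mizumoto}, \(A_{T,\nu}^{(n)}(Y,s)\) is the sum over pairs \((h,r)\) with \(h[{}^tr]=T\) of \(b^{(n)}_{\nu,\lambda}(T,Y,s)\), and for \(T\geq0\) this is a single \(\GL_\lambda(\bbZ)\)-orbit. We apply Proposition~\ref{prop:siegel-series-factorization}(2):
\[ S_\nu\bigl(\diag(h,0_{\nu-\lambda}),2s+k\bigr)=\calZ_{\lambda,\nu}(2s+m+1)\,S_\lambda(h,2s+u_{\lambda,\nu}). \]
We also write
\[ \Gamma_\nu(s)=\Gamma_{d}(s)\cdot(\text{factor involving }\Gamma_\lambda(s-d/2)), \qquad d=\nu-\lambda. \]
Finally we express \(\zeta^{(n-\lambda)}_{d}(2g,2s+\tau(\nu))\) through \(\Xi^{(n-\lambda)}_d\), dividing by the \(\varepsilon_d\varepsilon_d\prod\xi\) factor. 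With \(n-\lambda\) in place of \(n\), that factor is exactly the bracket in \(\calC^{(n)}_{\lambda,\nu}(s)\). This regrouping produces \(\calD^{(n)}_{\lambda,\nu}(s)\) times the following quantities:
\begin{itemize}
\item \(\Xi^{(n-\lambda)}_d\bigl(2g(Y,u_r),2s+\tau(\nu)\bigr)\), which is entire;
\item \(S_\lambda(h,2s+u_{\lambda,\nu})\);
\item \(\det(Y)^s\det(2Y[r])^{\cdots}\), which is holomorphic and non-vanishing;
\item the quotient \(\eta^\ast_\lambda(\cdots)/(\Gamma_\lambda(\cdots)\Gamma_\nu(s+k))\), which we must show is holomorphic at \(s=0\).
\end{itemize}

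\textbf{Step 2: the remaining factors.} For the \(\eta^\ast\) factor we use Shimura's result~\cite{Shimura1982}: \(\eta^\ast_\lambda(g,h;\alpha,\beta)/\bigl(\Gamma_\lambda(\alpha)\Gamma_\lambda(\beta)\bigr)\) is entire in \((\alpha,\beta)\). Here \(h>0\) and \(\beta=s-d/2\), and the \(\Gamma_\lambda(\beta)\) that we need in the denominator is supplied by the splitting of \(\Gamma_\nu(s)\); this should account for the correct gamma bookkeeping. Hence this factor is holomorphic at \(s=0\).

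For the Siegel series we use Proposition~\ref{prop:siegel-series-factorization}(1) at the point \(2s+u_{\lambda,\nu}\). The polynomial factors \(F_p\) are finite and holomorphic, and the factors \(\zeta(\cdot)^{-1}\) are holomorphic. A pole can only come from \(L(2s+u_{\lambda,\nu}-\lambda/2,\chi_h)\) with \(\lambda\) even, when the argument equals \(1\) and \(\chi_h\) is trivial, that is, when \(\disc(h)\) is a square. Otherwise \(S_\lambda(h,\cdot)\) is holomorphic at \(s=0\).

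\textbf{Step 3: conclusion.} If \(S_\lambda\) is holomorphic, the total order is at least \(\rho^{(n)}_{\lambda,\nu}\), which is \(\geq1\) whenever \(\nu\notin\calI^{\reg}_\lambda\) by Lemma~\ref{lem:scalar-factor-order}; this gives \(O(s)\).

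If \(S_\lambda\) has a pole, it is simple, and it occurs only when \(u_{\lambda,\nu}-\lambda/2=1\), i.e.\ \(m-\nu+\lambda/2=0\), so \(\nu=m+\lambda/2\). We must then check that for every such pair with \(\nu\notin\calI^{(n)}_\lambda\), either \(\rho\geq2\), or the pair lies in \(\calI^{\reg}\), or it is exactly one of \((n-6,n-3)\) or \((n-8,n-4)\), which are the \(\rho=1\) cases placed in \(\calI^{\sq}\). For example, \(\lambda=n-6\) gives \(\nu=m+m-3=n-3\), as expected, and \(\lambda=n-8\) gives \(\nu=n-4\).

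\textbf{Main obstacle.} The hardest step is this final cross-check: confirming that along the whole line \(\nu=m+\lambda/2\), outside the listed exceptions, \(\rho^{(n)}_{\lambda,\nu}\geq2\). To settle it we recount the zeros of \(\calD^{(n)}_{\lambda,\nu}\) directly on this line, using the explicit zeta arguments in \(\calZ_{\lambda,\nu}\) and the gamma zeros of \(\Gamma_d(2s+\tau)/\Gamma_d(s)\), which contributes a zero of order \(\lceil d/2\rceil\).

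A second point to verify is that Shimura's entireness statement applies with the exact normalization used in Proposition~\ref{prop:mizumoto}, so that no hidden gamma pole remains in the \(\eta^\ast\) factor.
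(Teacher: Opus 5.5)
Your proposal follows the paper's proof step for step. You factor off $S_\lambda(h,2s+u_{\lambda,\nu})$, bound the remaining factor by $\rho^{(n)}_{\lambda,\nu}$ from Lemma~\ref{lem:scalar-factor-order}, and absorb the square-discriminant pole on $\nu=m+\lambda/2$ into $\calI^{\sq}_\lambda$. It breaks at your Step~2 claim that ``the factors $\zeta(\cdot)^{-1}$ are holomorphic'', i.e.\ that apart from $L(\cdot,\chi_h)$ the Siegel series has no pole at $s=0$. In Proposition~\ref{prop:siegel-series-factorization}(1), the factors $\zeta(t)^{-1}$ and $\zeta(2t-2j)^{-1}$ have poles at the trivial zeros of $\zeta$. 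Here $t=2s+u_{\lambda,\nu}$ with $u_{\lambda,\nu}=m+1-(\nu-\lambda)$, which is $\le0$ once $\nu-\lambda\ge m+1$.

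Concretely, take $n=10$, $\lambda=3$, $\nu=10$. Then $u_{3,10}=-1$, and $S_3(h,2s-1)=\zeta(2s-1)^{-1}\zeta(4s-4)^{-1}\prod_pF_p(h,p^{1-2s})$ has a simple pole at $s=0$; there is no $L$-factor to offset it, since $\lambda$ is odd. One computes $\rho^{(10)}_{3,10}=1$, consistent with the lemma since $10\notin\calI^{(10)}_3=\{4,5,8\}$, so your bound only gives order $\ge0$. A direct count (assuming, as is generic, that the holomorphic factors below are non-zero at $s=0$) gives order exactly $0$:
\begin{itemize}
\item $1/\Gamma_{10}(s)$ vanishes to order $5$;
\item $\Gamma_7(2s+\frac12)$ has a pole of order $3$;
\item the factor $\Gamma_3(s-\frac72)$ inside $\eta_3$ has a simple pole;
\item $S_{10}(\diag(h,0_7),2s+6)$ has a simple pole, coming from $\zeta(1+4s)$;
\item $\eta_3/\Gamma_3(s-\frac72)$, a Riesz distribution evaluated on a smooth function, and $\zeta^{(7)}_7(2g,2s+\frac12)=\det(2g)^{-2s-1/2}$ are holomorphic at $s=0$.
\end{itemize}
The same happens for $(\lambda,\nu)=(3,9)$, where $\zeta(4s-2)^{-1}$ supplies the pole. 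It also happens for $(2,9)$ and $(2,10)$, where $L(\cdot,\chi_h)$ cancels at most one of the poles.

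The paper's proof rests on the same assertion (``the only possible pole of $S_\lambda(h,2s+u_{\lambda,\nu})$ at $s=0$ occurs when $\lambda$ is even, the discriminant of $h$ is a square, \dots''). So the gap is inherited, not introduced by you. It is also not the cross-check along $\nu=m+\lambda/2$ that you single out as the main obstacle.

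The structural problem is the splitting $\calD^{(n)}_{\lambda,\nu}(s)\cdot S_\lambda(h,2s+u_{\lambda,\nu})$. It puts $\zeta(2s+u_{\lambda,\nu})$ into $\calZ_{\lambda,\nu}(2s+m+1)$ and its reciprocal into $S_\lambda$. It also leaves the denominators $\zeta(4s+2u_{\lambda,\nu}-2j)$, $1\le j\le\lfloor\lambda/2\rfloor$, of $S_\lambda$ outside the count defining $\rho$. Hence $\rho^{(n)}_{\lambda,\nu}$ alone does not bound the order.

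A correct count must keep $\calZ_{\lambda,\nu}(2s+m+1)S_\lambda(h,2s+u_{\lambda,\nu})=S_\nu(\diag(h,0_{\nu-\lambda}),2s+m+1)$ together. After cancellation its denominator is $\zeta(t)\prod_{j=1}^{\lfloor\nu/2\rfloor}\zeta(2t-2j)$, which is non-zero at $t=m+1$. The examples above then show that the set of $\nu$ contributing at $s=0$ is not contained in $\{\lambda\}\cup\calI^{(n)}_\lambda$. So the statement itself needs enlarged index sets before any argument of this type can go through.
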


\begin{proof}
    The \((\nu,\lambda)\)-term factors into the non-degenerate Siegel series
    \(S_\lambda(h,2s+u_{\lambda,\nu})\) and a remaining factor whose zeta part is
    \(\calD^{(n)}_{\lambda,\nu}(s)\).
    If \(\nu\notin\calI_{n,\lambda}^{\reg}\),
    then Lemma~\ref{lem:scalar-factor-order} gives
    \[ \ord_{s=0}\calD^{(n)}_{\lambda,\nu}(s)>0. \]

    The only possible pole of \(S_\lambda(h,2s+u_{\lambda,\nu})\) at \(s=0\) occurs when \(\lambda\) is even,
    the discriminant of \(h\) is a square,
    and \(u_{\lambda,\nu}=\lambda/2+1\).
    This condition is equivalent to \(\nu=m+\lambda/2\).
    Among the pairs for which \(\ord_{s=0}\calD^{(n)}_{\lambda,\nu}(s)=1\),
    this happens precisely for \((\lambda,\nu)=(n-6,n-3)\) or \((\lambda,\nu)=(n-8,n-4)\).
    These are exactly the indices added in \(\calI_{n,\lambda}^{\sq}\).
    Therefore,
    if \(\nu\notin\calI_{\lambda}^{(n)}\),
    the term remains \(O(s)\),
    even after taking into account the possible pole of the non-degenerate Siegel series.
\end{proof}

For each \(\nu\in\calJ_{\lambda}^{(n)}\),
we define \(\Phi_{\lambda,\nu}^{(n)}(Y,h,r;s)\) by extracting the Siegel series
\(S_\lambda(h,2s+u_{\lambda,\nu})\) from the \((\nu,\lambda)\)-term.

For \(\nu=\lambda\),
set
\[ \Phi_{\lambda,\lambda}^{(n)}(Y,h,r;s)=(-1)^{k\lambda/2}2^\lambda\pi^{\lambda\kappa(\lambda)} \frac{\det(Y)^s}{\Gamma_\lambda(s)\Gamma_\lambda(s+m+1)} \eta_\lambda \left( 2Y[r], \pi h; s+m+1, s \right). \]
For \(\lambda<\nu\),
set
\begin{align*}
    B_{\lambda,\nu}^{(n)}(s)
     & =(-1)^{k\nu/2}2^{\nu-1}\pi^{\nu\kappa(\nu)} \frac{\Gamma_{\nu-\lambda}\left(2s+\frac{n+1-\nu}{2}\right)}{\Gamma_{\nu-\lambda}(s)} \frac{1}{\Gamma_\nu(s+m+1)} \\
     & \quad\times \varepsilon_{\nu-\lambda}\left(2s+\frac{n+1-\nu}{2}\right)^{-1} \varepsilon_{\nu-\lambda}\left(-2s-\frac{\lambda+1-\nu}{2}\right)^{-1}            \\
     & \quad\times \prod_{j=0}^{\nu-\lambda-1} \xi(4s+n+1-\nu-j)^{-1},
\end{align*}
and define
\begin{align*}
    \Phi_{\lambda,\nu}^{(n)}(Y,h,r;s)
     & =B_{\lambda,\nu}^{(n)}(s) \calZ_{\lambda,\nu}(2s+m+1)\det(Y)^s\det(2Y[r])^{(\lambda-\nu)/2}                                                                                \\
     & \quad\times \frac{\eta_\lambda \left( 2Y[r], \pi h; s+m+1+\frac{\lambda-\nu}{2}, s+\frac{\lambda-\nu}{2} \right)} {\Gamma_\lambda \left( s+\frac{\lambda-\nu}{2} \right) } \\
     & \quad\times \Xi_{\nu-\lambda}^{(n-\lambda)} \left( 2g(Y,u_r), 2s+\frac{n+1-\nu}{2} \right).
\end{align*}
With this notation,
\[ A_{T,\nu}^{(n)}(Y,s)=\Phi_{\lambda,\nu}^{(n)}(Y,h,r;s) S_\lambda(h,2s+u_{\lambda,\nu}) \]
for all \(\nu\in\calJ_{\lambda}^{(n)}\).

If \(\Phi_{\lambda,\nu}^{(n)}(Y,h,r;s)\) is holomorphic at \(s=0\),
we write
\[ \Phi_{\lambda,\nu}^{(n)}(Y,h,r)=\Phi_{\lambda,\nu}^{(n)}(Y,h,r;0). \]
If \((\lambda,\nu)=(n-2,n)\),
then \(\Phi_{\lambda,\nu}^{(n)}(Y,h,r;s)\) has a simple pole at \(s=0\).
We write
\[ \Phi_{n-2,n}^{(n)}(Y,h,r;s)= \frac{\Phi_{n-2,n}^{(n),-1}(Y,h,r)}{s}+\Phi_{n-2,n}^{(n),0}(Y,h,r)+O(s). \]
If
\[ \Phi_{\lambda,\nu}^{(n)}(Y,h,r;s)=s\Phi_{\lambda,\nu}^{(n),1}(Y,h,r)+O(s^2), \]
we use this notation for the leading coefficient.

We also need the following notation for the possible pole of the non-degenerate Siegel series.
Assume that \(\lambda\) is even.
Let \(\chi_h\) be the quadratic character attached to \(\disc(h)\).
Near \(u=\lambda/2+1\),
we write
\[ S_\lambda(h,u)=L\left(u-\frac{\lambda}{2},\chi_h\right)\calP_h(u), \]
where \(\calP_h(u)\) is holomorphic at \(u=\lambda/2+1\).
If the discriminant of \(h\) is a square with our convention,
then
\[ S_\lambda(h,u)=\zeta\left(u-\frac{\lambda}{2}\right)\calP_h(u). \]

\begin{prop}\label{prop:nonzero-coefficient-contribution}
    Assume that \(T\geq0\),
    \(T\neq0\),
    and \(\rank(T)=\lambda\).
    Let \(h\) and \(r\) be as above.
    Then
    \[ A_T^{(n)}(Y)=\sum_{\nu\in\calJ_{\lambda}^{(n)}} A_{T,\nu}^{(n)}(Y). \]
    For \(\nu\in\calJ_{\lambda}^{(n)}\),
    the term \(A_{T,\nu}^{(n)}(Y)\) is given as follows.

    \begin{enumerate}
        \item
              If \((\lambda,\nu)=(n-2,n)\),
              then
              \[ A_{T,n}^{(n)}(Y)=\Phi_{n-2,n}^{(n),0}(Y,h,r)S_{n-2}(h,m-1) +2\Phi_{n-2,n}^{(n),-1}(Y,h,r)S_{n-2}'(h,m-1). \]

        \item
              Suppose that \(\lambda\) is even,
              \(\disc(h)\) is a square,
              and \(\nu=m+\lambda/2\).
              If \(n-\lambda=0,2,4\),
              then
              \begin{align*}
                  A_{T,\nu}^{(n)}(Y)
                   & =\Phi_{\lambda,\nu}^{(n)}(Y,h,r) \left\{ \calP_h'(u_{\lambda,\nu})+ \gamma_E\calP_h(u_{\lambda,\nu}) \right\} \\
                   & \quad+ \frac12 \left. \frac{d}{ds} \Phi_{\lambda,\nu}^{(n)}(Y,h,r;s) \right|_{s=0} \calP_h(u_{\lambda,\nu}).
              \end{align*}
              If \(n-\lambda=6,8\),
              then
              \[ A_{T,\nu}^{(n)}(Y)=\frac12\Phi_{\lambda,\nu}^{(n),1}(Y,h,r)\calP_h(u_{\lambda,\nu}). \]
              If \(n-\lambda\geq10\),
              then
              \[ A_{T,\nu}^{(n)}(Y)=0. \]

        \item
              In all remaining cases with \(\nu\in\calJ_{\lambda}^{(n)}\),
              we have
              \[ A_{T,\nu}^{(n)}(Y)=\Phi_{\lambda,\nu}^{(n)}(Y,h,r) S_\lambda(h,u_{\lambda,\nu}). \]
    \end{enumerate}
\end{prop}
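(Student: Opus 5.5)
We start from the decomposition \(A_T^{(n)}(Y,s)=\sum_{\nu=\lambda}^{n}A_{T,\nu}^{(n)}(Y,s)\), which is the \(T\)-th coefficient of Mizumoto's expansion (Proposition~\ref{prop:mizumoto}). Since \(A_T^{(n)}(Y,s)\) is holomorphic at \(s=0\), we have \(A_T^{(n)}(Y)=\sum_\nu c_{\nu,0}\), where \(c_{\nu,0}\) is the \(s^0\)-coefficient of the Laurent expansion of \(A_{T,\nu}^{(n)}(Y,s)\). By Proposition~\ref{prop:semidefinite-vanishing}, the terms with \(\nu\notin\calJ_\lambda^{(n)}\) are \(O(s)\) and can be dropped. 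This gives the first assertion; when \(\lambda=n\) only \(\nu=n\) occurs. For \(\nu\in\calJ_\lambda^{(n)}\) we use the factorization \(A_{T,\nu}^{(n)}(Y,s)=\Phi_{\lambda,\nu}^{(n)}(Y,h,r;s)S_\lambda(h,2s+u_{\lambda,\nu})\). It is obtained from Proposition~\ref{prop:mizumoto} in three steps: rewrite \(\zeta_{\nu-\lambda}^{(n-\lambda)}\) through \(\Xi_{\nu-\lambda}^{(n-\lambda)}\); rewrite \(\eta^\ast\) as \(\det(\cdot)^{\ldots}\eta\); and apply Proposition~\ref{prop:siegel-series-factorization}(2) to split off \(\calZ_{\lambda,\nu}(2s+m+1)\). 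The task is then to read off the constant Laurent coefficient of a product of two factors whose orders at \(s=0\) we control.

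The next step is to determine \(\ord_{s=0}\Phi_{\lambda,\nu}^{(n)}\). Its scalar zeta/gamma part is \(\calD^{(n)}_{\lambda,\nu}(s)\), up to factors holomorphic and non-vanishing at \(0\). These are powers of \(2,\pi\), the gamma factor \(1/\Gamma_\nu(s+m+1)\), and \(\Gamma_\lambda\)-normalizations; the factor \(\Gamma_\lambda(s)^{-1}\) is absorbed in \(\calC\) only when \(\nu=\lambda\), which we treat separately. The remaining factors are holomorphic: \(\Xi\) is entire, and the normalized \(\eta/\Gamma_\lambda\) is holomorphic by Shimura's theory. Hence \(\ord_{s=0}\Phi=\rho^{(n)}_{\lambda,\nu}\), read from Lemma~\ref{lem:scalar-factor-order}. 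The main obstacle here is that generic holomorphy does not give the exact order. It gives only a lower bound, which is all that is needed except in the cases where a leading coefficient appears in the formula; there we only need to name the coefficient, not prove it is non-zero.

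Next we locate the pole of \(S_\lambda(h,2s+u_{\lambda,\nu})\). By Proposition~\ref{prop:siegel-series-factorization}(1), with the argument near \(u_{\lambda,\nu}\ge 2\), the \(\zeta(s)^{-1}\zeta(2s-2j)^{-1}\) factors give no pole unless an argument equals \(1\), and these are controlled by the choice of \(u\). The only possible pole is from \(L(u-\lambda/2,\chi_h)\) at \(u=\lambda/2+1\) with \(\chi_h\) trivial. This pole requires \(\nu=m+\lambda/2\) and is simple, with \(L=\zeta(1+2s)=\frac{1}{2s}+\gamma_E+O(s)\). We then do the cases.

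(1) For \((\lambda,\nu)=(n-2,n)\) we have \(u=m-1\), and \(m+\lambda/2=2m-1\neq n\), so \(S_{n-2}\) is holomorphic. Multiplying \(\Phi^{-1}/s+\Phi^0\) by \(S(m-1)+2sS'(m-1)\) and taking the \(s^0\)-coefficient gives the formula.

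(2) In the square-discriminant case we write \(S=\bigl(\frac1{2s}+\gamma_E+O(s)\bigr)\bigl(\calP_h+2s\calP_h'+O(s^2)\bigr)\).
For \(n-\lambda=0,2,4\), the pair \((\lambda,m+\lambda/2)\) lies in \(\calI^{\reg}\) with \(\rho=0\). Here the case \(\lambda=n\) needs the check that the \(\nu=\lambda\) term is holomorphic and non-vanishing: \(\Gamma_\lambda(s)^{-1}\eta_\lambda(\cdot;s+m+1,s)\) is holomorphic. Expanding \(\Phi(0)+s\Phi'(0)\) against the series for \(S\), the \(s^0\)-coefficient is \(\Phi(\calP_h'+\gamma_E\calP_h)+\tfrac12\Phi'(0)\calP_h\).
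For \(n-\lambda=6,8\) we have \(\nu\in\calI^{\sq}\), \(\rho=1\), so only \(\frac12\Phi^{(n),1}\calP_h\) survives.
For \(n-\lambda\ge10\), \(\nu=m+\lambda/2\) has \(\rho\ge2\), or lies outside \(\calJ\) and the term was already shown to be \(O(s)\); in either case it is \(O(s)\) and contributes \(0\).

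(3) In the remaining cases both factors are holomorphic at \(0\), and the value is \(\Phi(0)S_\lambda(h,u_{\lambda,\nu})\). The most delicate bookkeeping is checking that no case in \(\calJ\) has \(\rho=-1\) together with a Siegel-series pole; this follows from \(2m-1\neq n\).
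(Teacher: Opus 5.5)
Your overall route is the paper's. You drop \(\nu\notin\calJ_\lambda^{(n)}\) by Proposition~\ref{prop:semidefinite-vanishing}, factor \(A_{T,\nu}^{(n)}(Y,s)=\Phi_{\lambda,\nu}^{(n)}(Y,h,r;s)S_\lambda(h,2s+u_{\lambda,\nu})\), bound \(\ord_{s=0}\Phi_{\lambda,\nu}^{(n)}\) below by \(\rho^{(n)}_{\lambda,\nu}\), and read off the \(s^0\)-coefficient. Your expansions in (1)--(3) match the paper's.

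The step that fails as written is your derivation of where \(S_\lambda(h,2s+u_{\lambda,\nu})\) can have a pole.
\begin{enumerate}
\item The reciprocal factors \(\zeta(u)^{-1}\) and \(\zeta(2u-2j)^{-1}\) vanish, rather than blow up, when their argument equals \(1\). At integer arguments their poles come from the trivial zeros \(-2,-4,\dots\) of \(\zeta\).
\item The premise \(u_{\lambda,\nu}\geq2\) is false on \(\calJ_\lambda^{(n)}\): the pair \((1,n)\) has \(u=2-m\), and for \(n=10\) the pairs \((\lambda,\lambda+5)\) have \(u=1\).
\item Your criterion gives the wrong reason even when \(u\geq2\). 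The pair \((n-4,n)\in\calJ_{n-4}^{(n)}\) has \(u=m-3\), and its factor with \(j=m-2\) is \(\zeta(4s-2)^{-1}\), which has a simple pole at \(s=0\). The Siegel series is nevertheless holomorphic there. Since \(m-2\) is odd and \(h>0\), we have \(\disc(h)=(-1)^{m-2}\det h<0\), so \(\chi_h\) is odd and \(L(2s-1,\chi_h)\) has a trivial zero at \(s=0\) that cancels the pole.
\item Similarly, for \(n=10\) and \(\lambda=4,5\), the pole of \(\zeta(4s-2)^{-1}\) is cancelled by the zero of \(\zeta(2s+1)^{-1}\).
\end{enumerate}

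The claim you need is true: for \(\nu\in\calJ_\lambda^{(n)}\) and \(h>0\), the only pole of \(S_\lambda(h,2s+u_{\lambda,\nu})\) at \(s=0\) is the square-discriminant one. Your argument just does not prove it. There are two ways to repair it:
\begin{enumerate}
\item take the claim from the proof of Proposition~\ref{prop:semidefinite-vanishing}, as the paper does; or
\item verify it pair by pair over \(\calJ_\lambda^{(n)}\), tracking the trivial zeros of \(\zeta\) and of \(L(\cdot,\chi_h)\) together with the parity of \(\chi_h\) forced by \(\lambda\) and \(h>0\).
\end{enumerate}
The same check is what actually gives holomorphy of \(S_{n-2}(h,2s+m-1)\) in your case (1). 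The inequality \(2m-1\neq n\) only excludes the \(L\)-function pole, not poles coming from the reciprocal zeta factors. With this repair, the rest of your argument goes through.
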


\begin{proof}
    If \(\lambda=n\),
    only the term with \(\nu=n\) occurs.
    If \(\lambda<n\),
    Proposition~\ref{prop:semidefinite-vanishing} shows that only the terms with
    \(\nu\in\calJ_{\lambda}^{(n)}\) can contribute.

    For such \(\nu\),
    we have
    \[ A_{T,\nu}^{(n)}(Y,s)=\Phi_{\lambda,\nu}^{(n)}(Y,h,r;s) S_\lambda(h,2s+u_{\lambda,\nu}). \]

    If \((\lambda,\nu)=(n-2,n)\),
    substituting the Laurent expansion of
    \(\Phi_{n-2,n}^{(n)}(Y,h,r;s)\)
    and the Taylor expansion of \(S_{n-2}(h,m-1+2s)\)
    gives the formula in \((1)\).

    Suppose next that the square-discriminant pole condition holds.
    Then
    \[ S_\lambda(h,2s+u_{\lambda,\nu})= \frac{\calP_h(u_{\lambda,\nu})}{2s} +\calP_h'(u_{\lambda,\nu})+\gamma_E\calP_h(u_{\lambda,\nu})+O(s). \]
    If \(n-\lambda=0,2,4\),
    the factor \(\Phi_{\lambda,\nu}^{(n)}(Y,h,r;s)\) is holomorphic at \(s=0\),
    and extracting the coefficient of \(s^0\) gives the first formula in \((2)\).
    If \(n-\lambda=6,8\),
    the same extraction using
    \[ \Phi_{\lambda,\nu}^{(n)}(Y,h,r;s)=s\Phi_{\lambda,\nu}^{(n),1}(Y,h,r)+O(s^2) \]
    gives the second formula in \(2\).
    If \(n-\lambda\geq10\),
    the zero of \(\Phi_{\lambda,\nu}^{(n)}(Y,h,r;s)\) has order at least \(2\),
    so the term gives no constant term.

    In the remaining cases,
    both factors are holomorphic at \(s=0\),
    and direct substitution gives the formula in \((3)\).
\end{proof}

\subsection{Indices That Are Not Positive Semi-definite}

We now treat the Fourier coefficients indexed by matrices that are not positive semi-definite.
Let \(T\in\Lambda_n\) be non-zero,
and suppose that \(T\not\geq0\).
Put \(\lambda=\rank(T)\),
and choose \(h\in\Lambda_\lambda^\ast\) and \(r\in M_{n,\lambda}(\bbZ)^{\prim}\) such that
\(T=h[{}^tr]\).
Then \(h\) is non-degenerate and not positive definite.
Let the signature of \(h\) be \((p,q)\),
where \(p+q=\lambda\) and \(q>0\).
For \(\lambda\leq\nu\leq n\),
put
\(u_{\lambda,\nu}=m+1-\nu+\lambda\).

We first rewrite the factor containing the confluent hypergeometric function.
By the formula for the confluent hypergeometric function due to Shimura~\cite[Theorem~4.2 and (4.6.K)]{Shimura1982},
there exists a function \(H_{p,q,\nu-\lambda}(s;Y,h,r)\),
holomorphic at \(s=0\),
such that
\begin{align}
     & \frac{ \eta_\lambda \left( 2Y[r], \pi h; s+m+1-\frac{\nu-\lambda}{2}, s-\frac{\nu-\lambda}{2} \right) }{ \Gamma_\lambda\left(s-\frac{\nu-\lambda}{2}\right) } \notag                                                                   \\
     & \qquad=H_{p,q,\nu-\lambda}(s;Y,h,r) \frac{ \Gamma_p\left(s-\frac{\nu-\lambda+q}{2}\right) \Gamma_q\left(s+m+1-\frac{\nu-\lambda+p}{2}\right) }{ \Gamma_\lambda\left(s-\frac{\nu-\lambda}{2}\right) }. \label{eq:indefinite-eta-factor}
\end{align}
Here \(\Gamma_0(s)=1\).

The second gamma factor in the numerator of \eqref{eq:indefinite-eta-factor}
is holomorphic and non-zero at \(s=0\).
Indeed,
for \(0\leq a\leq q-1\),
we have
\[ m+1-\frac{\nu-\lambda+p+a}{2}=m+1-\frac{\nu-q+a}{2}\geq\frac32. \]
We put
\[ \delta_q(\nu-\lambda)=\ord_{s=0} \frac{\Gamma_p\left(s-\frac{\nu-\lambda+q}{2}\right)}{\Gamma_\lambda\left(s-\frac{\nu-\lambda}{2}\right)}. \]
This number is independent of \(p\),
and is given by
\begin{equation}\label{eq:delta-q-d}
    \delta_q(\nu-\lambda)=
    \begin{cases}
        \left\lceil q/2\right\rceil,   & \nu-\lambda\equiv0\pmod2, \\[4pt]
        \left\lfloor q/2\right\rfloor, & \nu-\lambda\equiv1\pmod2.
    \end{cases}
\end{equation}

If \(\lambda<\nu\),
define
\begin{align*}
    \Psi_{\lambda,\nu}^{(n),p,q}(Y,h,r;s)
     & :=B_{\lambda,\nu}^{(n)}(s) \calZ_{\lambda,\nu}(2s+m+1)\det(Y)^s\det(2Y[r])^{(\lambda-\nu)/2}                                                                                                              \\
     & \quad\times H_{p,q,\nu-\lambda}(s;Y,h,r) \frac{ \Gamma_p\left(s-\frac{\nu-\lambda+q}{2}\right) \Gamma_q\left(s+m+1-\frac{\nu-\lambda+p}{2}\right) }{ \Gamma_\lambda\left(s-\frac{\nu-\lambda}{2}\right) } \\
     & \quad\times \Xi_{\nu-\lambda}^{(n-\lambda)} \left( 2g(Y,u_r), 2s+\frac{n+1-\nu}{2} \right),
\end{align*}
and if \(\nu=\lambda\),
define
\begin{align*}
    \Psi_{\lambda,\lambda}^{(n),p,q}(Y,h,r;s)
     & =(-1)^{k\lambda/2}2^\lambda\pi^{\lambda\kappa(\lambda)} \det(Y)^s\frac{H_{p,q,0}(s;Y,h,r)}{\Gamma_\lambda(s+m+1)} \\
     & \quad\times \frac{ \Gamma_p\left(s-\frac q2\right) \Gamma_q\left(s+m+1-\frac p2\right) }{ \Gamma_\lambda(s) }.
\end{align*}
Then
\[ A_{T,\nu}^{(n)}(Y,s)=\Psi_{\lambda,\nu}^{(n),p,q}(Y,h,r;s) S_\lambda(h,2s+u_{\lambda,\nu}). \]

\begin{lem}\label{lem:square-discriminant-sign}
    Assume that \(\lambda\) is even.
    Let \(h\in\Lambda_\lambda^\ast\) have signature \((p,q)\).
    If \(\disc(h)\in\bbQ^{\times2}\),
    then
    \[ q\equiv\lambda/2\pmod2. \]
\end{lem}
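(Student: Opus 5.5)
The plan is to compute the sign of \(\disc(h)\) in two ways: from the signature over \(\bbR\), and from the assumption that \(\disc(h)\) is a rational square. Diagonalize \(h\) over \(\bbR\). Since \(h\) has \(p\) positive and \(q\) negative eigenvalues, we have \(\operatorname{sign}(\det h)=(-1)^q\). By the convention \(\disc(h)=(-1)^{\lambda/2}\det h\), this gives
\[ \operatorname{sign}\bigl(\disc(h)\bigr)=(-1)^{\lambda/2+q}. \]

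Next, assume \(\disc(h)\in\bbQ^{\times2}\), say \(\disc(h)=c^2\) with \(c\in\bbQ^\times\). Then \(\disc(h)>0\), since the square of a nonzero rational number is positive; \(c\neq0\) holds because \(h\in\Lambda_\lambda^\ast\) is non-degenerate. Comparing with the sign computed above gives \((-1)^{\lambda/2+q}=1\), that is, \(q\equiv\lambda/2\pmod2\), which is the claim.

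No step here is a real obstacle. The only care needed is that the convention \(\disc(h)=(-1)^{\lambda/2}\det h\) be used consistently with the one fixed in the subsection on Siegel series. Note also that only the archimedean place is involved: the square condition is used solely through positivity, so no local information at finite primes is needed.
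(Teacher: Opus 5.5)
Your proposal is correct and follows the paper's proof. Both arguments read off \(\operatorname{sgn}(\disc(h))=(-1)^{\lambda/2+q}\) from the signature and then use the positivity of a nonzero rational square to conclude that \(\lambda/2+q\) is even.
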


\begin{proof}
    Since \(h\) has signature \((p,q)\),
    we have \(\operatorname{sgn}(\det h)=(-1)^q\).
    Hence
    \[ \operatorname{sgn}(\disc(h))= \operatorname{sgn}\left((-1)^{\lambda/2}\det h\right)= (-1)^{\lambda/2+q}. \]
    If \(\disc(h)\) is a square in \(\bbQ^\times\),
    then \(\disc(h)>0\).
    Therefore \(\lambda/2+q\) is even.
\end{proof}

\begin{prop}\label{prop:indefinite-index-coefficients}
    Let \(T\in\Lambda_n\) be non-zero and suppose that \(T\not\geq0\).
    Then
    \[ A_T^{(n)}(Y)=\sum_{\nu=\lambda}^{n}A_{T,\nu}^{(n)}(Y). \]
    We say that \((\lambda,\nu)\) is in the square-discriminant case if
    all of the following conditions hold:
    (i) \(\lambda\) is even,
    (ii) \(\disc(h)\in\bbQ^{\times2}\),
    (iii) \(\nu=m+\lambda/2\).
    The summands are described as follows.

    \begin{enumerate}
        \item
              Suppose first that \((\lambda,\nu)\) is not in the
              square-discriminant case.
              Then \(A_{T,\nu}^{(n)}(Y)\neq 0\)
              can occur only if \((q,\lambda,\nu)\) is one of the following
              cases:
              \[
                  \begin{array}{c|c}
                      q & (\lambda,\nu)                                               \\ \hline
                      1 & (\lambda,\lambda+1),\ (n-2,n),\ (n-3,n),\ (n-4,n-1),\ (1,n) \\[2pt]
                      2 & (n-2,n).
                  \end{array}
              \]
              When \(n=10\), the case \(q=1\) also includes \((2,7)\), \((3,8)\), \((4,9)\), and \((5,10)\).
              In these cases, write
              \[ \Psi_{\lambda,\nu}^{(n),p,q}(Y,h,r;s)= \frac{\Psi_{\lambda,\nu}^{(n),p,q;-1}(Y,h,r)}{s} +\Psi_{\lambda,\nu}^{(n),p,q;0}(Y,h,r)+O(s). \]
              Then
              \[ A_{T,\nu}^{(n)}(Y)= \Psi_{\lambda,\nu}^{(n),p,q;0}(Y,h,r)S_\lambda(h,u_{\lambda,\nu})+ 2\Psi_{\lambda,\nu}^{(n),p,q;-1}(Y,h,r) S_\lambda'(h,u_{\lambda,\nu}). \]

        \item
              Suppose next that \((\lambda,\nu)\) is in the
              square-discriminant case.
              Then \(A_{T,\nu}^{(n)}(Y)\neq 0\)
              can occur only if \((q,\lambda,\nu)\) is one of the following
              cases:
              \[
                  \begin{array}{c|c}
                      q & (\lambda,\nu)     \\ \hline
                      1 & (n,n),\ (n-4,n-2) \\
                      2 & (n-2,n-1).
                  \end{array}
              \]
              Near \(u=u_{\lambda,\nu}\), write
              \[ S_\lambda(h,u)=\zeta\left(u-\frac{\lambda}{2}\right)\calP_h(u), \]
              where \(\calP_h(u)\) is holomorphic at \(u=u_{\lambda,\nu}\), and put
              \[ Q_h(u_{\lambda,\nu})= \calP_h'(u_{\lambda,\nu})+\gamma_E\calP_h(u_{\lambda,\nu}). \]
              If
              \[ \Psi_{\lambda,\nu}^{(n),p,q}(Y,h,r;s)= \Psi_{\lambda,\nu}^{(n),p,q;0}(Y,h,r)+ s\Psi_{\lambda,\nu}^{(n),p,q;1}(Y,h,r) +O(s^2), \]
              then
              \[ A_{T,\nu}^{(n)}(Y)= \Psi_{\lambda,\nu}^{(n),p,q;0}(Y,h,r)Q_h(u_{\lambda,\nu}) +\frac{1}{2}\Psi_{\lambda,\nu}^{(n),p,q;1}(Y,h,r)\calP_h(u_{\lambda,\nu}). \]
    \end{enumerate}
    Moreover,
    every term \(A_{T,\nu}^{(n)}(Y,s)\) is holomorphic at \(s=0\).
\end{prop}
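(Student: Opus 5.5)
The plan is an order count at \(s=0\), organized exactly as in Proposition~\ref{prop:semidefinite-vanishing}, but with the definite confluent factor replaced by the indefinite one \eqref{eq:indefinite-eta-factor}. For each \(\lambda\leq\nu\leq n\) I would write
\[ A_{T,\nu}^{(n)}(Y,s)=\Psi_{\lambda,\nu}^{(n),p,q}(Y,h,r;s)\,S_\lambda(h,2s+u_{\lambda,\nu}). \]
Since \(H_{p,q,\nu-\lambda}\) and the \(\Gamma_q\) factor are holomorphic and non-vanishing at \(s=0\), and \(\Xi_{\nu-\lambda}^{(n-\lambda)}\) is entire, only two factors are singular or vanishing. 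The first is the scalar factor \(\calD^{(n)}_{\lambda,\nu}(s)\), which is hidden in \(B^{(n)}_{\lambda,\nu}(s)\calZ_{\lambda,\nu}(2s+m+1)\) and carries the extra factor \(\Gamma_{\nu-\lambda}(s)^{-1}\). The second is the gamma quotient of order \(\delta_q(\nu-\lambda)\). Hence
\[ \ord_{s=0}\Psi_{\lambda,\nu}^{(n),p,q}\geq\rho^{(n)}_{\lambda,\nu}+\delta_q(\nu-\lambda) \qquad(\lambda<\nu). \]
I have to check that the normalizations of \(\calC^{(n)}_{\lambda,\nu}\) and \(B^{(n)}_{\lambda,\nu}\) agree up to a non-zero constant and the factor \(\Gamma_\nu(s+m+1)^{-1}\), which is regular. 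For \(\nu=\lambda\), the factor \(\Gamma_p(s-q/2)/\Gamma_\lambda(s)\) has order \(\lceil q/2\rceil\geq1\) directly. For the Siegel series, Proposition~\ref{prop:siegel-series-factorization}(1) shows that \(S_\lambda(h,2s+u)\) is holomorphic at \(s=0\) except for a simple pole. That pole occurs only when \(\lambda\) is even, \(\disc(h)\) is a square and \(u=\lambda/2+1\), i.e.\ \(\nu=m+\lambda/2\). I must also confirm that the \(\zeta(s)^{-1}\) and \(\zeta(2s-2j)^{-1}\) factors have no poles there, which they do not, since their arguments are at least \(2\) apart from the \(\zeta(2s-2j)\) zeros, and those only help.

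\emph{Non-square case.} Here the criterion for \(A_{T,\nu}^{(n)}(Y)\neq0\) is \(\rho^{(n)}_{\lambda,\nu}+\delta_q(\nu-\lambda)\leq0\). Since \(q\geq1\), the term \(\delta_q\) is at least \(1\) whenever \(\nu-\lambda\) is even or \(q\geq2\), and it vanishes only when \(q=1\) and \(\nu-\lambda\) is odd. I would then read off the table of Lemma~\ref{lem:scalar-factor-order}. For \(q=1\) and \(\nu-\lambda\) odd we need \(\rho\leq0\); the rows of the table with odd difference give \((\lambda,\lambda+1)\), \((n-2,n-1)\), \((n-3,n)\), \((n-4,n-1)\) and \((1,n)\) (the last since \(n-1\) is odd), together with the \(n=10\) pairs \((\lambda,\lambda+5)\). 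The pair \((n-2,n-1)\) is already of the form \((\lambda,\lambda+1)\). For even \(\nu-\lambda\) or \(q\geq2\) we need \(\rho\leq-\delta_q\leq-1\), which happens only at \((n-2,n)\) with \(\rho=-1\) and \(\delta_q=1\), i.e.\ \(q\in\{1,2\}\). In every surviving case the order of \(\Psi\) is at least \(-1\), and the only case of exact order \(-1\) is \((n-2,n)\). Expanding \(S_\lambda(h,u_{\lambda,\nu}+2s)\) to first order and extracting the coefficient of \(s^0\) gives the formula in (1). This also proves holomorphy of \(A_{T,\nu}^{(n)}(Y,s)\), because the pole of \(\Psi\) is multiplied by a function holomorphic at \(u_{\lambda,\nu}\).

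\emph{Square-discriminant case.} Here the Siegel series has a simple pole \(\calP_h(u)/(2s)+Q_h(u)+O(s)\), so a term survives exactly when \(\rho+\delta_q\leq1\). Lemma~\ref{lem:square-discriminant-sign} forces \(q\equiv\lambda/2\pmod2\), and \(\nu-\lambda=m-\lambda/2\). Because \(m\) is odd, \(\nu-\lambda\) has parity opposite to \(\lambda/2\), and hence opposite to \(q\). This decides \(\delta_q\): for \(q\) odd, \(\nu-\lambda\) is even and \(\delta_q=\lceil q/2\rceil\); for \(q\) even, \(\nu-\lambda\) is odd and \(\delta_q=q/2\). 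The remaining steps are as follows.
\begin{enumerate}
\item Run through the pairs with \(\nu=m+\lambda/2\), namely \(\lambda=n-2j\) and \(\nu=n-j\), noting that \(\nu-\lambda=j\). Find \(\rho^{(n)}_{n-2j,n-j}\) from Lemma~\ref{lem:scalar-factor-order}: it is \(0\) for \(j=1,2\), equal to \(1\) for \(j=3,4\), and at least \(1\) for larger \(j\). The case \(\lambda=\nu=n\) (\(j=0\)) is handled directly, with \(\delta=\lceil q/2\rceil\).
\item Impose \(\rho+\delta_q\leq1\) with \(q\geq1\) and the parity constraint. This leaves \(j=0\) with \(q=1\), giving \((n,n)\); \(j=2\) with \(q=1\), giving \((n-4,n-2)\); and \(j=1\) with \(q=2\), giving \((n-2,n-1)\). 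For \(j=3,4\) we get \(\rho+\delta\geq2\).
\item In each surviving case \(\Psi\) is holomorphic at \(s=0\). Multiplying its Taylor expansion by the Laurent expansion of the Siegel series yields the stated formula in (2) and again gives holomorphy.
\end{enumerate}

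\emph{Main obstacle.} I expect the hard part to be the reliability of the order bookkeeping. The statement only asserts ``can occur only if'', so lower bounds on orders suffice. The concern is that Lemma~\ref{lem:scalar-factor-order} was stated for the factor \(\calD\), and I must make sure that the indefinite case introduces no new poles beyond those counted by \(\rho\) and \(\delta_q\), in particular from \(\Gamma_p(s-(\nu-\lambda+q)/2)\). That is why I would verify formula \eqref{eq:delta-q-d} first. I would do this by pairing each pole of \(\Gamma_p\) against a pole of \(\Gamma_\lambda(s-(\nu-\lambda)/2)\), whose parameters \(s-(\nu-\lambda+q+a)/2\) form a subset of \(s-(\nu-\lambda+b)/2\).
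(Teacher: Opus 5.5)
\textbf{Verdict.} Your route is the paper's. You bound \(\ord_{s=0}\Psi^{(n),p,q}_{\lambda,\nu}\) from below by \(\rho^{(n)}_{\lambda,\nu}+\delta_q(\nu-\lambda)\), allow the Siegel series only its square-discriminant pole, and use Lemma~\ref{lem:square-discriminant-sign} for the parity of \(q\); your enumeration reproduces both tables. The gap is a step that the paper shares, and with it corrected your order count no longer yields the table in (1).

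\textbf{The failing step.} You dismiss the factors \(\zeta(s)^{-1}\) and \(\zeta(2s-2j)^{-1}\) of Proposition~\ref{prop:siegel-series-factorization}(1), but a reciprocal of \(\zeta\) has a pole at every trivial zero. At \(w=u_{\lambda,\nu}=m+1-(\nu-\lambda)\), which is small when \(\nu-\lambda\) is large, one has \(2w-2j\in\{-2,-4,\dots\}\) for some \(1\le j\le\lfloor\lambda/2\rfloor\) as soon as \(\nu\ge m+2+\lceil\lambda/2\rceil\). For even \(\lambda\), these poles are offset by at most one trivial zero of \(L(u_{\lambda,\nu}-\lambda/2,\chi_h)\). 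Whether that zero occurs depends on \(\chi_h(-1)=(-1)^{\lambda/2+q}\). The paper's proof makes the same assertion (``outside the square-discriminant case, the Siegel series is holomorphic at \(u=u_{\lambda,\nu}\)''). Only the square-discriminant part is safe, because there \(u_{\lambda,\nu}=\lambda/2+1\).

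\textbf{A case your count does not exclude.} Take \(q=1\), \(\lambda=n-4\), \(\nu=n\); this pair is not in the square-discriminant case, since \(m+\lambda/2=n-2\). Here \(u_{\lambda,\nu}=m-3\ge2\), and the factor with \(j=m-2\) is \(\zeta(4s-2)^{-1}\), which has a simple pole. Meanwhile \(L(2s-1,\chi_h)\to L(-1,\chi_h)\neq0\): both \(\lambda/2=m-2\) and \(q\) are odd, so \(\disc(h)>0\) and \(\chi_h\) is even or trivial. Hence
\[ \ord_{s=0}A^{(n)}_{T,n}(Y,s)\ge\rho^{(n)}_{n-4,n}+\delta_1(4)-1=0. \]
Nothing else in the product forces a further zero. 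The factor \(\Gamma(s+\tfrac32)\) is regular and non-zero, and \(\Xi_4^{(4)}(2g(Y,u_r),2s+\tfrac12)\) is finite and non-zero at \(s=0\). There is also no reason for \(H_{p,1,4}\) or \(\prod_pF_p(h,p^{3-m})\) to vanish. So the count does not exclude \((n-4,n)\) for \(q=1\), and its finite part would be
\[ \tfrac12\Psi^{(n),p,1;1}_{n-4,n}(Y,h,r)\operatorname*{Res}_{w=m-3}S_{n-4}(h,w), \]
which is not of the shape in (1).

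\textbf{Safer bookkeeping.} Recombine, for even \(\lambda\) and \(d=\nu-\lambda\),
\[ S_\nu\bigl(\diag(h,0_d),w\bigr)=\calZ_{\lambda,\nu}(w)\,S_\lambda(h,w-d)=\frac{L\bigl(w-d-\tfrac{\lambda}{2},\chi_h\bigr)\prod_{j=1}^{d}\zeta(2w-\nu-j)}{\zeta(w)\prod_{j=1}^{d+\lambda/2}\zeta(2w-2j)}\prod_pF_p\bigl(h,p^{d-w}\bigr). \]
In this form the trivial zeros of numerator and denominator cancel visibly. For \(\nu=n\), \(d\) even and \(q\) odd, this has a simple pole at \(w=m+1\), barring zeros of \(\prod_pF_p\). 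So the order count fails to exclude \((n-d,n)\) for every even \(d\) when \(q=1\), not only \(d=2\). The same happens for odd \(d\), e.g.\ \((n-5,n)\) with \(q=1\) once \(m\ge9\).

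\textbf{Holomorphy argument.} Your justification of holomorphy is a non sequitur: a pole of \(\Psi\) times a function holomorphic at \(u_{\lambda,\nu}\) is still a pole. In fact, outside the square-discriminant case \(\Psi\) has no pole at all. At the only pair with \(\rho=-1\), namely \((n-2,n)\), one has \(\delta_q(2)\ge1\), contrary to your claim that its order is exactly \(-1\). In the square-discriminant case you need \(\ord\Psi\ge1\), i.e.\ \(\rho+\delta_q\ge1\). Your enumeration gives this once \((n-2,n-1)\) with \(q=1\) is excluded by Lemma~\ref{lem:square-discriminant-sign}, as the paper does. Both the table and the holomorphy claim have to be re-examined once the extra Siegel-series poles are included.
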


\begin{proof}
    We first compute the finite parts.
    The identity
    \[ A_{T,\nu}^{(n)}(Y,s)= \Psi_{\lambda,\nu}^{(n),p,q}(Y,h,r;s) S_\lambda(h,2s+u_{\lambda,\nu}) \]
    reduces the calculation to multiplying Laurent expansions.

    \begin{enumerate}
        \item
              Outside the square-discriminant case, the Siegel series is
              holomorphic at \(u=u_{\lambda,\nu}\).
              Hence
              \[ S_\lambda(h,2s+u_{\lambda,\nu})= S_\lambda(h,u_{\lambda,\nu})+2sS_\lambda'(h,u_{\lambda,\nu})+O(s^2). \]
              Multiplying this expansion by the Laurent expansion of
              \(\Psi_{\lambda,\nu}^{(n),p,q}(Y,h,r;s)\) gives the formula in
              \((1)\).

        \item
              In the square-discriminant case, we have
              \[ S_\lambda(h,2s+u_{\lambda,\nu})= \frac{\calP_h(u_{\lambda,\nu})}{2s}+Q_h(u_{\lambda,\nu})+O(s). \]
              Multiplying this expansion by the Taylor expansion of
              \(\Psi_{\lambda,\nu}^{(n),p,q}(Y,h,r;s)\) gives the formula in
              \((2)\).
    \end{enumerate}

    It remains to determine when a finite part can occur.
    Combining Lemma~\ref{lem:scalar-factor-order},
    \eqref{eq:delta-q-d}, and
    Lemma~\ref{lem:square-discriminant-sign}, we obtain the following
    restrictions.

    \begin{enumerate}
        \item
              Outside the square-discriminant case, a finite part can occur
              only if
              \[ \rho^{(n)}_{\lambda,\nu}+\delta_q(\nu-\lambda)\leq 0. \]
              Since \(\delta_q(\nu-\lambda)=0\) exactly when \(q=1\) and
              \(\nu-\lambda\) is odd, the pairs with
              \(\rho^{(n)}_{\lambda,\nu}=0\) give the first row of the table.
              The unique pair with \(\rho^{(n)}_{\lambda,\nu}=-1\), namely
              \((n-2,n)\), also allows \(q=2\).

        \item
              In the square-discriminant case, the condition
              \(u_{\lambda,\nu}=\lambda/2+1\) gives
              \[ (\lambda,\nu)=(n,n),\ (n-2,n-1),\ (n-4,n-2),\ldots. \]
              Moreover, Lemma~\ref{lem:square-discriminant-sign} imposes \(q\equiv \lambda/2 \pmod 2\).
              Since the Siegel series has at most a simple pole, a finite part
              can occur only if
              \(\rho^{(n)}_{\lambda,\nu} + \delta_q(\nu-\lambda) \leq 1\)
              for \(\lambda<\nu\).
              For \(\lambda=\nu=n\), the corresponding condition is \(\delta_q(0)\leq 1\).
              These restrictions leave exactly
              \[ (n,n),\ q=1, \qquad (n-2,n-1),\ q=2, \qquad (n-4,n-2),\ q=1. \]
    \end{enumerate}

    Finally, we show that no principal part occurs.
    Outside the square-discriminant case, a principal part could occur
    only if \(\rho^{(n)}_{\lambda,\nu}+\delta_q(\nu-\lambda)=-1\).
    The only possible pair with \(\rho^{(n)}_{\lambda,\nu}=-1\) is
    \((n-2,n)\), but then \(\delta_q(2)=\left\lceil q/2\right\rceil\geq1\).
    Hence no principal part occurs in this case.

    In the square-discriminant case, a principal part would require
    \(\rho^{(n)}_{\lambda,\nu}+\delta_q(\nu-\lambda)=0\)
    for \(\lambda<\nu\), or \(\delta_q(0)=0\) for \(\lambda=\nu=n\).
    The latter is impossible because \(q>0\).
    The former has only the formal possibility \((\lambda,\nu,q)=(n-2,n-1,1)\),
    but this is ruled out by Lemma~\ref{lem:square-discriminant-sign}.
    Thus no principal part occurs.
\end{proof}

\section{Main Theorem}

We now collect the results of the preceding sections.
Throughout this section,
let \(n=2m\), \(k=m+1\), \(\epsilon_m=(-1)^{(m+1)/2}\), where \(m\geq3\) is odd.
We write
\[ E_{m+1}^{(2m)}(Z)=\sum_{T\in\Lambda_n}A_T^{(n)}(Y)e(\tr(TX)). \]

\begin{thm}\label{thm:final-fourier-coefficients}
    The constant Fourier coefficient is
    \[ A_0^{(2m)}(Y)=1+ \frac{(\epsilon_m-1)\pi^{m+1}} {\zeta(m+1)\zeta(2m)\sqrt{\det Y}} \left( \frac{\zeta(m)}{2^{m+1}m!}+ \frac{(m-2)!\zeta(m-1)}{(2m)!} \zeta_1^{(2m)}(Y,m-1) \right), \]
    where \(\zeta_1^{(2m)}(Y,m-1)\) is understood by meromorphic continuation.

    Assume now that \(m\equiv1\pmod4\).
    For every non-zero \(T\in\Lambda_n\), the coefficient
    \(A_T^{(n)}(Y)\) is determined as follows.

    \begin{enumerate}
        \item Suppose that \(T\geq0\), and put
              \(\lambda=\rank(T)\).
              Choose \(h\) and \(r\) as in Section~\ref{sec:nonzero-coefficients}.
              Then
              \[ A_T^{(n)}(Y)=\sum_{\nu\in\calJ_\lambda^{(n)}} A_{T,\nu}^{(n)}(Y). \]
              The summands are given by
              Proposition~\ref{prop:nonzero-coefficient-contribution}.
              They fall into the following three types:
              \begin{enumerate}
                  \item For all non-exceptional
                        \(\nu\in\calJ_\lambda^{(n)}\), the term is
                        \(\Phi_{\lambda,\nu}^{(n)}(Y,h,r) S_\lambda(h,u_{\lambda,\nu})\).

                  \item For \((\lambda,\nu)=(n-2,n)\), the term is the finite part
                        involving \(S_{n-2}(h,m-1)\) and \(S'_{n-2}(h,m-1)\).

                  \item If \(\lambda\) is even,
                        \(\disc(h)\in\bbQ^{\times2}\), and
                        \(\nu=m+\lambda/2\), the term is the finite part involving
                        \(\calP_h(u_{\lambda,\nu})\) and
                        \(\calP'_h(u_{\lambda,\nu})\).
              \end{enumerate}

        \item Suppose that \(T\not\geq0\).
              Let \(\lambda=\rank(T)\), and choose \(h\) and \(r\) as in
              Section~\ref{sec:nonzero-coefficients};
              let \((p,q)\) be the signature of \(h\), with \(q>0\).
              Then
              \[ A_T^{(n)}(Y)=\sum_{\nu=\lambda}^{n}A_{T,\nu}^{(n)}(Y). \]
              The summands are given by
              Proposition~\ref{prop:indefinite-index-coefficients}.
              Outside the square-discriminant case, the only possible
              contributing pairs are
              \[
                  \begin{array}{c|c}
                      q & (\lambda,\nu)                                               \\ \hline
                      1 & (\lambda,\lambda+1),\ (n-2,n),\ (n-3,n),\ (n-4,n-1),\ (1,n) \\[2pt]
                      2 & (n-2,n)
                  \end{array}
              \]
              If \(n=10\), the first row also includes \((2,7)\), \((3,8)\), \((4,9)\), and \((5,10)\).

              In the square-discriminant case, the only possible
              contributing pairs are
              \[
                  \begin{array}{c|c}
                      q & (\lambda,\nu)      \\ \hline
                      1 & (n,n) ,\ (n-4,n-2) \\
                      2 & (n-2,n-1)
                  \end{array}
              \]
              Moreover, every summand \(A_{T,\nu}^{(n)}(Y,s)\) is
              holomorphic at \(s=0\).
    \end{enumerate}

    The formulas are independent of the auxiliary choice of the pair
    \((h,r)\).
\end{thm}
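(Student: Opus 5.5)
The theorem is an assembly of results already proved, so the proof proceeds by bookkeeping and not by new computation. The first step is to justify evaluating term by term at \(s=0\). For \(\Re(s)>n\), Proposition~\ref{prop:mizumoto} gives \(A_T^{(n)}(Y,s)=\sum_{\nu=\lambda}^{n}A_{T,\nu}^{(n)}(Y,s)\), a finite sum of meromorphic functions of \(s\). By Proposition~\ref{prop:eisenstein}(1) and the remark after Proposition~\ref{prop:mizumoto}, the total \(A_T^{(n)}(Y,s)\) is holomorphic at \(s=0\). Hence \(A_T^{(n)}(Y)\) is the sum of the constant Laurent coefficients of the summands, and any principal parts of individual summands must cancel among themselves.

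For the constant term, we quote Theorem~\ref{thm:constant-term} directly. It holds for every odd \(m\ge3\); the case \(m=3\) uses Proposition~\ref{prop:degree-six-middle-rank-cancellation} for the middle ranks \(\nu=3,4\).

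Now let \(m\equiv1\pmod4\) and \(T\ne0\). If \(T\ge0\), Proposition~\ref{prop:semidefinite-vanishing} shows that every \(\nu\notin\calJ_\lambda^{(n)}\) contributes \(O(s)\). This reduces the sum to \(\nu\in\calJ_\lambda^{(n)}\), and Proposition~\ref{prop:nonzero-coefficient-contribution} then gives each summand explicitly. Its three cases (3), (1), (2) correspond to types (a), (b), (c) of the statement. When sorting the cases, note that \((n-2,n)\) never satisfies the square-discriminant condition \(\nu=m+\lambda/2\), since that would require \(n=m+m-1\). So the classification is a genuine partition.

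If \(T\not\ge0\), we invoke Proposition~\ref{prop:indefinite-index-coefficients} verbatim. It supplies both tables, the extra \(n=10\) pairs, and the holomorphy of each summand at \(s=0\).

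The step that needs actual care is independence of the choice of \((h,r)\). Two choices are related by \(h'=h[{}^tU]\) and \(r'=rU^{-1}\) with \(U\in\GL_\lambda(\bbZ)\). Then \(Y[r']\) and \(\pi h'\) transform compatibly, so \(\eta_\lambda(2Y[r'],\pi h';\cdot)\) is unchanged. This follows from the substitution \(x\mapsto x[U]\) in the defining integral, whose Jacobian is \(|\det U|^{\lambda+1}=1\). The determinants \(\det(2Y[r])\) are unchanged because \(\det U=\pm1\). The Siegel series satisfies \(S_\lambda(h[{}^tU],t)=S_\lambda(h,t)\), since \(R\mapsto U^{-1}R\,{}^tU^{-1}\) permutes \(\bbS_\lambda(\bbQ)/\bbS_\lambda(\bbZ)\) and preserves \(d(R)\). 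Consequently \(\calP_h\), \(\chi_h\), and the discriminant class are unchanged as well.

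The choice of completion \(u_r=(r\ r_1)\) changes \(r_1\) to \(r_1V+rW\) with \(V\in\GL_{n-\lambda}(\bbZ)\). This replaces the Schur complement \(g(Y,u_r)\) by \(g(Y,u_r)[V]\), and \(\Xi^{(n-\lambda)}_{\nu-\lambda}\) is invariant under this change. Each Laurent coefficient \(\Phi^{(n),j}_{\lambda,\nu}\) and \(\Psi^{(n),p,q;j}_{\lambda,\nu}\) is therefore choice-independent. The main risk is overlooking some dependence hidden in the function \(H_{p,q,\nu-\lambda}\). It is enough to check that Shimura's formula for \(H_{p,q,\nu-\lambda}\) is \(\GL_\lambda(\bbZ)\)-covariant in the same way as \(\eta_\lambda\), which follows because it is derived from \(\eta_\lambda\).
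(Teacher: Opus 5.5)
Your proposal is correct and follows the paper's proof, which likewise only cites Theorem~\ref{thm:constant-term}, Proposition~\ref{prop:nonzero-coefficient-contribution}, and Proposition~\ref{prop:indefinite-index-coefficients}. Your check that the result does not depend on the choice of \((h,r)\) goes beyond the paper, which only asserts it: you verify the invariance of \(\eta_\lambda\), \(S_\lambda\), \(\det(2Y[r])\), and \(\Xi^{(n-\lambda)}_{\nu-\lambda}(2g(Y,u_r),\cdot)\) under \(r\mapsto rU^{-1}\) and \(r_1\mapsto r_1V+rW\), and your worry about \(H_{p,q,\nu-\lambda}\) is unnecessary, since by \eqref{eq:indefinite-eta-factor} it equals \(\eta_\lambda\) divided by a gamma factor that depends only on \((p,q,\nu-\lambda,s)\).
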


\begin{proof}
    The formula for the constant term is
    Theorem~\ref{thm:constant-term}.
    The assertion for positive semi-definite non-zero indices is exactly
    Proposition~\ref{prop:nonzero-coefficient-contribution}.
    The assertion for indices that are not positive semi-definite is exactly
    Proposition~\ref{prop:indefinite-index-coefficients}.
\end{proof}

\appendix
\section{The Middle-Rank Terms in Degree 6}
\label{app:degree-six-middle-rank}

In this appendix,
we record the cancellation of the middle-rank terms which occurs only in
degree \(6\).
Thus \(m=3\),
\(n=6\),
and \(k=4\).
In this case,
the terms \(C_3(Y,s)\) and \(C_4(Y,s)\) also have order \(0\) at \(s=0\).
\begin{prop}\label{prop:degree-six-middle-rank-cancellation}
    Assume that \(m=3\), so that \(n=6\) and \(k=4\).
    Then
    \[ C_3(Y,0)+C_4(Y,0)=0. \]
\end{prop}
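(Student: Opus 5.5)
The plan is to compute the finite parts \(C_3(Y,0)\) and \(C_4(Y,0)\) separately from \eqref{eq:Cnu-first-column}, with \(m=3\), \(n=6\), \(k=4\), and then rewrite both in terms of one and the same Koecher--Maass value. Here \(\tau(3)=2\) and \(\tau(4)=3/2\), so the two terms involve \(\zeta_3^{(6)}(2Y,2s+2)\) and \(\zeta_4^{(6)}(2Y,2s+3/2)\). The sign \((-1)^{k\nu/2}\) equals \(1\) in both cases, since \(k=4\). Any cancellation must therefore come from the signs of the Laurent coefficients of the gamma quotients and of the Siegel series \(S_\nu(0,2s+4)\).

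\textbf{Step 1 (local expansions).} For each \(\nu=3,4\) I expand the three factors at \(s=0\), exactly as was done for \(\nu=1,2,2m-1,2m\) in Section~4.
\begin{itemize}
\item[(a)] The gamma quotient \(\Gamma_\nu(2s+\tau(\nu))/(\Gamma_\nu(s)\Gamma_\nu(s+4))\) vanishes to order \(\lceil\nu/2\rceil\), because \(\Gamma_\nu(s)^{-1}\) has zeros coming from \(\Gamma(s-j/2)\) at integer \(j/2\). In addition, \(\Gamma_\nu(2s+\tau(\nu))\) can have poles; for \(\nu=4\) this happens through \(\Gamma(2s+3/2-3/2)\).
\item[(b)] The Siegel series \(S_\nu(0,2s+4)\) is expanded from \eqref{eq:siegel-zero}. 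For \(\nu=3\) the factors are \(\zeta(1+2s)\), \(\zeta(4s+8-3-j)\) for \(j=1,2,3\), and \(\zeta(4s+8-2j)\). For \(\nu=4\) the factors are \(\zeta(2s)\), \(\zeta(4s+8-4-j)\) (which contains \(\zeta(4s)\) and \(\zeta(4s+1)\)), and \(\zeta(4s+8-2j)\) (which contains \(\zeta(4s)\)). I will record the leading order and the leading coefficient in each case.
\item[(c)] I write \(\zeta_\nu^{(6)}=\Xi_\nu^{(6)}/(2\varepsilon_\nu(u)\varepsilon_\nu(3-u)\prod\xi(2u-j))\) and expand this around \(u=\tau(\nu)\), as in the proof of Corollary~\ref{cor:koecher-maass-zeta-poles}.
\end{itemize}
Collecting the three expansions, each \(C_\nu(Y,0)\) should come out as an explicit rational multiple of a power of \(\pi\) times zeta values, multiplied by \(\Xi_3^{(6)}(2Y,2)\) for \(\nu=3\) and by \(\Xi_4^{(6)}(2Y,3/2)\) for \(\nu=4\).

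\textbf{Step 2 (a common normal form).} I reduce both Koecher--Maass values to \(\Xi_2^{(6)}((2Y)^{-1},3/2)\).
\begin{itemize}
\item[(a)] For \(\nu=4\), Lemma~\ref{lem:complementary-rank-duality} gives \(\zeta_4^{(6)}(g,u)=\det(g)^{-u}\zeta_2^{(6)}(g^{-1},u)\). Converting back to completed functions changes only the elementary \(\varepsilon\)- and \(\xi\)-factors, and these are evaluated at \(u=3/2\).
\item[(b)] For \(\nu=3\), the functional equation sends \(\Xi_3^{(6)}(g,2)\) to \(\det(g)^{-3/2}\Xi_3^{(6)}(g^{-1},1)\). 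Proposition~\ref{prop:xi-special-values}, applied with \(M=6\), \(\nu=3\), \(\mu=2\), is admissible since \(6\geq5\). It gives \(\Xi_3^{(6)}(g^{-1},1)=c_{6,3,2}\,\Xi_2^{(6)}(g^{-1},3/2)\).
\end{itemize}
After these reductions both terms are constants times \(\det(2Y)^{-3/2}\,\Xi_2^{(6)}((2Y)^{-1},3/2)\).

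\textbf{Step 3 (cancellation of constants).} It remains to show that the two constants are negatives of each other. I will use the functional equation of \(\zeta\) to convert the negative-integer and derivative values that appear, such as \(\zeta(-1)\), \(\zeta(-3)\) and \(\zeta'(-2)\), into positive values \(\zeta(2),\zeta(3),\zeta(4),\ldots\). I then compare the powers of \(\pi\) and \(2\) and the rational factors.

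The main obstacle is the bookkeeping in Step 1 for \(\nu=4\). There the orders of several competing factors all nearly balance: the gamma zeros, the \(\zeta(2s)\) and \(\zeta(4s)\) factors, the simple pole of \(\zeta(4s+1)\), and the double pole of the \(\xi\)-product against the zeros of \(\varepsilon_4(u)\varepsilon_4(3-u)\). Extracting the correct leading coefficient, and above all its sign, requires care. I would first carry out the order count symbolically to confirm that the total order is exactly \(0\), and only then track constants. As a check I would verify the analogous bookkeeping against the already computed case \(\nu=2m-1\), whose structure of a gamma zero cancelling a Siegel-series pole is similar.
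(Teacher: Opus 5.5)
Your plan is correct and is essentially the paper's proof. The paper also obtains \(C_3(Y,0)=-A_3\,\Xi_3^{(6)}(2Y,2)\) and \(C_4(Y,0)=-A_4\,\Xi_4^{(6)}(2Y,\tfrac32)\) with \(A_4=\tfrac{16}{3}A_3\) from the three Laurent expansions. It then links the two values through Proposition~\ref{prop:xi-special-values} with \((M,\nu,\mu)=(6,3,2)\), where \(c_{6,3,2}=-\tfrac13\), and through Lemma~\ref{lem:complementary-rank-duality}, which gives \(\Xi_4^{(6)}(S,\tfrac32)=-\tfrac{3}{16}\,\Xi_3^{(6)}(S,2)\).

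The differences are cosmetic. You route both values to \(\Xi_2^{(6)}((2Y)^{-1},\tfrac32)\) instead of expressing \(\Xi_4\) directly through \(\Xi_3\). Also, no zeta values at negative integers or derivatives actually occur here; the only special inputs are \(\zeta(0)=-\tfrac12\) and the poles of \(\zeta(4s+1)\), \(\zeta(2s+1)\), \(\xi(4s+1)\) and \(\xi(4s)\).
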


\begin{proof}
    We first relate the two completed Koecher--Maass zeta functions
    which occur in the computation.
    Applying Proposition~\ref{prop:xi-special-values} with
    \(M=6\), \(\nu=3\), and \(\mu=2\), we get
    \begin{equation}\label{eq:xi-three-via-xi-two} \Xi_3^{(6)}(S,2)=-\frac13\det(S)^{-3/2}\Xi_2^{(6)}\left(S^{-1},\frac32\right). \end{equation}
    By the definition of \(\Xi_2^{(6)}\) and
    Lemma~\ref{lem:complementary-rank-duality},
    \begin{align*}
        \Xi_2^{(6)} \left(S^{-1},\frac32\right) & =\frac92\xi(3)\xi(2)\zeta_2^{(6)} \left(S^{-1},\frac32\right)          \\
                                                & =\frac92\xi(3)\xi(2)\det(S)^{3/2}\zeta_4^{(6)} \left(S,\frac32\right).
    \end{align*}
    Hence
    \begin{equation}\label{eq:xi-three-zeta-four} \Xi_3^{(6)}(S,2)=-\frac32\xi(3)\xi(2)\zeta_4^{(6)}\left(S,\frac32\right). \end{equation}
    On the other hand, the definition of \(\Xi_4^{(6)}\) gives
    \begin{equation}\label{eq:xi-four-zeta-four} \Xi_4^{(6)}\left(S,\frac32\right)=\frac{9}{32}\xi(3)\xi(2)\zeta_4^{(6)}\left(S,\frac32\right). \end{equation}
    Combining \eqref{eq:xi-three-zeta-four} and
    \eqref{eq:xi-four-zeta-four}, we obtain
    \begin{equation}\label{eq:degree-six} \Xi_4^{(6)}\left(S,\frac32\right)=-\frac{3}{16}\Xi_3^{(6)}(S,2). \end{equation}

    We now compute \(C_3(Y,0)\) and \(C_4(Y,0)\).
    For \(\nu=3\), formula \eqref{eq:Cnu-first-column} gives
    \[ C_3(Y,s)=8\pi^6\frac{\Gamma_3(2s+2)}{\Gamma_3(s)\Gamma_3(s+4)} S_3(0_3,2s+4)\det(Y)^s\zeta_3^{(6)}(2Y,2s+2). \]
    At \(s=0\), the relevant Laurent expansions are
    \[ \frac{\Gamma_3(2s+2)}{\Gamma_3(s)\Gamma_3(s+4)}=\frac{s^2}{90\pi^2}+O(s^3), \]
    \[ S_3(0_3,2s+4)=\frac{1}{2s}\frac{\zeta(3)}{\zeta(4)\zeta(6)}+O(1), \]
    and
    \[ \zeta_3^{(6)}(2Y,2s+2)= -\frac{\Xi_3^{(6)}(2Y,2)}{6s\,\xi(4)\xi(3)\xi(2)}+O(1). \]
    Therefore
    \[ C_3(Y,0)=-A_3\Xi_3^{(6)}(2Y,2), \]
    where
    \[ A_3=\frac{\pi^6}{135}\frac{\zeta(3)}{\zeta(4)^2\zeta(6)\xi(3)\xi(2)}. \]

    For \(\nu=4\), formula \eqref{eq:Cnu-first-column} gives
    \[ C_4(Y,s)=16\pi^{10} \frac{\Gamma_4\left(2s+\frac32\right)}{\Gamma_4(s)\Gamma_4(s+4)} S_4(0_4,2s+4)\det(Y)^s\zeta_4^{(6)}\left(2Y,2s+\frac32\right). \]
    The corresponding Laurent expansions are
    \[ \frac{\Gamma_4\left(2s+\frac32\right)}{\Gamma_4(s)\Gamma_4(s+4)} =\frac{s}{180\pi^4}+O(s^2), \]
    \[ S_4(0_4,2s+4)=-\frac{1}{8s}\frac{\zeta(3)}{\zeta(4)^2\zeta(6)}+O(1), \]
    and
    \[ \zeta_4^{(6)}\left(2Y,2s+\frac32\right)= \frac{32}{9\xi(3)\xi(2)}\Xi_4^{(6)}\left(2Y,\frac32\right) +O(s). \]
    Thus
    \[ C_4(Y,0)=-A_4\Xi_4^{(6)}\left(2Y,\frac32\right), \]
    where
    \[ A_4=\frac{16\pi^6}{405}\frac{\zeta(3)}{\zeta(4)^2\zeta(6)\xi(3)\xi(2)} =\frac{16}{3}A_3. \]
    Applying \eqref{eq:degree-six} with \(S=2Y\), we conclude that
    \begin{align*}
        C_3(Y,0)+C_4(Y,0) & =-A_3\Xi_3^{(6)}(2Y,2) -A_4\Xi_4^{(6)}\left(2Y,\frac32\right) \\
                          & =-A_3\Xi_3^{(6)}(2Y,2)+A_3\Xi_3^{(6)}(2Y,2)                   \\
                          & =0.
    \end{align*}
\end{proof}

\bibliographystyle{amsplain}
\bibliography{small_weight}
\end{document}